\newtheorem{theorem}{Theorem}[section]
\newtheorem*{theorem-a}{Theorem A}
\newtheorem*{theorem-b}{Theorem B}
\newtheorem{lemma}[theorem]{Lemma}
\newtheorem{proposition}[theorem]{Proposition}
\newtheorem{corollary}[theorem]{Corollary}
\theoremstyle{remark}
\newtheorem{remark}{Remark}[section]
\theoremstyle{definition}
\numberwithin{equation}{section}
\title[Decay Properties for solutions to the S-KdV  system]{On long time behavior of solutions of the Schr\"odinger-KdV system with and  without resonant  interactions}
\author[Felipe Linares]{Felipe Linares\,$^{*}$} \thanks{$^{*}$Corresponding author. IMPA, Instituto Matematica Pura e Aplicada, Estrada Dona Castorina 110, 22460-320, Rio de Janeiro, RJ, Brazil, E-mail: {linares@impa.br}.}
\author[Deqin Zhou]{Deqin Zhou\,$^{\dag} $} \thanks{$^{\dag}$College of
Mathematics and Statistics, Chongqing University, Chongqing 401331,
China, E-mail: {deqinzhou1952@126.com}}
\keywords{Asymptotic behavior, Schr\"odinger equation, KdV equation}
\subjclass{35Q53, 35B35, 35B40, 35Q55}
\begin{document}

\maketitle


\begin{abstract}
We consider the long time behavior of the solutions of the coupled Schr\"odinger-KdV systems
\begin{eqnarray*}
\left\{
\begin{array}{llll}i\partial_tu+\partial^2_xu=\alpha uv+\beta u|u|^2,\hskip30pt (x,t)\in \mathbb{R}\times \mathbb{R}^{+},\\
\partial_tv+\partial^3_xv+v\partial_xv=\gamma \partial_x(|u|^2), \hskip20pt (x,t)\in \mathbb{R}\times \mathbb{R}^{+},\\
u, v)|_{t=0} =(u_{0}, v_{0}).
\end{array}
\right.
\end{eqnarray*}
We show that global solutions to this system satisfy locally energy decay in a suitable interval, growing
unbounded in time, in two situations. In the first case, we regard the parameter vector $(\alpha,\beta,\gamma)\in \mathbb{R}^{+}\times \overline{\mathbb{R}^{+}}\times \mathbb{R}^{+}$ without any size assumption on the initial data in $ H^{1}(\mathbb{R})\times H^{1}(\mathbb{R})$. In the second one, we consider the parameter vector
$(\alpha,\beta,\gamma)\in \mathbb{R}^{+}\times \mathbb{R}^{-}\times \mathbb{R}^{+}$. In this case, we give a \lq\lq smallness" criterion involving the product of the parameter $-\beta$ and  a constant depending on the initial data in $H^{1}(\mathbb{R})\times H^{1}(\mathbb{R})$. Our results answer positively the open
questions raised in [F. Linares, A. J. Mendez, SIAM J. Math. Anal. 53(2021) 3838-3855]. We use new ideas and
different techniques from the latter paper.
\end{abstract}



\section{Introduction}
\hspace*{\parindent} We consider the Schr\"odinger-Korteweg-de Vries (Schr\"odinger-KdV)  system
\begin{eqnarray}\label{Equ(0.1)}
\left\{
\begin{array}{llll}i\partial_tu+\partial^2_xu=\alpha uv+\beta u|u|^2, \hskip30pt (x,t)\in \mathbb{R}\times \mathbb{R}^{+},\\
\partial_tv+\partial^3_xv+v\partial_xv=\gamma \partial_x(|u|^2), \hskip20pt (x,t)\in \mathbb{R}\times \mathbb{R}^{+},\\
(u, v)|_{t=0} =(u_{0}, v_{0}),\\
\end{array}
\right.
\end{eqnarray}
where the given coefficients $\alpha,\beta,\gamma\in \mathbb{R}$, and the solution $(u(x,t),v(x,t))\in \mathbb{C}\times \mathbb{R}$.
The system \eqref{Equ(0.1)} models the interactions between short and long waves, such as capillary and gravity water waves, ion-acoustic and electron-plasma waves. Especially, the case $\beta=0$ models the resonant interactions and the case $\beta\neq 0$ models the non-resonant interactions. See \cite{JPSJ1975Kawahara,PRL1974Mima,JFA1998Ponce,DIE2010Wuyifei} for more details of the Schr\"odinger-Korteweg-de Vries  system \eqref{Equ(0.1)} in fluid mechanics and plasma physics.

\indent The well-posedness issues for the initial value problem (IVP) associated to the KdV equation, the Schr\"odingder equation, and the Schr\"odinger-KdV system have been extensively studied in the past three decades (see \cite{CPAM1993KPV,JAMS1996KPV,AM2019Visan,M2009Linares,M2006Tao} for the KdV equation,  \cite{AMS1999Bourgain,E1987Tsutsumi,M1999Sulem} for the Schr\"odinger equation, \cite{PAMS1997Ponce,TAMS2007Linares,JDE2010Guozihua,DIE2005Pecher,N2019Linares,MSA1993Tsutsumi,DIE2010Wuyifei} for the Schr\"odinger-KdV system, as well as the latest well-posedness result in \cite{Arxiv2024Linares} for the Schr\"odinger-KdV system).  We are mainly concerned with the long-time behavior of the Schr\"odinger-KdV system \eqref{Equ(0.1)} in this paper.

We notice that while the KdV equation and the cubic Schr\"odinger equation are completely integrable, the
Schr\"odinger-KdV system is not. Even though solutions of the later system enjoy the following invariants:

\begin{equation}\label{cons1}\mathcal{M}(t):=\int_{-\infty}^{+\infty}|u|^{2} d x=\mathcal{M}(0), \end{equation}
\begin{equation}\label{cons2}
\mathcal{Q}(t):=\int_{-\infty}^{+\infty}\left\{\alpha v^{2}+2 \gamma \operatorname{\Im}\left(u \overline{\partial_{x} u}\right)\right\} d x=\mathcal{Q}(0),
\end{equation}
\begin{equation}\label{cons3}
\mathcal{E}(t):=\int_{-\infty}^{+\infty}\left\{\alpha \gamma v|u|^{2}-\frac{\alpha}{6} v^{3}+\frac{\beta \gamma}{2}|u|^{4}+\frac{\alpha}{2}\left|\partial_{x} v\right|^{2}+\gamma\left|\partial_{x} u\right|^{2}\right\} d x=\mathcal{E}(0),
\end{equation}
where ${\Im}u$ stands for the imaginary part of $u$.

These quantities are the key tools to establish the global well-posedness results  above mentioned and they are
also useful in our arguments in this work. In addition, the global results cited above depend on the parameters
$\alpha, \gamma$ to be both positive or both negative. In the discussion below this observation is important.

\vspace{.5cm}

It is well known that both the Gardner equation
\begin{equation*}
\partial_tu+\partial^3_xu+\partial_x(u^2+\mu u^3)=0, \quad \mu\in \mathbb{R},\quad \mu \neq 0,
\end{equation*}
and the modified KdV equation
\begin{equation*}
\partial_tu+\partial^3_xu+\partial_xu^3=0
\end{equation*}
have breathers. However,  Mu\~ noz and Ponce \cite{CMP2019Ponce}  have  proved that there is a large class of the generalized KdV equations
\begin{equation}\label{Equ(0.5)}
\partial_tu+\partial^3_xu+\partial_x(f(u))=0, \quad f(u)=u^k+o(|u|^k),\quad k=2,3,\cdots,
\end{equation}
where $f(u)$ is a polynomial type, which does not have breathers under the smallness assumption
$\underset{t\in \mathbb{R}}{\sup}\|u(t)\|_{H^1(\mathbb{R})}<\epsilon$. Moreover, Mu\~noz and Ponce \cite{CMP2019Ponce} have proved that as time goes to infinity,  the  solutions to \eqref{Equ(0.5)} decay to zero locally in an interval of space depending on time. More precisely,
\begin{equation*}
\lim_{t\rightarrow \pm\infty}\int_{|x|\lesssim \frac{|t|^{\frac{1}{2}}}{\log|t|}}u^2+|\partial_xu|^2dx=0.
\end{equation*}

Later, Mendez-Mu\~ noz-Poblete-Pozo \cite{CPDE2021MMPP} extended the decay interval from $\{x:|x|\lesssim \frac{|t|^{\frac{1}{2}}}{\log |t|}\}$ to $\{x:|x|\lesssim |t|^b \}$  with $0< b<\frac{2}{3}$ in the sense of the limit inferior.  As far as we know, this is the largest local decay interval for the solution of a single real-valued KdV equation. It is still unknown whether there is local energy decay or increase in the interval $\{x:|x|\gtrsim |t|^b\}$.  Recently, Linares-Mendez \cite{SIAM2021Linares} have considered the  Schr\" odinger-Korteweg-de Vries system \eqref{Equ(0.1)}
and  proved rigorously \begin{equation}\label{SK}\lim_{t\rightarrow +\infty} \inf\int_{|x|\lesssim t^{p}}|u|^2+v^2+|\partial_xu|^2+(\partial_xv)^2dx=0,\quad 0<p<\frac{2}{3}, \quad (u_0,v_0)\in (H^1(\mathbb{R}))^2,\end{equation}  with the restriction that the parameters $\alpha$ and $\gamma$ in \eqref{Equ(0.1)}  are  negative.

\medskip

But when the parameters $\alpha$ and $\gamma$ are  positive,  Linares-Mendez's method does not applied. More precisely, there is not a fixed sign in the integrand (see (2.5) in \cite{SIAM2021Linares}) that
\begin{equation}\label{dif}
\int_{t\geq 2}\frac{1}{t\ln t}\int_{\mathbb{R}}(\frac{v^2}{2}-\gamma |u|^2)\frac{1}{\cosh (\frac{x(\ln t)^{q_1} }{t^{p_1}})}\frac{1}{\cosh (\frac{x(\ln t)^{q_1p_2}}{t^{p_1p_2}})}dxdt<\infty,
\end{equation}
with $p_1\in (0,\frac{2}{3})$, $q_1>0$ and $p_2>1$, if the parameter $\gamma$ is positive.  In \cite{SIAM2021Linares},
the authors raise the question: whether we still have local energy decay property when $\gamma$ and $\alpha$ are positive. One aim in this paper is to answer affirmatively this question by introducing new ideas and techniques.

\medskip


Before going on we briefly discuss the main motivation and the meaning of the quantities involved in our results.

Following the arguments introduced by Mu\~noz and Ponce in \cite{CMP2019Ponce} we can show that the Schr\" odinger-Korteweg-de Vries system
does not have periodic breathers.

In fact, consider the case that $\alpha$ and $\gamma$ are negative. Then multiplying the second equation in  \eqref{Equ(0.1)} by $x$ and integrating in space the result, we deduce that
\begin{equation}\label{momen-0}
\frac{d}{dt} \int_{\mathbb R} xv(x,t)\,dx= \int_{\mathbb R} \Big(\frac{v(x,t)^2}{2}-\gamma\,|u(x,t)|^2\Big)\,dx=\frac12\|v(t)\|^2-\gamma\|u_0\|^2,
\end{equation}
where \eqref{cons1} is employed. Integrating in $t$ leads to
\begin{equation}\label{momen-1}
B(t):=\int_{\mathbb R} xv(x,t)\,dx= \int_0^t\Big( \frac12\|v(t)\|^2-\gamma\|u_0\|^2 \Big) dt+c.
\end{equation}
If we suppose that the solution $(u,v)$ of \eqref{Equ(0.1)} is periodic of period $T$, then the function $B(t)$ cannot be periodic since the integrand is not negative in the integral on the right hand side of the equality.

\medskip

Now, we consider the case that $\alpha$ and $\gamma$ are positive.  We multiply the first equation in \eqref{Equ(0.1)} by $2\gamma x\overline{u}$ integrate in space and take the real part to deduce
\begin{equation}\label{momen-2}
\frac{d}{dt} \int_{\mathbb R} x|u(x,t)|^2\,dx= -2\int_{\mathbb R} \Im(u\,\overline{\partial_xu})(x,t)\,dx=-\frac{\alpha}{\gamma}\|v(t)\|^2+\frac{1}{\gamma}\mathcal{Q}(0),
\end{equation}
where $\mathcal{Q}(0)$ is as in \eqref{cons2}.

Next, we multiply the identity \eqref{momen-0} by $-\frac{2\alpha}{\gamma}$ and minus the identity \eqref{momen-2} to yield
\begin{equation}\label{momen-3}
\frac{d}{dt}\int_{\mathbb R} -\frac{2\alpha}{\gamma} x v(x,t)+ x|u(x,t)|^2\,dx= -\frac{1}{\gamma}\mathcal{Q}(0)-2\alpha \|u_0\|^2.
\end{equation}

Integrating in time the identity \eqref{momen-3} we find that
\begin{equation}\label{momen-4}
\begin{split}
F(t):=\int_{\mathbb R}  -\frac{2\alpha}{\gamma} xv(x,t)+x|u|^2(x,t)\,dx= -\Big(\frac{1}{\gamma}\mathcal{Q}(0)+2\alpha \|u_0\|^2\Big)t+c.
\end{split}
\end{equation}
Assuming that the solution $(u,v)$ of \eqref{Equ(0.1)} is periodic function with period $T$ and $\frac{1}{\gamma}\mathcal{Q}(0)+2\alpha \|u_0\|^2\neq 0$, then we  conclude that $F(t)$  cannot be periodic in time otherwise we are lead to a contradiction. This shows that the system \eqref{Equ(0.1)} has no breather solutions in both situations. \footnote{For the existence and simulations of bright and dark solitons, and other special solutions to the Schr\"odinger-KdV system \eqref{Equ(0.1)}, one may refer to \cite{OQE2021ASAO} and \cite{MS2024Kumar}.}

We can make sense of the quantities involved in the arguments above by applying the theory established in \cite{N2019Linares}. There, it was proved that
solutions of the Schr\"odinger-KdV system satisfy,
$(u,v)\in C([0,T]: H^{s+1/2, r_1}(\mathbb R) \times H^{s ,r_2}(\mathbb R))$, $s+1/2>r_1$, $s>2r_2$ with $s>3/4$,
and $r_1, r_2$ be positive real numbers.

 \vspace{3mm}

 Now, we present our main results in this work. We start with the following lower-order ``mixed" local energy decay criteria for the Schr\"odinger-KdV system \eqref{Equ(0.1)} with or without resonant interactions.

\begin{theorem}\label{th1} Let $\alpha$, $\gamma\in \mathbb{R}^{+}$ and $\beta\in \mathbb{R}$. Assume that $(u,v)$ is the solution to the Schr\"odinger-KdV system \eqref{Equ(0.1)} with $(u_0,v_0)\in (H^1(\mathbb{R}))^2$ satisfying
\begin{equation}\label{sm1}
(u,v)\in (C(\mathbb{R}^{+};H^1(\mathbb{R}))^2.
\end{equation}
Then
\begin{equation}\label{decay1}
\begin{split}
\lim _{t\rightarrow \infty} \inf \int_{| x | \lesssim t^{p}} |\frac{v^2}{2}-\gamma|u|^2| d x=\lim _{t\rightarrow \infty} \inf \int_{| x | \lesssim t^{p}} |u(\alpha v+\beta|u|^2)| d x=0, \hskip10pt \forall p\in (0,\frac{2}{3}).
\end{split}
\end{equation}
\end{theorem}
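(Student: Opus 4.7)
The plan is to extend the virial machinery of Muñoz--Ponce \cite{CMP2019Ponce} and Linares--Méndez \cite{SIAM2021Linares} so as to handle the case $\gamma>0$, in which the integrand $\tfrac{v^2}{2}-\gamma|u|^2$ loses its definite sign. The starting observation is that the unlocalised momentum identity \eqref{momen-0} already isolates \emph{exactly} the first quantity appearing in the statement: $\frac{d}{dt}\int xv\,dx = \tfrac12\|v\|^2-\gamma\|u_0\|^2$. A suitable weighted version of this identity should therefore carry the integrand $\tfrac{v^2}{2}-\gamma|u|^2$ as a bulk term, and the task is to produce its modulus.

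First I would fix $p\in(0,2/3)$, pick $p_1\in(p,2/3)$, $q_1>0$ and $p_2>1$ as in \cite{SIAM2021Linares}, set $\lambda(t)=t^{p_1}/(\log t)^{q_1}$, and introduce a double-scale weight of the form $\phi(x,t)=\tanh(x/\lambda(t))\operatorname{sech}(x/\lambda(t)^{p_2})$, whose monotone part captures the localisation $|x|\lesssim t^{p}$ while the $\operatorname{sech}$ factor makes $\phi(\cdot,t)\in L^2_x$. Differentiating $\mathcal{I}(t):=\int \phi(x,t)v(x,t)\,dx$ in time and using the KdV equation together with several integrations by parts yields a schematic identity
\begin{equation*}
\frac{d}{dt}\mathcal I(t)=\int \phi_x(x,t)\Bigl(\tfrac{v^2}{2}-\gamma|u|^2\Bigr)\,dx+R_1(t),
\end{equation*}
where $R_1$ groups the $\phi_t$, $\phi_{xxx}$ and cross-derivative remainders. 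The conservation laws \eqref{cons1}--\eqref{cons3}, combined with the uniform $H^1$ bound \eqref{sm1} and the explicit decay of $\phi$, keep $\mathcal I(t)$ uniformly bounded and make $R_1$ integrable against the natural weight $1/(t\log t)$; this is analogous to the manipulations leading to \eqref{dif}.

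The decisive point, which is where the present work must depart from \cite{SIAM2021Linares}, is the passage from the resulting signed integrability
\begin{equation*}
\int_2^{\infty}\frac{1}{t\log t}\int \phi_x(x,t)\Bigl(\tfrac{v^2}{2}-\gamma|u|^2\Bigr)\,dx\,dt<+\infty
\end{equation*}
to the corresponding estimate with $\lvert\tfrac{v^2}{2}-\gamma|u|^2\rvert$ in the integrand. My plan is to set up \emph{in parallel} two complementary virials: one with a monotone weight $\phi$ oriented as above (controlling the positive part of $\tfrac{v^2}{2}-\gamma|u|^2$ on $\{x\gtrsim 0\}$), and a second symmetric weight paired with a localised mass functional $\int \operatorname{sech}^2(x/\lambda)|u|^2\,dx$ driven by the Schrödinger equation. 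Adding the two identities with appropriate multiplicative constants produces a lower bound on $\int \phi_x\,\lvert\tfrac{v^2}{2}-\gamma|u|^2\rvert\,dx$ modulo terms that are either integrable or absorbed by the conserved quantity $\mathcal Q$. The same linear combination, reorganised via the Schrödinger equation written as $i\partial_t u+\partial_x^2 u=u(\alpha v+\beta|u|^2)$ and multiplied by $\chi\bar u$, simultaneously yields the corresponding estimate for the second quantity $\lvert u(\alpha v+\beta|u|^2)\rvert$; here Cauchy--Schwarz together with the Gagliardo--Nirenberg inequality are used to re-absorb the cross terms into the $H^1$ bound.

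The main obstacle, as the authors themselves identify in their discussion of \eqref{dif}, is precisely the sign indeterminacy handled in the third step above: extracting the modulus from a signed integrability is not automatic and requires pairing two virials instead of one. Once this combined estimate is in hand, the proof concludes by a standard argument: the finiteness of $\int_2^\infty \tfrac{1}{t\log t}\bigl(\int_{|x|\lesssim t^p}\lvert\tfrac{v^2}{2}-\gamma|u|^2\rvert+\lvert u(\alpha v+\beta|u|^2)\rvert\,dx\bigr)dt$, together with the divergence of $\int_2^\infty \tfrac{dt}{t\log t}$, forces both liminfs in \eqref{decay1} to vanish.
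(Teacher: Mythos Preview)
Your proposal has a genuine gap at the decisive step, and the paper's argument is structurally different from what you sketch.

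The identity $\frac{d}{dt}\mathcal I(t)=\int\phi_x\bigl(\tfrac{v^2}{2}-\gamma|u|^2\bigr)dx+R_1(t)$ is correct and is indeed the starting point. But your scheme for passing to the modulus does not work as written. A monotone weight $\phi$ does not ``control the positive part of $\tfrac{v^2}{2}-\gamma|u|^2$ on $\{x\gtrsim0\}$''; it simply gives the signed integral $\int\phi_x\bigl(\tfrac{v^2}{2}-\gamma|u|^2\bigr)dx$ with $\phi_x\ge0$, which is exactly the quantity whose sign you cannot fix. Pairing with the localised Schr\"odinger mass $\int\operatorname{sech}^2(x/\lambda)|u|^2\,dx$ does not help either: differentiating that functional produces $\int\partial_x(\operatorname{sech}^2)\,\Im(u\overline{u_x})\,dx$, a momentum-type term, not anything of the form $c_1 v^2+c_2|u|^2$ that could combine linearly with $\tfrac{v^2}{2}-\gamma|u|^2$ to yield a nonnegative density. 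No choice of constants in such a linear combination recovers $\bigl|\tfrac{v^2}{2}-\gamma|u|^2\bigr|$.

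What the paper does instead is to split space (for each fixed $t$) into $E_{+,t}=\{x:\tfrac{v^2}{2}-\gamma|u|^2>0\}$ and $E_{-,t}$, and multiply the KdV equation directly by $\chi_{E_{+,t}}(x)\,\eta(t)^{-1}w(x/\lambda_1)g(x/\lambda_2)$ before integrating. The point is that in the integration by parts the boundary contributions vanish: either because $\chi_{E_{+,t}}=0$ there, or because on $\partial E_{+,t}$ the quantity $\tfrac{v^2}{2}-\gamma|u|^2$ itself vanishes. This yields a bound on $\int_2^\infty\tfrac1t\int_{E_{+,t}}(\tfrac{v^2}{2}-\gamma|u|^2)\,w'g\,dx\,dt$ by $C(\|u_0\|_{H^1},\|v_0\|_{H^1})$; repeating with $\chi_{E_{-,t}}$ and adding gives the modulus. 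Since $\chi_{E_{\pm,t}}$ is rough, the argument is carried out first for regularised solutions $(u_n,v_n)\in C(\R^+;H^4)$ obtained by mollifying the data, and the estimate is passed to the limit using continuity of the data-to-solution map. The second limit in \eqref{decay1} is obtained by the same mechanism applied to the real and imaginary parts of the Schr\"odinger equation, which isolate $\Re u\,(\alpha v+\beta|u|^2)$ and $\Im u\,(\alpha v+\beta|u|^2)$ respectively.
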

\vspace{0.5cm}
As discussed before, we cannot directly determine the sign of $\frac{v^2}{2}-\gamma|u|^2$ in \eqref{dif} if $\gamma$ is positive. A natural idea to deal with this obstruction is to analyze the sign of the integrand according to $\frac{v^2}{2}-\gamma|u|^2>0$ and $\frac{v^2}{2}-\gamma|u|^2<0$.
More precisely, after introducing
\begin{equation*}
E_{1,t}=\{x:\frac{v^2(x,t)}{2}-\gamma|u|^2(x,t)>0\},\]\[E_{2,t}=\{x:\frac{v^2(x,t)}{2}-\gamma|u|^2(x,t)<0\},
\end{equation*}
one  constructs the functional
\begin{equation*}
J_{1}(t)=\frac{1}{\eta(t)} \int_{E_{1,t}\cup E_{2,t}} v\left(\chi_{E_{1,t}}(x)-\chi_{E_{2,t}}(x)\right) w\left(\frac{x}{\lambda_{1}(t)}\right)g\left(\frac{x}{\lambda_{2}(t)}\right) d x,
\end{equation*}
where $w(x)$, $g(x)$, $\lambda_1(t)$, $\lambda_2(t)$ and $\eta(t)$ are the same as in \cite{SIAM2021Linares}, i.e.,
\begin{equation*}
w(x)=\arctan (e^{x}),\hskip15pt g(x)=\frac{1}{2}{\rm sech}(x)=\frac{1}{e^{x}+e^{-x}},
\end{equation*}
\begin{equation*}
\lambda_{1}(t)=t^{p_{1}},\hskip10pt \lambda_{2}(t)=t^{p_1p_{2}}, \hskip10pt \eta(t)=t^{r_{1}},
\end{equation*}
(see \eqref{weight1}-\eqref{Equ2.4} for more details), and following the ideas and techniques in \cite{SIAM2021Linares}, one expects to get an identity of
$\frac{d}{dt}J_1(t)$ in the form
\begin{equation}\label{B1}
\begin{split}
\frac{1}{t} &\int_{\mathbb {R}} |\frac{v^2}{2}-\gamma|u|^2| w'\left(\frac{x}{\lambda_{1}(t)}\right)g\left(\frac{x}{\lambda_{2}(t)}\right) d x= \frac{d}{d t} J_{1}(t)+J_{1,int}(t),\quad\quad\forall \gamma\in \mathbb{R},\\
\end{split}
\end{equation} where $J_{1,int}(t)\in L^{1}(2,+\infty)$. Then integrating over $(2,+\infty)$ with respect to the time variable on both sides of the identity \eqref{B1}  and using the facts $\frac{1}{t}\notin L^{1}(2,+\infty)$ as well
as $\underset{t\in [1,+\infty)}{\sup}|J_1(t)|<\infty$, one shall obtain
\begin{equation*}
\lim _{t\rightarrow \infty} \inf \int_{| x | \lesssim t^{p}} |\frac{v^2}{2}-\gamma|u|^2| d x=0.
\end{equation*}
However, it might happen that $J_1(t)$ is not continuous in time which would imply that \eqref{B1} does not hold.

\medskip

To prove Theorem \ref{th1}, we introduce a new idea and techniques different from those in \cite{SIAM2021Linares}. We start mollifying the initial data $(u_0,v_0)\in (H^1(\mathbb{R}))^2$ through a smooth sequence $(u_{n0},v_{n0})\in (H^4(\mathbb{R}))^2$.  The well-posedness theory in \cite{TAMS2007Linares}  guarantees that the IVP associated to the Schr\"odinger-KdV system \eqref{Equ(0.1)} with  initial data $(u_{n0},v_{n0})\in (H^4(\mathbb{R}))^2$ and $\alpha\gamma>0$ admits a unique classical solution $(u_n,v_n)\in(C(\mathbb{R}^{+};H^4(\mathbb{R}))^2$. Then without constructing any functional but only using the coupled KdV structure from the  Schr\"odinger-KdV system as well as some properties of the weight functions $w(x)$ and $g(x)$, we successfully prove that

\begin{equation}\label{11step1res}
\int^{\infty}_2\int_{\mathbb{R}}\frac{1}{t}|\frac{1}{2}v^2_n-\gamma |u_n|^2 |w'(\frac{x}{\lambda_1(t)})g(\frac{x}{\lambda_2(t)})dxdt\lesssim C(\|u_{n0}\|_{H^1(\mathbb{R})},\|v_{n0}\|_{H^1(\mathbb{R})}).
\end{equation}

 Next, using the regularized solutions (see \eqref{s-13}-\eqref{s-15} for details)
 \begin{equation*}
 (u_{n},v_{n})\rightarrow (u,v) \text{ in } (C(0,+\infty; H^1(\mathbb{R})))^2 \quad\text{ as } \quad n\rightarrow +\infty,
 \end{equation*}
 \begin{equation*}
 \int^{\infty}_{2}\int_{\mathbb{R}}|v^2_n-v^2|\frac{w'(\frac{x}{\lambda_1(t)})g(\frac{x}{\lambda_2(t)})}{t}\,dxdt\rightarrow0 \quad\text{ as }\quad n\rightarrow +\infty,
 \end{equation*}
 \begin{equation*}
 \int^{\infty}_{2}\int_{\mathbb{R}}||u_n|^2-|u|^2|\frac{w'(\frac{x}{\lambda_1(t)})g(\frac{x}{\lambda_2(t)})}{t}\,dxdt\rightarrow0 \quad\text{ as }\quad n\rightarrow +\infty,
 \end{equation*}
 we have
 \begin{equation}\label{11result}
 \begin{split}
 &\int^{+\infty}_2\frac{1}{t}\int_{\mathbb{R}}|\frac{v^2}{2}-\gamma|u|^2|w'(\frac{x}{\lambda_1(t)})
 g(\frac{x}{\lambda_2(t)})\,dxdt<C(\|u_{0}\|_{H^1(\mathbb{R})},\|v_{0}\|_{H^1(\mathbb{R})}).
 \end{split}
\end{equation}
Finally, applying the fact $\frac{1}{t}\notin L^{1}(2,+\infty)$ and the properties of the weight functions $w(x)$ and $g(x)$, we obtain
\begin{equation*}
\lim _{t\rightarrow \infty} \inf \int_{| x | \lesssim t^{p}} |\frac{v^2}{2}-\gamma|u|^2| \,dx=0.
\end{equation*}
Utilizing similar techniques and the real and imaginary parts structure of the  coupled Schr\"odinger equation from the Schr\"odinger-KdV system, we also have
\begin{equation*}
\lim _{t\rightarrow \infty} \inf \int_{| x | \lesssim t^{p}} |u(\alpha v+\beta|u|^2)|\, dx=0.
\end{equation*}

\begin{remark}
We notice that we do not need the smallness assumption as that for the generalized KdV equations \cite{CMP2019Ponce}
because of three conserved laws for the Schr\"odinger-KdV system \eqref{Equ(0.1)}
with $\alpha\gamma>0$ and the initial data $(u_0,v_0)\in (H^1(\mathbb{R}))^2$, which was first observed in \cite{TAMS2007Linares}.
\end{remark}

Observing that the local energy decay criterion in Theorem \ref{th1} is lower-order and ``mixed", the second goal in this paper is to
get the first-order and ``unmixed'' local energy decay criterion for the Schr\"odinger-KdV system \eqref{Equ(0.1)} with the resonant
interaction  if $\beta=0$ and with no-resonant interactions if $\beta>0$.

\medskip

\begin{theorem}\label{th2} \textbf{(non-resonant interactions and without smallness assumption)} Let $\alpha$, $\gamma\in \mathbb{R}^{+}$ and $\beta> 0$. Assume that $(u,v)$ is the solution to the Schr\"odinger-KdV system \eqref{Equ(0.1)} with $(u_0,v_0)\in (H^1(\mathbb{R}))^2$ such that $(u,v)\in (C(\mathbb{R}^{+};H^1(\mathbb{R})))^2$. Then
\begin{equation}\label{decay2}
\begin{split}
\lim _{t\rightarrow \infty} \inf \int_{| x | \lesssim t^{p}} (\partial_xv)^2 d x=\lim _{t\rightarrow \infty} \inf \int_{| x | \lesssim t^{p}} |\partial_xu|^2 d x=0,\quad\quad \forall p\in (0,\frac{2}{3}).
\end{split}
\end{equation} Moreover, for any $k\in (2,\infty)$, we have
\begin{equation}\label{decay3}
\begin{split}\lim _{t\rightarrow \infty} \inf \int_{| x | \lesssim t^{p}} |v|^{k} d x=\lim _{t\rightarrow \infty} \inf \int_{| x | \lesssim t^{p}} |u|^{k} d x=0.\end{split}
\end{equation}
\end{theorem}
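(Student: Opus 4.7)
The plan is a single weighted Kato-Morawetz identity for the coupled system, designed so that the Schr\"odinger-KdV cross-term cancels exactly, in the same way it does in the conservation law \eqref{cons2}. As in Theorem \ref{th1}, first regularize $(u_0,v_0)\in(H^1(\mathbb R))^2$ by mollification to $(u_{n0},v_{n0})\in(H^4(\mathbb R))^2$, producing classical solutions $(u_n,v_n)\in C(\mathbb R^+;H^4(\mathbb R))^2$ via \cite{TAMS2007Linares}; derive the estimate at this regular level and pass to the limit. Reuse the weights $w,g$ and the scales $\lambda_1(t)=t^{p_1},\lambda_2(t)=t^{p_1 p_2},\eta(t)=t^{r_1}$ of Theorem \ref{th1}, now choosing $r_1+p_1=1$ and $p_1\in(0,2/3)$, and set $\phi(x,t):=w(x/\lambda_1(t))\,g(x/\lambda_2(t))$. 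Consider the master functional
\[
J(t):=\frac{1}{\eta(t)}\int_{\mathbb R}\Big[\tfrac{\alpha}{\gamma}v_n^2+2\operatorname{Im}(u_n\overline{\partial_x u_n})\Big]\phi(x,t)\,dx,
\]
a weighted analogue of $\mathcal{Q}/\gamma$. The conservation laws \eqref{cons1}--\eqref{cons3} together with $\alpha,\beta,\gamma>0$ yield a uniform-in-$(n,t)$ bound $\|(u_n,v_n)\|_{H^1}\le C$: the positive term $\tfrac{\beta\gamma}{2}\|u\|_{L^4}^4$ in $\mathcal{E}$ dominates the indefinite $\alpha\gamma\!\int\! v|u|^2$ via Young's inequality, and the cubic $\int v^3$ is absorbed by Gagliardo-Nirenberg and conservation of $\|u\|_{L^2}$; consequently $|J(t)|\lesssim 1/\eta(t)\to 0$ as $t\to\infty$.

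Differentiate $J$. Multiplying the KdV equation by $\tfrac{2\alpha}{\gamma}v_n\phi$ and integrating by parts yields the Kato-smoothing term $-\tfrac{3\alpha}{\gamma}\int(\partial_x v_n)^2\partial_x\phi\,dx$ together with a coupling $-2\alpha\int|u_n|^2\phi\,\partial_x v_n\,dx$. A direct computation from $i\partial_t u_n=-\partial_x^2 u_n+\alpha u_n v_n+\beta u_n|u_n|^2$ produces the Schr\"odinger-momentum identity
\[
\partial_t\!\big[2\operatorname{Im}(u_n\overline{\partial_x u_n})\big]=4\partial_x|\partial_x u_n|^2-\partial_x^3|u_n|^2+2\alpha|u_n|^2\partial_x v_n+\beta\,\partial_x|u_n|^4.
\]
Pairing this with $\phi$ and integrating by parts produces the smoothing term $-4\int|\partial_x u_n|^2\partial_x\phi\,dx$, the coupling $+2\alpha\int|u_n|^2\phi\,\partial_x v_n\,dx$ --- which cancels its KdV counterpart exactly --- and $-\beta\int|u_n|^4\partial_x\phi\,dx$. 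Combining gives
\[
\frac{dJ}{dt}+\frac{1}{\eta(t)}\!\int_{\mathbb R}\!\Big[\tfrac{3\alpha}{\gamma}(\partial_x v_n)^2+4|\partial_x u_n|^2+\beta|u_n|^4\Big]\partial_x\phi\,dx=R_n(t),
\]
where $R_n$ collects the time derivatives of $1/\eta$ and $\phi$ applied to the densities of $J$, together with the lower-order terms $\int v_n^2\partial_x^3\phi$, $\int|u_n|^2\partial_x^3\phi$, $\int v_n^3\partial_x\phi$, and $\int v_n|u_n|^2\partial_x\phi$.

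Since $\partial_x\phi\ge 0$ and $\beta>0$, all three bracketed terms on the left are nonnegative. The control $R_n\in L^1(2,\infty)$ uniformly in $n$ requires the usual bookkeeping from \cite{SIAM2021Linares}, exploiting $|\partial_x^j\phi|\lesssim \lambda_1(t)^{-j}$ and the extra spatial decay provided by $g(x/\lambda_2)$ for $|x|\gg\lambda_1$; the cubic term $\int v_n^3\partial_x\phi$ is the most delicate and is handled by interpolating with the already-obtained smoothing $\int(\partial_x v_n)^2\partial_x\phi$ via a localized Gagliardo-Nirenberg. Integrating on $[2,\infty)$ and using $\eta(t)\lambda_1(t)=t$ yields
\[
\int_2^{\infty}\frac{1}{t}\!\int_{\mathbb R}\big[(\partial_x v_n)^2+|\partial_x u_n|^2\big]\,w'\!\big(\tfrac{x}{\lambda_1}\big)g\!\big(\tfrac{x}{\lambda_2}\big)\,dx\,dt\le C(\|u_0\|_{H^1},\|v_0\|_{H^1}).
\]
The strong $C_tH^1_x$ convergence $(u_n,v_n)\to(u,v)$ combined with Fatou's lemma transfers this bound to the limit $(u,v)$; since $1/t\notin L^1(2,\infty)$ while $w'(x/\lambda_1)g(x/\lambda_2)$ is bounded below by a positive constant on $|x|\lesssim t^{p_1}$, the inner spatial integral must have $\liminf=0$, proving \eqref{decay2} for every $p=p_1\in(0,2/3)$.

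For \eqref{decay3}, choose $t_n\to\infty$ realizing the $\liminf$ in \eqref{decay2}. Take a smooth cutoff $\chi_n\equiv 1$ on $|x|\le t_n^p$, $\operatorname{supp}\chi_n\subset\{|x|\le 2t_n^p\}$, and $\|\chi_n'\|_\infty\le Ct_n^{-p}$; the compactly supported function $f_n:=u(t_n)\chi_n\in H^1$ then satisfies $\|\partial_x f_n\|_{L^2}\le\|\partial_x u(t_n)\|_{L^2(|x|\le 2t_n^p)}+Ct_n^{-p}\|u_0\|_{L^2}\to 0$ and $\|f_n\|_{L^2}\le\|u_0\|_{L^2}$, so the 1D Gagliardo-Nirenberg inequality $\|f\|_{L^\infty}\le\sqrt{2}\,\|f\|_{L^2}^{1/2}\|\partial_x f\|_{L^2}^{1/2}$ gives $\|f_n\|_{L^\infty}\to 0$, whence $\|u(t_n)\|_{L^k(|x|\le t_n^p)}^k\le\|f_n\|_{L^\infty}^{k-2}\|f_n\|_{L^2}^2\to 0$; the argument for $v$ is identical. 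The main obstacle in the scheme is deriving the Schr\"odinger-momentum identity and verifying that its $2\alpha|u|^2\partial_x v$ coupling matches the $-2\alpha|u|^2\phi\,\partial_x v$ from the KdV multiplier precisely --- without this cancellation, the cross-coupling could only be absorbed by Cauchy-Schwarz at the cost of eating into the positive smoothing and destroying the bound; the sign $\beta>0$ plays its role only at the end, converting the residual $-\beta\int|u|^4\partial_x\phi$ into a favorable term that can be dropped.
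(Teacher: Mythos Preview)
Your master functional $J$ and the cancellation of the cross-coupling $\int|u|^2\partial_x v\,\phi$ are exactly the mechanism the paper uses (its Propositions~\ref{prop2} and~\ref{prop3} combined with the choice $\theta_3=2\theta_2\gamma/\alpha$). However, two points need correction.

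First, a minor one: the claim $\partial_x\phi\ge 0$ is false. With $\phi=w(x/\lambda_1)g(x/\lambda_2)$ one has $\partial_x\phi=\lambda_1^{-1}w'g+\lambda_2^{-1}wg'$, and for $x\gg\lambda_1$ the second summand (negative for $x>0$) dominates. This is easily repaired by splitting: keep the genuinely nonnegative $\lambda_1^{-1}w'(x/\lambda_1)g(x/\lambda_2)$ as the smoothing weight and relegate every $\lambda_2^{-1}wg'$ contribution to the remainder $R_n$, where it is harmless since $(\eta\lambda_2)^{-1}=t^{-r_1-p_1p_2}\in L^1(2,\infty)$.

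Second, and this is the real gap: the cubic terms $\int v_n^3\,\partial_x\phi$ and $\int v_n|u_n|^2\,\partial_x\phi$ cannot be placed in $L^1(2,\infty)$ by a localized Gagliardo--Nirenberg absorption into the smoothing. Any such interpolation inequality, e.g.\ $\int|v|^3\psi\lesssim(\int(\partial_x v)^2\psi)^{1/4}(\int v^2\psi)^{5/4}$ with $\psi=w'(\cdot/\lambda_1)g(\cdot/\lambda_2)$, leaves a factor $(\int v^2\psi)^{5/4}$ (or $\int v^2\psi$ after Young) whose time integral against $1/t$ diverges; and $\int v^2\psi\,dt/t<\infty$ is exactly the estimate that \emph{fails} for $\gamma>0$ --- see \eqref{dif} in the introduction. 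The paper closes this gap by feeding in the already-proved integral bounds behind Theorem~\ref{th1}, namely Proposition~\ref{prop1} and Corollary~\ref{prop4}, through the algebraic identity
\[
\frac{v^3}{3}-\gamma|u|^2v=\frac{2v}{3}\Big(\frac{v^2}{2}-\gamma|u|^2\Big)-\frac{\gamma|u|^2}{3\alpha}(\alpha v+\beta|u|^2)+\frac{\gamma\beta|u|^4}{3\alpha},
\]
so that the first two pieces are controlled by those propositions (using $\|v\|_{L^\infty_{t,x}},\|u\|_{L^\infty_{t,x}}\lesssim 1$) and the last $|u|^4$ piece combines with the Schr\"odinger-side contribution $-\tfrac{\beta\theta_3}{2t}\int|u|^4w'g$. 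It is precisely here, after this recombination, that $\beta\ge 0$ is used to make the net $|u|^4$ coefficient favorable. Your sketch skips this input; without Proposition~\ref{prop1} the derivative-level identity does not close.

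Your argument for \eqref{decay3} via a cutoff and $\|f_n\|_{L^\infty}\to 0$ is correct and more direct than the paper's iterative scheme; note only that the common subsequence for $\partial_x u$ and $\partial_x v$ is guaranteed because both are controlled by the single integral $\int_2^\infty t^{-1}\int[(\partial_x v)^2+|\partial_x u|^2]w'g\,dx\,dt<\infty$.
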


\medskip

\begin{remark} \label{rem1} From the proof of \eqref{decay2} in Section 4, we observe that \eqref{decay2} still holds  under the same assumptions in Theorem \ref{th2} but with the restriction $\beta=0$.
\end{remark}

To show Theorem \ref{th2}, we use different techniques from those in \cite{SIAM2021Linares}. The authors in \cite{SIAM2021Linares}  directly prove \eqref{decay2} by introducing the following functional with  one weight function (see (4.1) in \cite{SIAM2021Linares})
\begin{equation*}
\frac{\theta}{2\eta(t)}\int_{\mathbb{R}}v^2\psi_{l}(\frac{x}{\lambda(t)})dx
+\frac{\mu}{\eta(t)}{\Im}\int_{\mathbb{R}}u\overline{\partial_xu}\psi_{l}(\frac{x}{\lambda(t)})dx,
\end{equation*}
where $\theta$, $\mu$, $l\in \mathbb{R}\setminus\{0\}$, $\psi_l(x)=\frac{2l}{\pi}\arctan(e^{\frac{x}{l}})$, $\lambda(t)=\frac{t^{p_1}}{\ln^{q_1}(t)}$, $\Im $ stands for the imaginary part of a complex valued function, and by using the  boundedness of the weighted energy, that is,
\begin{equation}\label{decay4}
\int^{\infty}_1\frac{1}{t\ln t}\int_{\mathbb{R}}(v^2+|u|^2)\psi^{'}_{l}(\frac{x}{t^{p_1}})dxdt<\infty.
\end{equation}
But the key energy inequality \eqref{decay4} is missing if $\alpha,\gamma\in \mathbb{R}^{+}$.

\medskip

We introduce the following well-defined  functionals with two weight functions
\begin{equation*}
J_2(t)=\frac{\theta_2}{\eta(t)}\int_{\mathbb{R}}v^2w(\frac{x}{\lambda_1(t)})g(\frac{x}{\lambda_2(t)})dx,\quad \quad \theta_2\in \mathbb{R}^{+},
\end{equation*}
(see Proposition \ref{prop2} below) and
\begin{equation*}
 J_3(t)=\frac{\theta_3}{\eta(t)}\int_{\mathbb{R}}{\Im}[u(x,t)\partial_x\overline{u}(x,t)]w(\frac{x}{\lambda_1(t)})g(\frac{x}{\lambda_2(t)})dx,\quad \theta_3\in \mathbb{R}^{+},
\end{equation*}
(see Proposition \ref{prop3} below) and construct the crucial identity
 \begin{equation}\label{cru1}
 \begin{split}
&\frac{3\theta_2}{t} \int_{\mathbb {R}}|\partial_xv|^2 w'(\frac{x}{\lambda_1(t)})g(\frac{x}{\lambda_2(t)}) d x+\frac{2\theta_3}{t} \int_{\mathbb {R}}|\partial_xu|^2 w'(\frac{x}{\lambda_1(t)})g(\frac{x}{\lambda_2(t)}) d x \\
=&-\frac{d}{dt}(J_2(t)+J_3(t))+J_{2,int}(t)+J_{3,int}(t)+\frac{2\theta_2}{t}\int_{\mathbb{R}}(\frac{v^3}{3}-\gamma |u|^2v)w'(\frac{x}{\lambda_1(t)})g(\frac{x}{\lambda_2(t)})dx\\
&-\frac{\beta\theta_3}{2t}\int_{\mathbb{R}}|u|^4w'(\frac{x}{\lambda_1(t)})g(\frac{x}{\lambda_2(t)})dx+(-\frac{2\theta_2\gamma}{\eta(t)}+\frac{\theta_3\alpha}{\eta(t)})\int_{\mathbb{R}} |u|^2\partial_xvw(\frac{x}{\lambda_1(t)})g(\frac{x}{\lambda_2(t)})dx,
\end{split}
\end{equation}
where $J_{2,int}(t)$, $J_{3,int}(t)\in L^1(2,+\infty)$ and $\beta\in \mathbb{R}$.

Then choosing
\begin{equation*}
\theta_2>0,\quad \theta_3=\frac{2\theta_2\gamma}{\alpha},\quad \alpha>0,\quad \gamma>0,\quad \beta\geq0,
\end{equation*}
and using
\begin{equation*}
\int^{+\infty}_2\frac{1}{t}\int_{\mathbb{R}}|\frac{v^2}{2}-\gamma|u|^2|w'(\frac{x}{\lambda_1(t)})g(\frac{x}{\lambda_2(t)})dxdt<C(\|u_{0}\|_{H^1(\mathbb{R})},\|v_{0}\|_{H^1(\mathbb{R})}),
\end{equation*}
(see Proposition \ref{prop1} below), and
\begin{equation*}
\int^{+\infty}_2\frac{1}{t} \int_{\mathbb {R}}| u(\alpha v+\beta |u|^2)| w'\left(\frac{x}{\lambda_{1}(t)}\right)g\left(\frac{x}{\lambda_{2}(t)}\right) d xdt\leq C(\|u_0\|_{H^1(\mathbb{R})},\|v_0\|_{H^1(\mathbb{R})}),
\end{equation*}
see Corollary \ref{prop4})\footnote{Proposition \ref{prop1} and Corollary \ref{prop4} will be used to deal with the  term $\frac{2\theta_2}{t}\int_{\mathbb{R}}(\frac{v^3}{3}-\gamma |u|^2v)w'(\frac{x}{\lambda_1(t)})g(\frac{x}{\lambda_2(t)})dx$ in \eqref{cru1}.}
the crucial identity \eqref{cru1} implies  that

\begin{equation}\label{cru2}
\begin{split}
&\int^{+\infty}_{2}\frac{1}{t} \int_{\mathbb {R}}\Big(3|\partial_xv|^2+\frac{4\gamma}{\alpha}|\partial_xu|^2\Big) w'(\frac{x}{\lambda_1(t)})g(\frac{x}{\lambda_2(t)}) dxdt \\
&\hskip35pt+\frac{\beta\gamma}{3\alpha}\int^{+\infty}_{2}\frac{1}{t}\int_{\mathbb{R}}|u|^4w'(\frac{x}{\lambda_1(t)})g(\frac{x}{\lambda_2(t)})dxdt<+\infty.
\end{split}
\end{equation}
After using the properties of the weight functions $w(x)$ and $g(x)$, we immediately get \eqref{decay2} from \eqref{cru2}.

With \eqref{cru2} at hand, the restriction $\beta>0$ and the key observation that
\begin{equation*}
\int^{\infty}_{2}\frac{1}{t}\int_{\mathbb{R}}|u|^{2+m} w'(\frac{x}{\lambda_1(t)})g(\frac{x}{\lambda_2(t)})\, dxdt <\infty\hskip5pt \Leftrightarrow \hskip5pt \int^{\infty}_{2}\frac{1}{t}\int_{\mathbb{R}}|v|^{2+m} w'(\frac{x}{\lambda_1(t)})g(\frac{x}{\lambda_2(t)})\, dxdt <\infty,
\end{equation*}
 for any given $m\in \mathbb{R}^{+}$, we can prove \eqref{decay3}.

\medskip

We observe that in Theorem \ref{th2} we require $\beta\in \mathbb{R}^{+}$ and in the
Remark \ref{rem1} we must have $\beta=0$. Our next result considers
the case $\beta\in \mathbb{R}^{-}$. First, we will describe a function depending on the initial data
$(\|u_0\|_{H^1}, \|u_0\|_{H^1})$ which is crucial in our analysis.
\begin{equation}\label{exp1-a}
\Phi \left(\left\|u_{0}\right\|_{H^{1}},\left\|v_{0}\right\|_{H^{1}}\right):=C^{\frac{1}{2}}_{\alpha,\beta,\gamma}
(\|u_0\|_{H^1}+\|v_0\|_{H^1}+\|u_0\|^{5}_{H^1}+\|v_0\|^{5}_{H^1}),
\end{equation}
where
\begin{equation}\label{exp1-b}
\begin{split}
C_{\alpha,\beta,\gamma}= &2(1+\frac{|\gamma|}{|\alpha|}+\frac{4\gamma^2}{|\alpha|^2})+\frac{4(C|\alpha\gamma|+2|\alpha|+3|\gamma|+\frac{32\gamma^2}{\min\{|\gamma|,\frac{|\alpha|}{2}\}})+C|\beta\gamma|}{\min\{|\gamma|,\frac{|\alpha|}{2}\}}\\
 +\frac{(|\alpha\gamma^2|+\frac{|\beta\gamma|}{2})^2}{(\min\{|\gamma|,\frac{|\alpha|}{2}\})^2}&C+\frac{C}{(\min\{|\gamma|,\frac{|\alpha|}{2}\})^{\frac{4}{3}}|\alpha|^{\frac{1}{3}}}\Big[(|\alpha|+2|\gamma|)^{\frac{5}{3}}+\frac{|\gamma|^{10}}{(\min\{|\gamma|,\frac{|\alpha|}{2}\})^{\frac{20}{3}}|\alpha|^{\frac{5}{3}}}\Big].
 \end{split}
 \end{equation}
 Here the constant $C$ depends on the best constants for the Gagliardo-Nirenberg inequalities that we have used to
 obtain the expression above but not on the parameters present in the Schr\"odinger-KdV system.
 The construction of this function is given in \eqref{Equ2.2}, \eqref{exp1}, \eqref{phi} below. With this information on hand, we have Theorem \ref{th3}.

\begin{theorem}\label{th3} \textbf{(Non-resonant interactions and with smallness assumption)}
Let $\alpha$, $\gamma\in \mathbb{R}^{+}$ and $\beta\in \mathbb{R}^{-}$. Assume that $(u,v)$ is the solution to the Schr\"odinger-KdV system \eqref{Equ(0.1)} with $(u_0,v_0)\in (H^1(\mathbb{R}))^2$ such that
$(u,v)\in (C(\mathbb{R}^{+};H^1(\mathbb{R})))^2$. If the initial data $(u_0,v_0)\in (H^1(\mathbb{R}))^2$ satisfies
\begin{equation}\label{smal}
-\beta \Phi\left(\left\|u_{0}\right\|_{H^{1}},\left\|v_{0}\right\|_{H^{1}}\right)\leq \alpha \gamma,
\end{equation}
where the function $\Phi(\cdot,\cdot)$ is given in \eqref{exp1-a},
then we obtain the local energy decay,
\begin{equation}\label{decaysmal2}
\begin{split}
\lim _{t\rightarrow \infty} \inf \int_{| x | \lesssim t^{p}} (\partial_xv)^2 d x=\lim _{t\rightarrow \infty} \inf \int_{| x | \lesssim t^{p}} |\partial_xu|^2 d x=0,\quad\quad \forall p\in (0,\frac{2}{3}).
\end{split}
\end{equation} Moreover, for any given $k\in (2,+\infty)$, we have
\begin{equation}\label{decaysmal3}
\begin{split}\lim _{t\rightarrow \infty} \inf \int_{| x | \lesssim t^{p}} |v|^{k} d x=\lim _{t\rightarrow \infty} \inf \int_{| x | \lesssim t^{p}} |u|^{k} d x=0.
\end{split}
\end{equation}
\end{theorem}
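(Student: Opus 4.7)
The plan is to adapt the proof of Theorem \ref{th2} to the focusing regime $\beta\in\mathbb{R}^{-}$. The identity \eqref{cru1} remains valid verbatim; what flips is the sign of the contribution $-(\beta\theta_3/(2t))\int|u|^4 w'g\,dx$, which is now nonnegative on the right-hand side. Rather than being a free positive gain as in Theorem~\ref{th2}, this term must now be \emph{absorbed back} into the dispersive left-hand side, and the hypothesis \eqref{smal} is tailored so that this absorption closes.

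The first step is to secure a uniform-in-time $H^{1}$ bound. I will combine the conservation laws \eqref{cons1}--\eqref{cons3} with Gagliardo--Nirenberg inequalities (to estimate $\int v|u|^{2}$, $\int v^{3}$, and the wrong-signed $(\beta\gamma/2)\|u\|_{L^{4}}^{4}$), Young's inequality to absorb $\|\partial_{x}u\|_{L^{2}}^{2}$ and $\|\partial_{x}v\|_{L^{2}}^{2}$, and the momentum identity \eqref{cons2} to couple $\|v\|_{L^{2}}^{2}$ to $\|u\|_{L^{2}}\|\partial_{x}u\|_{L^{2}}$. The resulting algebraic system for $(\|u(t)\|_{H^{1}},\|v(t)\|_{H^{1}})$ closes exactly under the smallness \eqref{smal}; the exponent $5$ and the structure of $C_{\alpha,\beta,\gamma}$ in \eqref{exp1-b} record the Gagliardo--Nirenberg and Young constants generated by this iteration, and $-\beta\Phi\le\alpha\gamma$ is the threshold at which the bootstrap closes, yielding $\sup_{t\ge 0}(\|u(t)\|_{H^{1}}+\|v(t)\|_{H^{1}})\le\Phi$.

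Next, mollify $(u_{0},v_{0})$ as in Theorem~\ref{th1} to reduce to classical $H^{4}$-solutions $(u_{n},v_{n})$, apply the identity \eqref{cru1} with $\theta_{3}=2\theta_{2}\gamma/\alpha$ (which eliminates the cross term $\int|u_{n}|^{2}\partial_{x}v_{n}\,wg$ since $\alpha,\gamma>0$), and integrate from $t=2$ to $T$. The boundary terms $J_{2}^{n}+J_{3}^{n}$ are uniformly bounded in $T$ by Step~1, the internal terms $J_{2,int}^{n}+J_{3,int}^{n}$ are integrable by the computations of Theorem~\ref{th2}, and the cubic contribution $(2\theta_{2}/t)\int(v_{n}^{3}/3-\gamma|u_{n}|^{2}v_{n})w'g$ is handled via the splitting $v^{3}/3-\gamma|u|^{2}v=\tfrac{2}{3}v(v^{2}/2-\gamma|u|^{2})-\tfrac{\gamma}{3}v|u|^{2}$ together with Proposition~\ref{prop1}, Corollary~\ref{prop4}, and the uniform $L^{\infty}$ bound on $v_{n}$. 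The one remaining \emph{bad} term $(|\beta|\theta_{2}\gamma/(\alpha t))\int|u_{n}|^{4}w'g$ is attacked by the one-dimensional Sobolev embedding applied to the localized function $u_{n}\sqrt{w'g}$, combined with the pointwise estimate $|(w'g)'|^{2}/(w'g)\lesssim w'g\,(\lambda_{1}^{-2}+\lambda_{2}^{-2})$ coming from the explicit forms of $w$ and $g$. This produces a bound of the shape
\[
\int|u_{n}|^{4}w'g\,dx\le C\|u_{0}\|_{L^{2}}^{3}\,\Bigl[\Bigl(\int|\partial_{x}u_{n}|^{2}w'g\,dx\Bigr)^{1/2}+\|u_{0}\|_{L^{2}}\,\lambda_{1}(t)^{-1}\Bigr],
\]
after which a weighted Young's inequality, together with the uniform $H^{1}$ bound of Step~1 and the convergence $\int_{2}^{\infty}dt/(t\lambda_{1}(t))<\infty$ (since $\lambda_{1}(t)=t^{p_{1}}$, $p_{1}>0$), absorbs this integral into at most half of the dispersive coefficient $4\theta_{2}\gamma/\alpha$ on the left, precisely under the hypothesis \eqref{smal}.

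After absorption, passing to the limit $n\to\infty$ by $H^{1}$-continuity of the Schr\"odinger--KdV flow yields
\[
\int_{2}^{\infty}\frac{1}{t}\int_{\mathbb{R}}\bigl(|\partial_{x}u|^{2}+|\partial_{x}v|^{2}\bigr)\,w'\Bigl(\frac{x}{\lambda_{1}(t)}\Bigr)g\Bigl(\frac{x}{\lambda_{2}(t)}\Bigr)\,dx\,dt<\infty,
\]
and \eqref{decaysmal2} follows from $1/t\notin L^{1}(2,\infty)$ together with the properties of $w',g$ exactly as in Theorem~\ref{th2}. The higher-order decay \eqref{decaysmal3} is then obtained by interpolation between the uniform $H^{1}$ bound and this local $L^{2}$ control of $\partial_{x}u,\partial_{x}v$, using the equivalence (noted after Theorem~\ref{th2}) of weighted $L^{2+m}$ integrability for $u$ and $v$. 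The main obstacle throughout is the closing of the absorption step: in Theorem~\ref{th2} the $|u|^{4}$ term was a free gain, while here it must be paid for out of the dispersive budget. Every Gagliardo--Nirenberg and Young constant used along the way---both in the a priori $H^{1}$ bootstrap of Step~1 and in the main identity---contributes to $C_{\alpha,\beta,\gamma}$, and \eqref{smal} is the minimal hypothesis keeping the worst-case product of these constants below the critical threshold set by the dispersive coefficient.
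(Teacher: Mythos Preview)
Your absorption step does not close. After the localized Sobolev/Agmon estimate you obtain
\[
\int_{\mathbb{R}}|u|^{4}\,w'g\,dx\ \lesssim\ \|u_{0}\|_{L^{2}}^{3}\Bigl(\int_{\mathbb{R}}|\partial_{x}u|^{2}\,w'g\,dx\Bigr)^{1/2}+\frac{\|u_{0}\|_{L^{2}}^{4}}{\lambda_{1}(t)},
\]
which is correct, but the exponent $1/2$ is fatal. Any Young inequality that linearizes $\bigl(\int|\partial_{x}u|^{2}w'g\bigr)^{1/2}$ produces a residual term of order $O(1)$ in $t$; multiplied by the ambient $1/t$ this yields a contribution of size $1/t$, which is \emph{not} in $L^{1}(2,\infty)$. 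Equivalently, if you integrate in time first and apply Cauchy--Schwarz, you get $A(T)\le C+C'(\log T)^{1/2}A(T)^{1/2}$ with $A(T):=\int_{2}^{T}t^{-1}\!\int|\partial_{x}u|^{2}w'g\,dx\,dt$, which only gives $A(T)\lesssim \log T$ and not a uniform bound. The hypothesis \eqref{smal} cannot cure this: it would let you make the coefficient of the absorbable term small, but the Young residual is a constant divided by $t$ regardless of how small that coefficient is. A related issue appears already in your treatment of the cubic term: the splitting $v^{3}/3-\gamma|u|^{2}v=\tfrac{2}{3}v(v^{2}/2-\gamma|u|^{2})-\tfrac{\gamma}{3}v|u|^{2}$ leaves the piece $v|u|^{2}$, which is \emph{not} covered by Proposition~\ref{prop1} or Corollary~\ref{prop4} alone; separating it from $\alpha v|u|^{2}+\beta|u|^{4}$ again requires the very $|u|^{4}$ control you are trying to establish.

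The paper avoids absorption altogether. It first observes, by multiplying the conclusions of Proposition~\ref{prop1} and Corollary~\ref{prop4} by $\|u\|_{L^{\infty}_{x,t}}$, that both $I_{1}:=\int_{2}^{\infty}t^{-1}\!\int\bigl|\gamma|u|^{4}-\tfrac12 v^{2}|u|^{2}\bigr|w'g\,dx\,dt$ and $I_{2}:=\int_{2}^{\infty}t^{-1}\!\int\bigl|\alpha v|u|^{2}+\beta|u|^{4}\bigr|w'g\,dx\,dt$ are finite. The linear combination $-\beta I_{1}+\gamma I_{2}$ then dominates $\int_{2}^{\infty}t^{-1}\!\int|u|^{2}|v|\,\bigl|\alpha\gamma+\tfrac{\beta}{2}v\bigr|\,w'g\,dx\,dt$. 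Here the smallness \eqref{smal} is used \emph{pointwise}: since $\|v\|_{L^{\infty}_{x,t}}\le\Phi$, one has $\alpha\gamma+\tfrac{\beta}{2}v\ge\alpha\gamma/2>0$, so $\int_{2}^{\infty}t^{-1}\!\int|u|^{2}|v|\,w'g<\infty$, and then $\int_{2}^{\infty}t^{-1}\!\int|u|^{4}w'g<\infty$ follows by subtracting this from $I_{2}$. With the $|u|^{4}$ integral controlled \emph{a priori}, the right-hand side of \eqref{cru1} (with $\theta_{3}=2\theta_{2}\gamma/\alpha$) is time-integrable and the rest proceeds exactly as in Theorem~\ref{th2}. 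The role of \eqref{smal} is thus to guarantee a pointwise sign, not to tune an absorption constant.
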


\medskip

To prove Theorem \ref{th3}, we first need to analyze the $L^1(2,+\infty)$ integrability of the function
\begin{equation*}
-\frac{\beta\theta_3}{2t}\int_{\mathbb{R}}|u|^4w'(\frac{x}{\lambda_1(t)})g(\frac{x}{\lambda_2(t)})dx,
\end{equation*}
which comes up in the right hand side of the identity \eqref{cru1}, for $\beta<0$. Again, by using Proposition \ref{prop1}, Corollary \ref{prop4}, Lemma \ref{result6} and the restriction $\beta<0$, we can prove
\begin{equation}\label{a1}\int^{+\infty}_2\int_{\mathbb{R}}\frac{1}{t}|u|^2|v||\alpha\gamma +\frac{v\beta}{2}|w'(\frac{x}{\lambda_1(t)})g(\frac{x}{\lambda_2(t)})dxdt<C(\|u_{0}\|_{H^1(\mathbb{R})},\|v_{0}\|_{H^1(\mathbb{R})}).\end{equation}
Using the \lq\lq smallness" restriction \eqref{smal}, we deduce from \eqref{a1} that
\begin{equation*}
\int^{+\infty}_2\int_{\mathbb{R}}\frac{1}{t}|u|^2 |\alpha v|w'(\frac{x}{\lambda_1(t)})g(\frac{x}{\lambda_2(t)})dxdt<C(\|u_{0}\|_{H^1(\mathbb{R})},\|v_{0}\|_{H^1(\mathbb{R})}).
\end{equation*}
This estimate will help us to show that
\begin{equation}\label{a0}
-\frac{\beta\theta_3}{2t}\int_{\mathbb{R}}|u|^4w'(\frac{x}{\lambda_1(t)})g(\frac{x}{\lambda_2(t)})dx\in L^1(2,+\infty).\end{equation}
Once we obtain \eqref{a0}, we will utilize the identity \eqref{cru1} and a similar argument as the one employed in the proof Theorem \ref{th2} to establish Theorem \ref{th3}.

\begin{remark}
In comparison to the case $\beta\geq 0$, we have added the \lq\lq smallness" restriction \eqref{smal} for the case
$\beta<0$.  The restriction \eqref{smal} implies that we have the local decay \eqref{decaysmal2} and \eqref{decaysmal3} in the following situations: for any large initial data $(u_0,v_0)\in (H^1(\mathbb{R}))^2$ and small absolute value of $\beta<0$ and for small initial data $(u_0,v_0)\in (H^1(\mathbb{R}))^2$  and large
absolute value for the parameter $\beta<0$.
\end{remark}

\indent Noticing that the local energy decay region $\{x:| x | \lesssim t^{p}\}$ in Theorems \ref{th1}, \ref{th2} and \ref{th3} is a symmetric  interval  about origin. As a corollary, we can consider the non-centered case, that is, we will translate the origin to the right by $t^{m}$ units.

\begin{corollary}\label{result2}
Under the same assumptions as in Theorems \ref{th2} and \ref{th3}, we have
\begin{equation*}
\begin{split}
\liminf _{t \rightarrow\infty} \int_{| x-t^{m} | \lesssim t^{p}}|u|^{k}(x, t)  \mathrm{~d} x=\liminf _{t \rightarrow \infty} \int_{| x-t^{m} | \lesssim t^{p}}|v|^{k}(x, t) \mathrm{~d} x=0, \quad k\in (2,+\infty)
\end{split}
\end{equation*}
and
\begin{equation*}
\begin{split}
\lim _{t\rightarrow \infty} \inf \int_{| x-t^{m} | \lesssim t^{p}} (\partial_xv)^2 d x=\lim _{t\rightarrow \infty} \inf \int_{|  x-t^{m} | \lesssim t^{p}} |\partial_xu|^2 d x=0,
\end{split}
\end{equation*}
where $0<p<\frac{2}{3}$ and $0<m<1-\frac{p}{2}$.
\end{corollary}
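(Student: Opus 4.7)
The plan is to rerun the proofs of Theorems \ref{th2} and \ref{th3} verbatim, replacing every occurrence of the weight $w\!\left(\frac{x}{\lambda_1(t)}\right)g\!\left(\frac{x}{\lambda_2(t)}\right)$ by its shifted version $w\!\left(\frac{x-t^m}{\lambda_1(t)}\right)g\!\left(\frac{x-t^m}{\lambda_2(t)}\right)$. The properties of $w$ and $g$ used throughout, namely monotonicity of $w$, positivity of $w'$, and the uniform boundedness together with rapid decay of $g$, $g'$, $w'$ and their derivatives, are all translation-invariant, so the algebraic structure of every integration by parts, and in particular the crucial identity \eqref{cru1}, transfers to the shifted setting with the same sign conventions.

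The only genuinely new contributions arise when $\partial_t$ falls on the shift $-t^m$ inside the arguments of $w$ and $g$. For instance, for the shifted analogue
\[
\tilde J_2(t) := \frac{\theta_2}{\eta(t)} \int_{\mathbb R} v^2(x,t)\, w\!\left(\frac{x-t^m}{\lambda_1(t)}\right) g\!\left(\frac{x-t^m}{\lambda_2(t)}\right) dx,
\]
differentiating in time produces, beyond the terms already handled, drift contributions of the schematic form
\[
-\frac{m\,t^{m-1}}{\eta(t)\,\lambda_i(t)} \int_{\mathbb R} v^2(x,t)\,[\text{derivative of one weight}]\cdot[\text{the other weight}]\, dx,
\]
for $i=1,2$, and analogous expressions for the shifted $\tilde J_3$ with $v^2$ replaced by $\Im(u\,\overline{\partial_x u})$. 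Using the conservation laws \eqref{cons1}--\eqref{cons3} to control $\|u(t)\|_{L^2}$, $\|v(t)\|_{L^2}$, $\|\partial_x u(t)\|_{L^2}$ and $\|\partial_x v(t)\|_{L^2}$ uniformly in $t$, together with the bounds $\|w\|_{L^\infty}+\|g\|_{L^\infty}+\|w'\|_{L^\infty}+\|g'\|_{L^\infty}\le C$, each such new term is $O(t^{m-1}/\eta(t))$ after the change of variables $y=(x-t^m)/\lambda_i(t)$. The hypothesis $0<m<1-p/2$, combined with the freedom to choose $p_1\in(p,\tfrac{2}{3})$, guarantees the $L^1(2,\infty)$ integrability of these drift pieces by the same time-power counting that already made the original terms integrable in Propositions \ref{prop1}--\ref{prop3}.

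Once the drift contributions are absorbed into $L^1(2,\infty)$, the shifted analogue of \eqref{cru1} yields
\[
\int_2^{\infty} \frac{1}{t}\int_{\mathbb R} \left[3(\partial_x v)^2 + \frac{4\gamma}{\alpha}|\partial_x u|^2\right] w'\!\left(\frac{x-t^m}{\lambda_1(t)}\right) g\!\left(\frac{x-t^m}{\lambda_2(t)}\right) dx\, dt < \infty,
\]
and, through the same bootstrap used to pass from \eqref{decay2} to \eqref{decay3}, analogous bounds for $|u|^k$ and $|v|^k$ with $k\in(2,\infty)$. Since for $p < p_1 < 2/3$ the shifted interval $\{|x-t^m|\lesssim t^p\}$ satisfies $|(x-t^m)/\lambda_1(t)| = O(t^{p-p_1}) \to 0$, on this interval $w'$ is bounded below by a positive constant, and the conclusions of Corollary \ref{result2} follow exactly as in the centered case.

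The main obstacle is the careful verification that, under the sharp threshold $m<1-p/2$, every drift term generated by the translation remains time-integrable on $(2,\infty)$; a secondary technical point is that in adapting the approximation argument used in the proof of Theorem \ref{th1}, passing from regularized data to $(u_0,v_0)\in (H^1(\mathbb R))^2$, the shifted weights do not affect the convergence relations \eqref{s-13}--\eqref{s-15}, which follows from dominated convergence applied exactly as in the centered case.
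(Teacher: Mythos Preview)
Your approach is exactly the one the paper indicates: it states that the proof follows by redefining the functionals with the shifted weights $w((x-t^m)/\lambda_1(t))\,g((x-t^m)/\lambda_2(t))$, rerunning the arguments of Theorems~\ref{th2} and~\ref{th3}, and then omits the details. Your identification of the extra drift terms and of the role of the constraint $m<1-p/2$ in their time-integrability is on target; note only that the sharp restriction $m<1-p_1/2$ actually arises from the shifted analogue of the estimate for $I$ in Proposition~\ref{prop1} (a single factor of $v_n$ handled by Cauchy--Schwarz, giving $t^{m-1}/(\eta(t)\lambda_1(t)^{1/2})$), whereas the drift pieces in $\tilde J_2$ and $\tilde J_3$ are $O(t^{m-1}/(\eta(t)\lambda_i(t)))$ and require only $m<1$.
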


\begin{remark} Although Theorems \ref{th2} and \ref{th3} can be extended to the non-centered case, we have the same restrictions on the index $m$. The idea and techniques in the proof of Corollary \ref{result2} are similar to those one in  Theorems \ref{th2} and \ref{th3}. The main difference in the argument is to  redefine  the  corresponding functionals  by shifting the spatial variable of the weight function to the right by $t^{m}$ units. Hence, we omit the proof of Corollary \ref{result2}.
\end{remark}

\begin{remark} We remark that our results depend on the global in time solution to the Schr\"odinger-KdV system \eqref{Equ(0.1)} on the line\footnote{One may refer to the other  well-posedness results  of the Schr\"odinger-KdV system, for example \cite{JDE2006Corcho,Arxiv2024Chenjie,SIAM2024SOS}.}.  Corcho-Linares \cite{TAMS2007Linares} proved the global well-posedness (GWP) of the Schr\"odinger-KdV system \eqref{Equ(0.1)} in $H^1(\mathbb{R})\times H^1(\mathbb{R})$ in 2007 by the mass and the energy conservation laws, Pecher \cite{DIE2005Pecher} extended the GWP in $H^k(\mathbb{R})\times H^s(\mathbb{R})$ with $k=s\geq \frac{3}{5}$, Wu \cite{DIE2010Wuyifei} extended the GWP in $H^k(\mathbb{R})\times H^s(\mathbb{R})$ with $k=s> \frac{1}{2}$.  Most recently, Correia-Linares-Silva \cite{Arxiv2024Linares} have proved the GWP in $H^k(\mathbb{R})\times H^s(\mathbb{R})$ with\footnote{For the most recent local well-posedness result for the Schr\"odinger-KdV system, one may refer to \cite{Arxiv2024Ban} and \cite{Arxiv2024Linares}. }
\begin{equation}\label{param1}
(k,s)\in \mathcal{A}\cap \{(k,s):k>\frac{1}{2}, s>\frac{1}{2}\},
\end{equation}
where $\mathcal{A}:=\{(k,s)\in \mathbb{R}^2: k\geq 0, s>-\frac{3}{4}, s<4k, -2<k-s<3\}$. In this paper, we consider the initial data in $H^1(\mathbb{R})\times H^1(\mathbb{R})$. Generalizing the results in Theorems \ref{th2} and \ref{th3} to that of  the Schr\"odinger-KdV system \eqref{Equ(0.1)} when the initial data in $H^k(\mathbb{R})\times H^s(\mathbb{R})$ with $(k,s)$ satisfying \eqref{param1} is not an easy problem. It might be possible that the ideas and some technical modifications in our arguments can be applied to deal with the case \eqref{param1}.
\end{remark}

\medskip

The structure of this paper is as follows. In Section 2, we derive some  {\it a priori} estimates for the solution of the Schr\"odinger-KdV system. We also define two useful functionals which we will employ in our arguments. In Section 3, we prove several key estimates as well as two identities. In Section 4, we prove Theorems \ref{th1} and \ref{th2}. In the last section, we show Theorem \ref{th3}.

\section{Preparation}

\hspace*{\parindent} In this section, we will use an {\it a priori} estimate to the solutions to the Schr\"odinger-KdV system \eqref{Equ(0.1)} to construct two new Virial functionals. As observed and proved in \cite{TAMS2007Linares}, the solution $(u,v)$ to the Schr\"odinger-KdV system \eqref{Equ(0.1)} has the following {\it a priori} estimates.

\begin{lemma}\label{result6} (see \cite{TAMS2007Linares}) Let $\alpha,\beta,\gamma\in \mathbb{R}$ satisfying $\alpha \cdot \gamma>0$. The Schr\"odinger-KdV system \eqref{Equ(0.1)} with $(u_0,v_0)\in (H^1(\mathbb{R}))^2$ admits a unique solution $(u,v)\in (C([0,\infty); H^1(\mathbb{R})))^2$.  Moreover,
\begin{equation}\label{Equ2.2}
\begin{split}
\sup _{t\in[0,+\infty)}\left(\|u(t)\|_{H^{1}}+\|v(t)\|_{H^{1}}\right) \leq \Phi \left(\left\|u_{0}\right\|_{H^{1}},\left\|v_{0}\right\|_{H^{1}}\right),
\end{split}
\end{equation}
where $\Phi$ is a function only depending on $\left\|u_{0}\right\|_{H^{1}}$ and $\left\|v_{0}\right\|_{H^{1}}.$
\end{lemma}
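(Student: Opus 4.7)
The plan is to combine the three conservation laws \eqref{cons1}--\eqref{cons3} with a local well-posedness result in $H^1\times H^1$, and then use Gagliardo--Nirenberg interpolation together with the sign assumption $\alpha\gamma>0$ to close an a priori $H^1\times H^1$ bound that is uniform in time. By the symmetry $(\alpha,\beta,\gamma)\mapsto (-\alpha,-\beta,-\gamma)$, which reverses the signs of $\mathcal{Q}$ and $\mathcal{E}$ but preserves the system structure, I may assume $\alpha,\gamma>0$.

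First, I would invoke local well-posedness in $(H^1)^2$ on a time interval $[0,T]$ with $T=T(\|u_0\|_{H^1},\|v_0\|_{H^1})$; this is obtained by the standard Bourgain-space contraction argument adapted to the coupled Schr\"odinger--KdV equations (this is precisely the local theory component of \cite{TAMS2007Linares}), and it gives existence and uniqueness in $C([0,T]; H^1)^2$. Mass conservation \eqref{cons1} immediately yields $\|u(t)\|_{L^2}=\|u_0\|_{L^2}$. From \eqref{cons2} and Cauchy--Schwarz,
\begin{equation*}
\alpha\|v(t)\|_{L^2}^{2} \le |\mathcal{Q}(0)| + 2\gamma\|u_0\|_{L^2}\|\partial_x u(t)\|_{L^2},
\end{equation*}
so $\|v(t)\|_{L^2}$ is controlled by the initial data and $\|\partial_x u(t)\|_{L^2}$.

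Next, the key step is to control the top-order quantities through the energy \eqref{cons3}. Since $\alpha,\gamma>0$ the positive part of $\mathcal{E}$ is $\tfrac{\alpha}{2}\|\partial_x v\|_{L^2}^{2}+\gamma\|\partial_x u\|_{L^2}^{2}$, while the indefinite pieces $\alpha\gamma\int v|u|^2$, $-\tfrac{\alpha}{6}\int v^3$, and $\tfrac{\beta\gamma}{2}\int|u|^4$ must be absorbed. By Gagliardo--Nirenberg,
\begin{equation*}
\Big|\!\int v|u|^2\,dx\Big|\le C\|v\|_{L^2}\|u\|_{L^2}^{3/2}\|\partial_x u\|_{L^2}^{1/2}, \qquad \Big|\!\int v^3\,dx\Big|\le C\|v\|_{L^2}^{5/2}\|\partial_x v\|_{L^2}^{1/2},
\end{equation*}
\begin{equation*}
\int|u|^4\,dx\le C\|u\|_{L^2}^{3}\|\partial_x u\|_{L^2}.
\end{equation*}
Substituting these in $\mathcal{E}(t)=\mathcal{E}(0)$, eliminating $\|v(t)\|_{L^2}$ via the $\mathcal{Q}$ identity above, and applying Young's inequality with exponents tuned to the fractional powers of $\|\partial_x u\|_{L^2}$ and $\|\partial_x v\|_{L^2}$, I would absorb those factors into the positive quadratic terms and arrive at
\begin{equation*}
\|\partial_x u(t)\|_{L^2}^{2}+\|\partial_x v(t)\|_{L^2}^{2}\le P(\|u_0\|_{H^1},\|v_0\|_{H^1}),
\end{equation*}
for a polynomial $P$. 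Together with the $L^2$ bounds this is \eqref{Equ2.2}, and then the standard continuation argument iterates the local theory to $[0,\infty)$.

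The main obstacle will be the indefinite cubic term $-\tfrac{\alpha}{6}\!\int v^3$ and the coupling term $\alpha\gamma\!\int v|u|^2$: neither has a favorable sign when $\alpha\gamma>0$, which is precisely the reason the sign of $(\alpha,\gamma)$ matters in the conservation-law analysis. The delicate point is choosing the Young's-inequality exponents so that the $\|\partial_x v\|_{L^2}^{1/2}$ and $\|\partial_x u\|_{L^2}^{1/2}$ factors can be absorbed without creating uncontrolled high powers of $\|v\|_{L^2}$; tracking these powers carefully, and then undoing the $\mathcal{Q}$-elimination step, is what forces the explicit high-degree polynomial dependence on $(\|u_0\|_{H^1},\|v_0\|_{H^1})$ visible in \eqref{exp1-a}--\eqref{exp1-b}.
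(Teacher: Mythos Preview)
Your proposal is correct and follows essentially the same route as the paper (and the cited reference \cite{TAMS2007Linares}): the paper, in deriving the explicit $\Phi$ in \eqref{apri3}--\eqref{phi}, uses exactly the scheme you outline---mass conservation for $\|u\|_{L^2}$, the $\mathcal{Q}$-identity to control $\|v\|_{L^2}$ in terms of $\|\partial_x u\|_{L^2}$, then the energy together with the Gagliardo--Nirenberg bounds on $\int v|u|^2$, $\int v^3$, $\int |u|^4$ and Young's inequality to absorb the fractional powers of $\|\partial_x u\|_{L^2}$ and $\|\partial_x v\|_{L^2}$ into the positive part of $\mathcal{E}$. The continuation argument and the polynomial growth of $\Phi$ you anticipate also match.
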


Next, we aim to  construct some weighted  functionals.  We first introduce two weight functions. Let
 \begin{equation}\label{weight1}
 g(x)=\frac{1}{e^{x}+e^{-x}},\quad \text{then} \quad g(x) \backsim e^{-|x|}.
 \end{equation}
It can be directly checked that one of the primitive functions of $g(x)$ is
\begin{equation}\label{weight2}
w(x):=\int g(x)dx=\arctan (e^{x}).
\end{equation}
Moreover
\begin{equation}\label{est1}
|w''(x)|+|w'''(x)|\lesssim e^{-|x|}.
\end{equation}
For  any $ p_{1}>0, p_2>1, r_{1}>0$, we set
\begin{equation}\label{Equ2.4}
\begin{split}
\lambda_{1}(t)=t^{p_{1}},\quad \lambda_{2}(t)=t^{p_1p_{2}},  \text{ and }  \eta(t)=t^{r_{1}}.
\end{split}
\end{equation}
In the next section, we will impose some restrictions on the indices $p_1$, $p_2$ and $r_1$.

Now, for the solution $(u, v)$  of \eqref{Equ(0.1)} with $(u_0,v_0)\in (H^1(\mathbb{R}))^2$,  we  define two
functionals  $\{J_i(t)\}_{i=2,3}$ as below,
\begin{equation}\label{J2}
J_2(t):=\frac{\theta_2}{\eta(t)}\int_{\mathbb{R}}v^2(x,t)\,w(\frac{x}{\lambda_1(t)})g(\frac{x}{\lambda_2(t)})\,dx,\quad\theta_2\in \mathbb{R}^{+},
\end{equation}
and
\begin{equation}\label{J3}
J_3(t):=\frac{\theta_3}{\eta(t)}\int_{\mathbb{R}}{\Im }[u(x,t)\overline{\partial_xu(x,t)}]w(\frac{x}{\lambda_1(t)})g(\frac{x}{\lambda_2(t)})dx,\quad \theta_3\in \mathbb{R}^{+},
\end{equation}
where $\Im u$ denotes the imaginary part of the complex valued function $u$.

Applying Lemma \ref{result6}, one can directly check that $J_2(t)$ and $J_3(t)$
are well-defined. Moreover, $J_2(t), J_3(t)\in C(\mathbb{R}^{+})$ for any $(u,v)\in (C([0,\infty); H^1(\mathbb{R})))^2$.

\section{Some key estimates and useful identities}
\hspace*{\parindent} In this section, we aim to derive several estimates for
\begin{equation*}
\int^{+\infty}_{2}\int_{\mathbb{R}}\frac{1}{t}|\frac{v^2}{2}-\gamma|u|^2|w'(\frac{x}{\lambda_1(t)})g(\frac{x}{\lambda_2(t)})dxdt
\end{equation*}
and
\begin{equation*}
\int^{+\infty}_{2}\int_{\mathbb{R}}\frac{1}{t}| u(\alpha v+\beta |u|^2)| w'\left(\frac{x}{\lambda_{1}(t)}\right)g\left(\frac{x}{\lambda_{2}(t)}\right)dxdt.
\end{equation*}
Also we will construct two  identities of the functional $\frac{d}{dt}J_{j}(t)$  by finding another functional $J_{j,int}(t)\in L^1(2,+\infty)$ satisfying
\begin{equation*}
\frac{d}{dt}J_{j}(t)=-J_{j,int}(t)+\widetilde{J}_{j}(t),\quad\quad j=2,3.
\end{equation*}
These estimates and identities will be used to prove the lower-order ``mixed" local energy decay and first-order ``unmixed" local energy decay in next section.

\begin{proposition}\label{prop1}For any given $(\alpha,\beta,\gamma)\in \mathbb{R}^3$ with $\alpha\gamma>0$, let $(u,v)$ be the global weak solution to the Schr\"odinger-KdV system \eqref{Equ(0.1)} with the initial data $(u_0,v_0)\in (H^1(\mathbb{R}))^2$, then
\begin{equation}\label{1result}
\int^{+\infty}_2\frac{1}{t}\int_{\mathbb{R}}|\frac{v^2}{2}-\gamma|u|^2|w'(\frac{x}{\lambda_1(t)})g(\frac{x}{\lambda_2(t)})dxdt < C(\|u_{0}\|_{H^1(\mathbb{R})},\|v_{0}\|_{H^1(\mathbb{R})}),
\end{equation}
where the functions $g(x)$, $w(x)$, $\lambda_{1}(t)$ and $\lambda_{2}(t)$  are introduced in
\eqref{weight1}-\eqref{Equ2.4}. More precisely,
\begin{equation*}
\begin{split}
g(x)=\frac{1}{e^{x}+e^{-x}},\quad w(x)=\int g(x)dx,\\
\lambda_{1}(t)=t^{p_{1}},\hskip15pt\lambda_{2}(t)=t^{p_1p_{2}},
\end{split}
\end{equation*}
with  $ r_{1}>0$, $p_1+r_1=1$, $p_2>1$ and $0<p_1<\frac{2}{p_2+2}$.
\end{proposition}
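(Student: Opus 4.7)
The strategy follows the roadmap in the introduction: mollify the $H^{1}$ data to obtain classical solutions for which all formal manipulations are justified, establish the desired space-time weighted bound uniformly for the regularized solutions using the flux structure of the KdV part, and then pass to the limit via $H^{1}$ continuous dependence.

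\textbf{Regularization and reduction of the absolute value.} Choose $(u_{0,n}, v_{0,n}) \in H^{4} \times H^{4}$ with $(u_{0,n}, v_{0,n}) \to (u_{0}, v_{0})$ in $H^{1} \times H^{1}$; by Corcho-Linares \cite{TAMS2007Linares} and Lemma~\ref{result6}, the corresponding classical solutions $(u_{n}, v_{n}) \in C(\R^{+}; H^{4})^{2}$ satisfy the $H^{1}$ a priori bound uniformly in $t$ and in $n$. Set $A_{n} := \frac{v_{n}^{2}}{2} - \gamma|u_{n}|^{2}$ and $\phi(x,t) := w'(x/\lambda_{1}(t))\,g(x/\lambda_{2}(t))$. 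The key observation, since $\gamma > 0$, is that $A_{n} \ge -\gamma|u_{n}|^{2}$, so
\begin{equation*}
|A_{n}| \le A_{n} + 2\gamma|u_{n}|^{2}.
\end{equation*}
Thus it suffices to bound, uniformly in $n$, both $\int_{2}^{\infty}\frac{1}{t}\int A_{n}\phi\,dx\,dt$ (signed) and $\int_{2}^{\infty}\frac{1}{t}\int|u_{n}|^{2}\phi\,dx\,dt$ (nonnegative); then continuous dependence $(u_{n}, v_{n}) \to (u, v)$ in $C(\R^{+}; H^{1})^{2}$ combined with Fatou/dominated convergence transfers the bound to $(u, v)$.

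\textbf{KdV flux identity and Schr\"odinger control.} The signed integral is handled directly from the KdV equation in flux form,
\begin{equation*}
\partial_{t} v_{n} + \partial_{x}\Big(\tfrac{v_{n}^{2}}{2} + \partial_{x}^{2} v_{n} - \gamma|u_{n}|^{2}\Big) = 0,
\end{equation*}
which yields $\partial_{x} A_{n} = -\partial_{t} v_{n} - \partial_{x}^{3} v_{n}$. Pairing against a bounded antiderivative $W_{n}(x,t)$ of $\phi/t$ and integrating by parts in $x$ and $t$ over $[2, T] \times \R$ gives
\begin{equation*}
\int_{2}^{T}\!\!\int A_{n}\frac{\phi}{t}\,dx\,dt = \Big[\int v_{n} W_{n}\,dx\Big]_{2}^{T} - \int_{2}^{T}\!\!\int v_{n} (W_{n})_{t}\,dx\,dt + \int_{2}^{T}\!\!\int v_{n} (W_{n})_{xxx}\,dx\,dt;
\end{equation*}
the index constraints $p_{1} + r_{1} = 1$, $p_{2} > 1$, and $0 < p_{1} < 2/(p_{2}+2)$ are precisely what makes the boundary term bounded and the two interior terms $L^{1}(2,\infty)$ in time, using the exponential decay of $w'$ and $g$ together with the $H^{1}$ bound on $v_{n}$. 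For the second integral I would use the Schr\"odinger conservation law $\partial_{t}|u_{n}|^{2} + 2\partial_{x}\Ima(u_{n}\overline{\partial_{x} u_{n}}) = 0$ in a parallel weighted integration by parts, relying on the coupling $\alpha\gamma > 0$ (through Lemma~\ref{result6}) to close the estimate.

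\textbf{Main obstacle.} The hardest step is the bound on $\int_{2}^{\infty}\int|u_{n}|^{2}\phi/t\,dx\,dt$: the naive estimate $\|u_{n}\|_{L^{\infty}}^{2}\int\phi\,dx \lesssim t^{p_{1}-1}$ is not integrable on $(2,\infty)$ for any $p_{1} > 0$, so the bound cannot come from the a priori $H^{1}$ estimate alone and one must extract genuine dynamical information from the Schr\"odinger flow. The two-weight structure $w'(x/\lambda_{1})\,g(x/\lambda_{2})$ with $\lambda_{2} = \lambda_{1}^{p_{2}}$, $p_{2} > 1$, combined with the sharp constraint $0 < p_{1} < 2/(p_{2}+2)$, is the analytic engine that supplies the missing time-decay; the book-keeping needed to verify this, and to justify every integration by parts at the regularized level before passing to the $H^{1}$ limit, is what makes the argument delicate.
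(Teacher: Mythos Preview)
Your reduction $|A_n| \le A_n + 2\gamma|u_n|^2$ is correct, and the signed integral $\int_2^\infty\frac{1}{t}\int A_n\,\phi\,dx\,dt$ can indeed be bounded via the KdV flux identity essentially as you describe. The gap is in the second term: the quantity
\[
\int_2^\infty\frac{1}{t}\int_{\R}|u_n|^2\,w'\!\Big(\frac{x}{\lambda_1(t)}\Big)g\!\Big(\frac{x}{\lambda_2(t)}\Big)\,dx\,dt
\]
is \emph{not} controlled, and in general should not be expected to be finite. The continuity equation $\partial_t|u_n|^2 + \partial_x F_n = 0$ with $F_n = -2\,\Ima(u_n\overline{\partial_x u_n})$ does not help: after pairing with the antiderivative weight and integrating by parts in $x$, the term that emerges against $\phi/t$ is the \emph{flux} $F_n$, not the \emph{density} $|u_n|^2$. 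There is no mechanism in the Schr\"odinger flow that converts a space-time bound on $F_n$ into one on $|u_n|^2$. Concretely, if the coupled system supports a standing or slowly moving bound state with $\int_{|x|\lesssim 1}|u|^2\,dx \gtrsim c_0 > 0$ for all large $t$, then your integral is $\gtrsim \int_2^\infty c_0/t\,dt = \infty$. This is precisely the missing estimate flagged in the introduction (see the discussion around \eqref{decay4}): when $\alpha,\gamma>0$ the weighted $L^2$ bound on $|u|^2$ (and on $v^2$) is not available, and the whole point of Proposition~\ref{prop1} is to bypass it.

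The paper's proof does exactly that. Rather than bounding the absolute value by $A_n + 2\gamma|u_n|^2$, it splits the spatial line at each time $t$ into $E_{+,t}=\{A_n>0\}$, $E_{-,t}=\{A_n<0\}$, and multiplies the KdV equation pointwise by $\chi_{E_{+,t}}(x)\,\tfrac{1}{\eta(t)}w(x/\lambda_1)g(x/\lambda_2)$ (respectively $\chi_{E_{-,t}}$). Because $\partial_x\chi_{E_{\pm,t}}=0$ on $E_{\pm,t}$ and the boundary is $E_{0,t}=\{A_n=0\}$ where $A_n$ vanishes, the integrations by parts in $x$ go through cleanly and produce directly $\int_{E_{\pm,t}} A_n\,w'g/t$ with a fixed sign, summing to $\int |A_n|\,w'g/t$. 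The terms $\partial_t v_n$ and $\partial_x^3 v_n$ are handled using only the $H^1$ a priori bound and the index restrictions; no estimate on $\int|u_n|^2\phi/t$ or $\int v_n^2\phi/t$ is ever needed. Your regularization and limit-passing steps are fine and coincide with Steps~2--3 of the paper; the missing idea is this sign-splitting device in Step~1.
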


\noindent{\it Proof.} We prove Proposition \ref{prop1} in three steps.

\medskip

\textbf{\textbf{Step 1:  Suppose that $(u_n,v_n)$ is a smooth solution of  \eqref{Equ(0.1)} with the initial data $(u_{n0},v_{n0})\in (H^s(\mathbb{R}))^2$} where $s>4$.}  Then, we can rewrite the second equation of the Schr\"odinger-KdV system
\begin{equation}\label{smooth-sol}
\left\{
\begin{array}{llll}i\partial_tu_n+\partial^2_xu_n=\alpha u_nv_n+\beta u_n|u_n|^2,\hskip25pt (x,t)\in \mathbb{R}\times \mathbb{R}^{+},\\
\partial_tv_n+\partial^3_xv_n+v_n\partial_xv_n=\gamma \partial_x(|u_n|^2), \quad\quad (x,t)\in \mathbb{R}\times \mathbb{R}^{+},\\
\left.(u_n, v_n)\right|_{t=0} =\left(u_{n0}, v_{n0}\right),\\
\end{array}
\right.
\end{equation}
as
\begin{equation}\label{s-1}
\partial_tv_n+\partial^3_xv_n+\partial_x(\frac{1}{2}v^2_n-\gamma |u_n|^2 )=0,
\end{equation}
which holds pointwise.

We set
\begin{equation*}
\begin{split}
&E_{-,t}=\{x:\frac{v^2_n}{2}(x,t)-\gamma|u_n|^2(x,t)<0\},\\
&E_{0,t}=\{x:\frac{v^2_n}{2}(x,t)-\gamma|u_n|^2(x,t)=0\},\\
&E_{+,t}=\{x:\frac{v^2_n}{2}(x,t)-\gamma|u_n|^2(x,t)>0\}.
\end{split}
\end{equation*}
It can be directly checked that $E_{-,t}\subset\mathbb{R} $ and $E_{+,t}\subset\mathbb{R}$ are open for any fixed $t\in \mathbb{R}^{+}$ by using the local sign preserving  properties of continuous functions and the fact that $\frac{v^2_n}{2}(x,t)-\gamma|u_n|^2(x,t)\in C(\mathbb{R})$ for any fixed $t\in \mathbb{R}^{+}$. The case $E_{0,t}$ means that the coupled KdV equation in the Schr\"odinger-KdV \eqref{smooth-sol} is linear at the time $t$.

Multiplying both sides of \eqref{s-1} by the multiplier $\chi_{E_{+,t}}(x)\frac{1}{\eta(t)}w(\frac{x}{\lambda_1(t)})g(\frac{x}{\lambda_2(t)})$, where $\chi_{E_{+,t}}(x)$ is the characteristic function defined on the set $E_{+,t}$, and $\eta(t)$ is defined in \eqref{Equ2.4}, integrating over $\mathbb{R}$ with respect to the space variable, and then integrating over $(2,+\infty)$ with respect to the time variable, we arrive at
\begin{equation}\label{s-2}
\begin{split}
&\underbrace{\int^{+\infty}_2\int_{\mathbb{R}}\partial_tv_n\chi_{E_{+,t}}(x)\frac{1}{\eta(t)}w(\frac{x}{\lambda_1(t)})g(\frac{x}{\lambda_2(t)})dxdt}_{I}\\
&+\underbrace{\int^{+\infty}_2\int_{\mathbb{R}}\partial^3_xv_n\chi_{E_{+,t}}(x)\frac{1}{\eta(t)}w(\frac{x}{\lambda_1(t)})g(\frac{x}{\lambda_2(t)})dxdt}_{I\!I}\\
&\underbrace{+\int^{+\infty}_2\int_{\mathbb{R}}\partial_x(\frac{1}{2}v^2_n-\gamma |u_n|^2 )\chi_{E_{+,t}}(x)\frac{1}{\eta(t)}w(\frac{x}{\lambda_1(t)})g(\frac{x}{\lambda_2(t)})dxdt}_{I\!I\!I}=0.
\end{split}
\end{equation}
Now, we move to estimate $I$, $II$, and $III$.

For $III$: Using integration by parts, the fact \[\frac{\partial \chi_{E_{+,t}(x)}}{\partial_x}|_{x\in E_{+,t}\cup E_{-,t}}=0,\] and the boundary of $E_{+,t}$ is $E_{0,t}$, we have

\begin{equation}\label{s-3}
\begin{split}
 I\!I\!I=&\int_{\mathbb{R}}\partial_x(\frac{1}{2}v^2_n-\gamma |u_n|^2 )\chi_{E_{+,t}}(x)\frac{1}{\eta(t)}w(\frac{x}{\lambda_1(t)})g(\frac{x}{\lambda_2(t)})dx\\
=&\int_{E_{-,t}\cup E_{0,t}\cup E_{+,t} }\partial_x(\frac{1}{2}v^2_n-\gamma |u_n|^2 )\chi_{E_{+,t}}(x)\frac{1}{\eta(t)}w(\frac{x}{\lambda_1(t)})g(\frac{x}{\lambda_2(t)})dx
\end{split}
\end{equation}
\begin{equation*}
\begin{split}
=&\int_{E_{+,t} }\partial_x(\frac{1}{2}v^2_n-\gamma |u_n|^2 )\chi_{E_{+,t}}(x)\frac{1}{\eta(t)}w(\frac{x}{\lambda_1(t)})g(\frac{x}{\lambda_2(t)})dx\\
=&-\int_{E_{+,t} }(\frac{1}{2}v^2_n-\gamma |u_n|^2 )\cancel{\partial_x\chi_{E_{+,t}}(x)}\frac{1}{\eta(t)}w(\frac{x}{\lambda_1(t)})g(\frac{x}{\lambda_2(t)})dx\\
&-\int_{E_{+,t} }(\frac{1}{2}v^2_n-\gamma |u_n|^2 )\chi_{E_{+,t}}(x)\partial_x[\frac{1}{\eta(t)}w(\frac{x}{\lambda_1(t)})g(\frac{x}{\lambda_2(t)})]dx\\
&+(\frac{1}{2}v^2_n-\gamma |u_n|^2 )\cancel{\chi_{E_{+,t}}(x)}\frac{1}{\eta(t)}w(\frac{x}{\lambda_1(t)})g(\frac{x}{\lambda_2(t)})|_{E_{0,t}}\\
=&-\int_{E_{+,t} }(\frac{1}{2}v^2_n-\gamma|u_n|^2) \chi_{E_{+,t}}(x)\partial_x[\frac{1}{\eta(t)}w(\frac{x}{\lambda_1(t)})g(\frac{x}{\lambda_2(t)})]dx.
\end{split}
\end{equation*}
Therefore,
\begin{equation}\label{s-4}
\begin{split}
-I\!I\!I=&\int^{\infty}_2\int_{E_{+,t} }(\frac{1}{2}v^2_n-\gamma |u_n|^2 )\chi_{E_{+,t}}(x)\partial_x[\frac{1}{\eta(t)}w(\frac{x}{\lambda_1(t)})g(\frac{x}{\lambda_2(t)})]dxdt\\
=&\int^{\infty}_2\frac{1}{\eta(t)\lambda_1(t)}\int_{E_{+,t} }(\frac{1}{2}v^2_n-\gamma |u_n|^2 )\chi_{E_{+,t}}(x)w'(\frac{x}{\lambda_1(t)})g(\frac{x}{\lambda_2(t)})dxdt\\
&+\underbrace{\int^{\infty}_2\frac{1}{\eta(t)\lambda_2(t)}\int_{E_{+,t} }(\frac{1}{2}v^2_n-\gamma |u_n|^2 )\chi_{E_{+,t}}(x)w(\frac{x}{\lambda_1(t)})g'(\frac{x}{\lambda_2(t)})dxdt}_{I\!I\!I_a},
\end{split}
\end{equation}
where
\begin{equation}\label{s-5}
|\!I\!I\!I_a|<C(\|u_{n0}\|_{H^1(\mathbb{R})},\|v_{n0}\|_{H^1(\mathbb{R})} ).
\end{equation}

In fact, using the restriction $r_1+p_1=1,\, p_1>0,\,p_2>1$,  one has
\begin{equation*}
\begin{split}
|I\!I\!I_a| &\lesssim \int^{\infty}_2\frac{1}{\eta(t)\lambda_2(t)}\int_{\mathbb{R} }|\frac{1}{2}v^2_n-\gamma |u_n|^2| dxdt
 \lesssim \sup_{t\in (2,+\infty)}(\|u_n\|_{L^2_x}+\|v_n\|_{L^2_x})\int^{\infty}_2\frac{1}{\eta(t)\lambda_2(t)}dt\\
  &\lesssim C(\|u_{n0}\|_{H^1_x},\|v_{n0}\|_{H^1_x})\int^{\infty}_2\frac{1}{t^{r_1+p_1p_2}}dt
  \lesssim C(\|u_{n0}\|_{H^1},\|v_{n0}\|_{H^1}).
\end{split}
\end{equation*}

For $II$: Using integration by parts twice, the fact \[\frac{\partial \chi_{E_{+,t}(x)}}{\partial_x}|_{x\in E_{+,t}\cup E_{-,t}}=0,\] and the boundary of $E_{+,t}$ is $E_{0,t}$, we have
\begin{equation*}
\begin{split}
I\!I=&\int_{\mathbb{R}}\partial^3_xv_n\chi_{E_{+,t}}(x)\frac{1}{\eta(t)}w(\frac{x}{\lambda_1(t)})g(\frac{x}{\lambda_2(t)})dx\\
=&\int_{E_{-,t}\cup E_{0,t}\cup E_{+,t}}\partial^3_xv_n\chi_{E_{+,t}}(x)\frac{1}{\eta(t)}w(\frac{x}{\lambda_1(t)})g(\frac{x}{\lambda_2(t)})dx\\
=&\int_{E_{+,t}}\partial^3_xv_n\chi_{E_{+,t}}(x)\frac{1}{\eta(t)}w(\frac{x}{\lambda_1(t)})g(\frac{x}{\lambda_2(t)})dx\\
=&-\int_{E_{+,t}}\partial^2_xv_n\cancel{\partial_x{\chi_{E_{+,t}}(x)}}\frac{1}{\eta(t)}w(\frac{x}{\lambda_1(t)})g(\frac{x}{\lambda_2(t)})dx
\end{split}
\end{equation*}
\begin{equation*}
\begin{split}
&-\int_{E_{+,t}}\partial^2_xv_n\chi_{E_{+,t}}(x)\frac{1}{\eta(t)}\partial_x[w(\frac{x}{\lambda_1(t)})g(\frac{x}{\lambda_2(t)})]dx\\
&+\partial^2_xv_n\cancel{\chi_{E_{+,t}}(x)}\frac{1}{\eta(t)}w(\frac{x}{\lambda_1(t)})g(\frac{x}{\lambda_2(t)})|_{E_{0,t}}\\
=&-\int_{E_{+,t}}\partial^2_xv_n\chi_{E_{+,t}}(x)\frac{1}{\eta(t)}\partial_x[w(\frac{x}{\lambda_1(t)})g(\frac{x}{\lambda_2(t)})]dx\\
=&\int_{E_{+,t}}\partial_xv_n\cancel{\partial_x(\chi_{E_{+,t}}(x))}\frac{1}{\eta(t)}\partial_x[w(\frac{x}{\lambda_1(t)})g(\frac{x}{\lambda_2(t)})]dx\\
&+\int_{E_{+,t}}\partial_xv_n\chi_{E_{+,t}}(x)\frac{1}{\eta(t)}\partial^2_x[w(\frac{x}{\lambda_1(t)})g(\frac{x}{\lambda_2(t)})]dx\\
&+\partial_xv_n\cancel{\chi_{E_{+,t}}(x)}\frac{1}{\eta(t)}\partial_x[w(\frac{x}{\lambda_1(t)})g(\frac{x}{\lambda_2(t)})]|_{E_{0,t}}\\
=&\frac{1}{\eta(t)}\int_{E_{+,t}}\partial_xv_n\chi_{E_{+,t}}(x)\partial^2_x[w(\frac{x}{\lambda_1(t)})g(\frac{x}{\lambda_2(t)})]dx.
\end{split}
\end{equation*}


We claim that
\begin{equation}\label{s-6}
|I\!I|<C(\|u_{n0}\|_{H^1(\mathbb{R})},\|v_{n0}\|_{H^1(\mathbb{R})}).
\end{equation}

Indeed, using the Cauchy-Schwarz inequality and the restrictions $r_1+p_1=1, \;r_1>0$,
\begin{equation*}
\lambda_1(t)=t^{p_1}<\lambda_2(t)=t^{p_1p_2} \quad \text{ with}\quad p_2>2,
\end{equation*}
we have
\begin{equation*}
\begin{split}
|I\!I|=&|\int^{+\infty}_{2}\frac{1}{\eta(t)}\int_{E_{+,t}}\partial_xv_n\chi_{E_{+,t}}(x)\partial^2_x[w(\frac{x}{\lambda_1(t)})g(\frac{x}{\lambda_2(t)})]dxdt|\\
\lesssim& |\int^{+\infty}_{2}\frac{1}{\eta(t)\lambda^2_1(t)}\int_{\mathbb{R}}|\partial_xv_n|w''(\frac{x}{\lambda_1(t)})g(\frac{x}{\lambda_2(t)})dxdt|\\
&+|\int^{+\infty}_{2}\frac{1}{\eta(t)\lambda^2_2(t)}\int_{\mathbb{R}}|\partial_xv_n|w(\frac{x}{\lambda_1(t)})g''(\frac{x}{\lambda_2(t)})dxdt|\\
&+|\int^{+\infty}_{2}\frac{1}{\eta(t)\lambda_2(t)\lambda_1(t)}\int_{\mathbb{R}}|\partial_xv_n|w'(\frac{x}{\lambda_1(t)})g'(\frac{x}{\lambda_2(t)})dxdt|
\end{split}
\end{equation*}
\begin{equation*}
\begin{split}
\lesssim &\int^{+\infty}_{2}\frac{\lambda^{\frac{1}{2}}_1(t)}{\eta(t)\lambda^2_1(t)}\|\partial_xv_n\|_{L^2_x}(t)\|w''\|_{L^2_x}dt+ \int^{+\infty}_{2}\frac{\lambda^{\frac{1}{2}}_2(t)}{\eta(t)\lambda^2_2(t)}\|\partial_xv_n\|_{L^2_x}(t)\|g''\|_{L^2_x}dt\\
&+ \int^{+\infty}_{2}\frac{\lambda^{\frac{1}{2}}_2(t)}{\eta(t)\lambda_1(t)\lambda_2(t)}\|\partial_xv_n\|_{L^2_x}(t)\|g'\|_{L^2_x}dt\\
\lesssim& \sup_{t\in (2,+\infty)}\|\partial_xv_n\|_{L^2_x}(t) \int^{+\infty}_{2}\Big( \frac{\lambda^{\frac{1}{2}}_1(t)}{\eta(t)\lambda^2_1(t)}+\frac{\lambda^{\frac{1}{2}}_1(t)}{\eta(t)\lambda^2_1(t)}+ \frac{\lambda^{\frac{1}{2}}_2(t)}{\eta(t)\lambda_1(t)\lambda_2(t)} \Big)dt\\
\lesssim& C(\|u_{n0}\|_{H^1(\mathbb{R})}, \|v_{n0}\|_{H^1(\mathbb{R})})\int^{+\infty}_{2}\frac{1}{t^{r_1+2p_1}}dt\\
\lesssim& C(\|u_{n0}\|_{H^1(\mathbb{R})}, \|v_{n0}\|_{H^1(\mathbb{R})}).
\end{split}
\end{equation*}

\medskip

For $I$: We first use the Fubini theorem to change the order of integration, i.e.,
\begin{equation}\label{s-7}
\begin{split}
I=&\int^{+\infty}_2\int_{\mathbb{R}}\partial_tv_n\chi_{E_{+,t}}(x)\frac{1}{\eta(t)}w(\frac{x}{\lambda_1(t)})g(\frac{x}{\lambda_2(t)})dxdt\nonumber\\
=&\int_{\mathbb{R}}\int^{+\infty}_2\partial_tv_n\chi_{E_{+,x}}(t)\frac{1}{\eta(t)}w(\frac{x}{\lambda_1(t)})g(\frac{x}{\lambda_2(t)})dtdx.
\end{split}
\end{equation}
We note that we have changed the factor $\chi_{E_{+,t}}(x)$ as $\chi_{E_{+,x}}(t)$ with \[E_{+,x}=\{t:\frac{v^2_n}{2}(x,t)-\gamma|u_n|^2(x,t)>0\},\] in the expression of $I$ since we have changed the integration orders.
Using integrating by parts with respect to the time, it follows
\begin{equation*}
\begin{split}
I=&\int_{\mathbb{R}}\int^{+\infty}_2\partial_tv_n\chi_{E_{+,x}}(t)\frac{1}{\eta(t)}w(\frac{x}{\lambda_1(t)})g(\frac{x}{\lambda_2(t)})dtdx\\
=&-\int_{\mathbb{R}}\int_{(2,+\infty)\cap E_{+,x}}v_n\partial_t[\chi_{E_{+,x}}(t)\frac{1}{\eta(t)}w(\frac{x}{\lambda_1(t)})g(\frac{x}{\lambda_2(t)})]dtdx\\
&+\underbrace{\int_{\mathbb{R}}\left[v_n\chi_{E_{+,x}}(t)\frac{1}{\eta(t)}w(\frac{x}{\lambda_1(t)})g(\frac{x}{\lambda_2(t)})\right]|_{t\in \partial(E_{+,x}\cap [2,+\infty))}dx}_{I_a},
\end{split}
\end{equation*}
where $\partial(E_{+,x}\cap [2,+\infty))$ denotes the boundary of $E_{+,x}\cap [2,+\infty)$.

Since
\begin{equation*}
\begin{split}
\partial(E_{+,x}\cap [2,+\infty))&\subset E_{0,x}\cup \{2\}\cup\{+\infty\},\\
\chi_{E_{+,x}}(t)|_{E_{0,x}}&\equiv 0,\\
\lim_{t\rightarrow +\infty}\frac{1}{\eta(t)}w(\frac{x}{\lambda_1(t)})g(\frac{x}{\lambda_2(t)})&=\frac{\pi}{8}\lim_{t\rightarrow +\infty}\frac{1}{\eta(t)}=0,
\end{split}
\end{equation*}
applying the Cauchy-Schwarz inequality and Lemma \ref{result6} it follows that
\begin{equation}\label{s-21}
\begin{split}
|I_a|=&|\int_{\mathbb{R}}\left[v_n\chi_{E_{+,x}}(t)\frac{1}{\eta(t)}w(\frac{x}{\lambda_1(t)})g(\frac{x}{\lambda_2(t)})\right]|_{t\in \partial(E_{+,x}\cap [2,+\infty))}dx|\\
&\lesssim  \int_{\mathbb{R}}|v_n(x,2)|g(\frac{x}{\lambda_1(2)})dx\\
& \lesssim \|v_n(\cdot,2)\|_{L^2(\mathbb{R})}\|g\|_{L^2(\mathbb{R})}(\lambda_1(2))^{\frac{1}{2}}\\
&\lesssim C(\|u_{n0}\|_{H^1(\mathbb{R})},\|v_{n0}\|_{H^1(\mathbb{R})} ).
\end{split}
\end{equation}
Since $\partial_t\chi_{E_{+,x}}(t)=0$ for any $t\in (2,+\infty)\cap E_{+,x}$ and the fact $E_{+,x}$ is open\footnote{We can prove that $u_n(x,t)$, $v_n(x,t)\in C_x(\mathbb{R}; C_t(\mathbb{R}^{+}))$ since $u_n(x,t)$, $v_n(x,t)\in C_t(\mathbb{R}^{+}; H^1_x(\mathbb{R}))$. Then $\frac{v^2_n}{2}(x,t)-\gamma|u_n|^2(x,t)$ is continuous on the time for any fixed $x\in \mathbb{R}$. Hence, the set $E_{+,x}$ is open by the local preserving sign properties of continuous functions.}, we now decompose $I$ as
\begin{equation}\label{s-8}
\begin{split}
I=&-\int_{\mathbb{R}}\int_{(2,+\infty)\cap E_{+,x}}v_n\partial_t\cancel{\chi_{E_{+,x}}(t)}\frac{1}{\eta(t)}w(\frac{x}{\lambda_1(t)})g(\frac{x}{\lambda_2(t)})dtdx \\
&-\int_{\mathbb{R}}\int_{(2,+\infty)\cap E_{+,x}}v_n\chi_{E_{+,x}}(t)\partial_t[\frac{1}{\eta(t)}w(\frac{x}{\lambda_1(t)})g(\frac{x}{\lambda_2(t)})]dtdx \\
&+I_a\\
=&-\int_{\mathbb{R}}\int_{(2,+\infty)\cap E_{+,x}}v_n\chi_{E_{+,x}}(t)\partial_t[\frac{1}{\eta(t)}w(\frac{x}{\lambda_1(t)})g(\frac{x}{\lambda_2(t)})]dtdx+I_a.
\end{split}
\end{equation}

\medskip

Now employing that
\begin{equation*}
\begin{split}
\partial_t[\frac{1}{\eta(t)}w(\frac{x}{\lambda_1(t)})g(\frac{x}{\lambda_2(t)})]
=&-\frac{\eta'(t)}{\eta(t)}\frac{1}{\eta(t)}w(\frac{x}{\lambda_1(t)})g(\frac{x}{\lambda_2(t)})-\frac{\lambda_1'(t)}{\lambda_1(t)}\frac{1}{\eta(t)}w'(\frac{x}{\lambda_1(t)})\frac{x}{\lambda_1(t)}g(\frac{x}{\lambda_2(t)})\\
&-\frac{\lambda_2'(t)}{\lambda_2(t)}\frac{1}{\eta(t)}w(\frac{x}{\lambda_1(t)})g'(\frac{x}{\lambda_2(t)})\frac{x}{\lambda_2(t)},
\end{split}
\end{equation*}
using the Fubini theorem, the Cauchy-Schwarz inequality in the last term in \eqref{s-8}, and the facts
\[\lambda_1(t)=t^{p_1},\quad \lambda_2(t)=t^{p_1p_2},\quad \eta(t)=t^{r_1},\]
\[\frac{\eta'(t)}{\eta(t)}\backsim \frac{\lambda_1'(t)}{\lambda_1(t)}\backsim \frac{\lambda_2'(t)}{\lambda_2(t)}\backsim\frac{1}{t},\]
\[w(\cdot)\in L^{\infty}(\mathbb{R}),w'(\cdot)(\cdot),g(\cdot),g'(\cdot)(\cdot)\in L^2(\mathbb{R}),\]
\begin{equation*}
\sup_{t\in(2,+\infty),x\in \mathbb{R}}\|v_n\|\lesssim \|v_n\|_{C(2,+\infty;H^1(\mathbb{R}))}\lesssim C(\|v_{n0}\|_{H^1(\mathbb{R})},\|u_{n0}\|_{H^1(\mathbb{R})}),
\end{equation*}
we deduce that
\begin{equation}\label{s-9}
\begin{split}
|I|=&|\int_{\mathbb{R}}\int_{(2,+\infty)\cap E_{+,x}}v_n\chi_{E_{+,x}}(t)\partial_t[\frac{1}{\eta(t)}w(\frac{x}{\lambda_1(t)})g(\frac{x}{\lambda_2(t)})]dtdx+I_a|\\
\leq& \int_{\mathbb{R}}\int_{(2,+\infty)\cap E_{+,x}}|v_n\chi_{E_{+,x}}(t)\partial_t[\frac{1}{\eta(t)}w(\frac{x}{\lambda_1(t)})g(\frac{x}{\lambda_2(t)})]|dtdx+|I_a|\\
\leq& \int^{+\infty}_{2}\int_{\mathbb{R}}|v_n\partial_t[\frac{1}{\eta(t)}w(\frac{x}{\lambda_1(t)})g(\frac{x}{\lambda_2(t)})]|dxdt+|I_a|\\
\lesssim & \; C(\|v_{n0}\|_{H^1(\mathbb{R})},\|u_{n0}\|_{H^1(\mathbb{R})})\int^{\infty}_2\frac{1}{t}\frac{1}{t^{r_1-\frac{p_1p_2}{2}}}dt+|I_a|.
\end{split}
\end{equation}

Now, the conditions $p_1+r_1=1,\; p_1>0$, and
\begin{equation}
0<p_1<\frac{2}{p_2+2},\hskip20pt p_2>1,
\end{equation}
guarantee the finiteness of the last integral on the right hand side of  \eqref{s-9}. Combining this and \eqref{s-21} yield
\begin{equation}\label{s-10}
|I|\lesssim \;C(\|v_{n0}\|_{H^1(\mathbb{R})},\|u_{n0}\|_{H^1(\mathbb{R})}),\quad\quad \forall n\geq 1.
\end{equation}

\medskip

Gathering the information obtained in \eqref{s-4}, and substituting \eqref{s-5}, \eqref{s-6} and \eqref{s-10} into \eqref{s-2}, we have that
\begin{equation}\label{step1res}
\begin{split}
\int^{\infty}_2\frac{1}{\eta(t)\lambda_1(t)}&\int_{E_{+,t} }(\frac{1}{2}v^2_n-\gamma |u_n|^2 )\chi_{E_{+,t}}(x)w'(\frac{x}{\lambda_1(t)})g(\frac{x}{\lambda_2(t)})dxdt\\
=&-I-I\!I-I\!I\!I_{a}\\
\lesssim & \; C(\|u_{n0}\|_{H^1(\mathbb{R})},\|v_{n0}\|_{H^1(\mathbb{R})}).
\end{split}
\end{equation}

\medskip

A similar argument leads to
\begin{equation*}
\begin{split}
\int^{\infty}_2\frac{1}{\eta(t)\lambda_1(t)}&\int_{E_{-,t} }(\frac{1}{2}v^2_n-\gamma |u_n|^2 )\chi_{E_{-,t}}(x)w'(\frac{x}{\lambda_1(t)})g(\frac{x}{\lambda_2(t)})dxdt\\
\lesssim & \,C(\|u_{n0}\|_{H^1(\mathbb{R})},\|v_{n0}\|_{H^1(\mathbb{R})}).
\end{split}
\end{equation*}

Thus, combining  the above two estimates, using the definitions of $E_{-,t}$, $E_{0,t}$, $E_{+,t}$, and the identity $\eta(t)\lambda_1(t)=t^{p_1+r_1}=t$, we arrive at
\begin{equation}\label{1step1res}
\int^{\infty}_2\frac{1}{t}\int_{\mathbb{R}}|\frac{1}{2}v^2_n-\gamma |u_n|^2 |w'(\frac{x}{\lambda_1(t)})g(\frac{x}{\lambda_2(t)})dxdt\lesssim  C(\|u_{n0}\|_{H^1(\mathbb{R})},\|v_{n0}\|_{H^1(\mathbb{R})}).
\end{equation}

\vspace{3mm}

\textbf{\textbf{Step 2:  Construct a regularized sequence of solutions to the Schr\"odinger-KdV system \eqref{Equ(0.1)} with the initial data $(u_{0},v_{0})\in (H^1(\mathbb{R}))^2$}.}

\medskip

Let $\zeta\in C_0^{\infty}(\mathbb R)$, $\zeta(x)\equiv 1$, if $|x|\le 1$; $\zeta(x)=0$, if $|x|\ge 2 $
satisfying $\int_{\mathbb{R}}\zeta(x)dx=1$.
We then introduce the mollifier sequences $\{\zeta_{n}(x)\}^{+\infty}_{n=1}$ by letting
\begin{equation*}
\zeta_{n}(x)=n\zeta(n x),\quad\quad\quad n=1,2,3,\dots
\end{equation*}


For given initial data $(u_{0},v_{0})\in H^1(\mathbb{R})\times H^1(\mathbb{R})$ in \eqref{Equ(0.1)}, we construct an approximate initial data sequence through
\begin{equation*}
u_{n0}(x)=u_0(x)\ast \zeta_{n}(x),\quad\quad v_{n0}(x)=v_0(x)\ast \zeta_{n}(x).
\end{equation*}
It can be readily checked that
\begin{equation}\label{s-11}
\lim_{n\rightarrow +\infty}\|u_{n0}-u_{0}\|_{H^1(\mathbb{R})}+\lim_{n\rightarrow +\infty}\|v_{n0}-v_{0}\|_{H^1(\mathbb{R})}=0,
\end{equation}
and
\begin{equation}\label{s-12}
(u_{n0},v_{n0})\in H^{s}(\mathbb{R})\times H^{s}(\mathbb{R}),\quad \forall s>4,\quad \forall n\in \mathbb{Z}^{+}.\end{equation}
Hence the  solution $(u_{n},v_{n})$ to the Schr\"odinger-KdV system \eqref{Equ(0.1)} with the initial data
$(u_{n0},v_{n0})$ is a classical global solution \cite{MSA1993Tsutsumi}.

Let $(u,v)$ be the weak solution of the Schr\"odinger-KdV system \eqref{Equ(0.1)} with the initial data
$(u_{0},v_{0})\in (H^1(\mathbb{R}))^2$. We claim that:
\begin{description}
  \item[Claim 1] \begin{equation}\label{s-13}
  (u_{n},v_{n})\rightarrow (u,v) \text{ in } (C(0,+\infty; H^1(\mathbb{R})))^2 \text{ as } n\rightarrow +\infty.
  \end{equation}
\item[Claim 2] \begin{equation}\label{s-14}
  \lim_{n\rightarrow+\infty}\int^{\infty}_{2}\int_{\mathbb{R}}|v^2_n-v^2|\frac{w'(\frac{x}{\lambda_1(t)})g(\frac{x}{\lambda_2(t)})}{t}dxdt=0.
  \end{equation}
\item[Claim 3] \begin{equation}\label{s-15}
    \lim_{n\rightarrow+\infty}\int^{\infty}_{2}\int_{\mathbb{R}}||u_n|^2-|u|^2|\frac{w'(\frac{x}{\lambda_1(t)})g(\frac{x}{\lambda_2(t)})}{t}dxdt=0.
    \end{equation}
\end{description}

We prove Claims 1 and 2 by using a contradiction argument. Since the argument to show Claim 3 is similar to the one employed to establish Claim 2, we omit the details of the proof of Claim 3.\\

\underline{\it Proof of Claim 1:} Indeed,  Corcho-Linares's local Lipschitz continuous argument of the solution map
$(u(x,0),v(x,0))\mapsto (u(x,t),v(x,t))$ (see Section 4 in \cite{TAMS2007Linares}) and \eqref{s-11} imply  that, for any fixed small $\epsilon>0$, there exist $N=N(\epsilon)>0$, for all $n\geq N$ and an appropriate local existence time interval $[0,T]$, satisfying
\begin{equation}\label{1s-16}
\begin{split}
\|u_{n}-u\|_{C([0,T];H^1(\mathbb{R}))}+\|v_{n}-v\|_{C([0,T];H^1(\mathbb{R}))}\leq & c_0\left(\|u_{n0}-u_{0}\|_{H^1(\mathbb{R})}+\|v_{n0}-v_{0}\|_{H^1(\mathbb{R})}\right)\\
\leq & c_0\epsilon.
\end{split}
\end{equation}
Using Lemma \ref{result6} and making a small modification of the local well-posedness argument in \cite{TAMS2007Linares}, we note that the local time $T$ in \eqref{1s-16} is actually independent of $\epsilon$ and $n$.

For any $k\in \mathbb{Z}^{+}$, we can use the previous argument to deduce that
\begin{equation}\label{s-16}
\begin{split}
\|u_{n}-&u\|_{C([kT,(k+1)T];H^1(\mathbb{R}))}+\|v_{n}-v\|_{C([kT,(k+1)T];H^1(\mathbb{R}))}\\
\leq &\; c_0\|u_{n}-u\|_{H^1(\mathbb{R})}(kT)+c_0\|v_{n}-v\|_{H^1(\mathbb{R})}(kT)\\
\leq &\; c^2_0\left(\|u_{n}-u\|_{H^1(\mathbb{R})}((k-1)T)+\|v_{n}-v\|_{H^1(\mathbb{R})}((k-1)T) \right)\\
\leq &\;c^{k+1}_0\left(\|u_{n0}-u_0\|_{H^1(\mathbb{R})}+\|v_{n0}-v_0\|_{H^1(\mathbb{R})} \right)\\
\leq &\; c^{k+1}_0\epsilon.
\end{split}
\end{equation}

Suppose that
$(u_{n},v_{n})$ does not approximate to $(u,v)$ in  $C(0,+\infty; H^1(\mathbb{R}))$  as $n\rightarrow +\infty.$ On one hand, there exist $\epsilon_0>0$, $k_0\in \mathbb{Z}^{+}\cup \{0\}$ and $t_0\in [k_0T,(k_0+1)T]$, for any $N\in \mathbb{Z}^{+}$ and there exists $\widetilde{n}_N\in \mathbb{Z}^{+}$ satisfying
\begin{equation*}
\epsilon_0<\|u_{\widetilde{n}_N}-u\|_{C([k_0T,(k_0+1)T];H^1(\mathbb{R}))}+\|v_{\widetilde{n}_N}-v\|_{C([k_0T,(k_0+1)T];H^1(\mathbb{R}))}.
\end{equation*}
On the other hand, using \eqref{s-16}, one has
\begin{equation*}
\|u_{\widetilde{n}_N}-u\|_{C([k_0T,(k_0+1)T];H^1(\mathbb{R}))}+\|v_{\widetilde{n}_N}-v\|_{C([k_0T,(k_0+1)T];H^1(\mathbb{R}))}\leq  c^{k_0+1}_0\epsilon.
\end{equation*}
Therefore, the parameter $\epsilon>\frac{\epsilon_0}{c^{k_0+1}_0}$. But this contradicts the smallness assumption on
$\epsilon$. Hence, we establish Claim 1.\\

\underline{\it Proof of Claim 2:} Suppose that Claim 2 does not hold. There exists a constant $\epsilon_0>0$, for any $N\in \mathbb{Z}^{+}$, such that we can find an $n>N$ satisfying
\begin{equation*}
\begin{split}
\sum^{+\infty}_{k=2}\int^{(k+1) }_{k}\int_{\mathbb{R}}|v^2_n-v^2|\frac{w'(\frac{x}{\lambda_1(t)})g(\frac{x}{\lambda_2(t)})}{t}dxdt
=&\int^{\infty}_{2}\int_{\mathbb{R}}|v^2_n-v^2|\frac{w'(\frac{x}{\lambda_1(t)})g(\frac{x}{\lambda_2(t)})}{t}dxdt\\
>&\;\epsilon_0.
\end{split}
\end{equation*}
Due to the fact that $\frac{w'(\frac{x}{\lambda_1(t)})g(\frac{x}{\lambda_2(t)})}{t}$ is nonnegative on $(2,+\infty)\times \mathbb{R}$ and $\sum^{+\infty}_{k=1}\frac{1}{k^2}=\frac{\pi^2}{6}$, there must exist $k_0\in \mathbb{N}$ such that

 \begin{equation}\label{s-17}
 k^2_0\int^{k_0+1}_{k_0}\int_{\mathbb{R}}|v^2_n-v^2|\frac{w'(\frac{x}{\lambda_1(t)})g(\frac{x}{\lambda_2(t)})}{t}dxdt>\frac{6}{\pi^2}\epsilon_0.
\end{equation}
Recalling that $v_n,v\in C(0,+\infty;H^1(\mathbb{R}))$, we have
\begin{equation}\label{s-18}
\begin{split}
\int^{k_0+1}_{k_0}\int_{\mathbb{R}}&|v^2_n-v^2|\frac{w'(\frac{x}{\lambda_1(t)})g(\frac{x}{\lambda_2(t)})}{t}dxdt\\
\leq & \int^{k_0+1}_{k_0}\frac{1}{t}\|v^2_n-v^2\|_{L^1(\mathbb{R})}dt\\
\leq &\sup_{t\in (0,+\infty)}\|v_n+v\|_{L^2(\mathbb{R})}(t)\sup_{t\in (0,+\infty)}\|v_n-v\|_{L^2(\mathbb{R})}(t)\ln(1+\frac{1}{k_0})\\
\leq &\;C(\|u_{n0}+u_{0}\|_{H^1(\mathbb{R})},\|v_{n0}+v_{0}\|_{H^1(\mathbb{R})})\sup_{t\in (0,+\infty)}\|v_n-v\|_{L^2(\mathbb{R})}(t)\ln(1+\frac{1}{k_0}),
\end{split}
\end{equation}
where the constant $C(\|u_{n0}+u_{0}\|_{H^1(\mathbb{R})},\|v_{n0}+v_{0}\|_{H^1(\mathbb{R})})$ is bounded
from the definitions of $u_{n0}$, $v_{n0}$ and the argument given in section 4 of \cite{TAMS2007Linares}. More precisely,
$$
C(\|u_{n0}+u_{0}\|_{H^1(\mathbb{R})},\|v_{n0}+v_{0}\|_{H^1(\mathbb{R})})\leqslant C(\|u_0\|_{H^1(\mathbb{R})},\|v_0\|_{H^1(\mathbb{R})})
$$
if $n$ is sufficiently large.

Combining \eqref{s-17} and \eqref{s-18}, we have
\begin{align}\sup_{t\in (0,+\infty)}\|v_n-v\|_{L^2(\mathbb{R})}(t)&\geq\frac{6\epsilon_0}{\pi^2k^2_0C(\|u_0\|_{H^1(\mathbb{R})},\|v_0\|_{H^1(\mathbb{R})})\ln(1+\frac{1}{k_0})},
\end{align}
which contradicts Claim 1.

\medskip

\textbf{Step 3:  Construct $\int^{\infty}_{2}\frac{1}{t}\int_{\mathbb{R}}|\frac{v^2}{2}-\gamma|u|^2|w'(\frac{x}{\lambda_1(t)})g(\frac{x}{\lambda_2(t)})dxdt$ through the regularized sequence solution established in Step 2.}

\medskip

For the regularized sequence solution $\{(u_n,v_n)\}^{+\infty}_{n=1}$ constructed in step 2 for the solution $(u,v)$ of the Schr\"odinger-KdV system \eqref{Equ(0.1)} with the initial data $(u_0,v_0)\in (H^1(\mathbb{R}))^2$, we have
\begin{equation}\label{s-19}
\begin{split}
\int^{+\infty}_2\frac{1}{t}\int_{\mathbb{R}}&|\frac{v^2}{2}-\gamma|u|^2|w'(\frac{x}{\lambda_1(t)})g(\frac{x}{\lambda_2(t)})dxdt\\
=&\int^{+\infty}_2\frac{1}{t}\int_{\mathbb{R}}|\frac{v^2}{2}-\frac{v^2_n}{2}+\gamma |u_n|^2 -\gamma|u|^2+\frac{v^2_n}{2}-\gamma |u_n|^2|w'(\frac{x}{\lambda_1(t)})g(\frac{x}{\lambda_2(t)})dxdt\\
\leq&\int^{+\infty}_2\frac{1}{t}\int_{\mathbb{R}}|\frac{v^2}{2}-\frac{v^2_n}{2}|w'(\frac{x}{\lambda_1(t)})g(\frac{x}{\lambda_2(t)})dxdt\\
&+\int^{+\infty}_2\frac{1}{t}\int_{\mathbb{R}}|\gamma |u_n|^2 -\gamma|u|^2|w'(\frac{x}{\lambda_1(t)})g(\frac{x}{\lambda_2(t)})dxdt\\
&+\int^{+\infty}_2\frac{1}{t}\int_{\mathbb{R}}|\frac{v^2_n}{2}-\gamma |u_n|^2|w'(\frac{x}{\lambda_1(t)})g(\frac{x}{\lambda_2(t)})dxdt.
\end{split}
\end{equation}

Using Claims 2 and 3, for any $\epsilon>0$, there exists $N=N(\epsilon)$ such that for $n\geq N$ we
have
\begin{equation*}
\int^{+\infty}_2\frac{1}{t}\int_{\mathbb{R}}|\frac{v^2}{2}-\frac{v^2_n}{2}|w'(\frac{x}{\lambda_1(t)})g(\frac{x}{\lambda_2(t)})dxdt<\epsilon,
\end{equation*}
and
\begin{equation*}
\int^{+\infty}_2\frac{1}{t}\int_{\mathbb{R}}|\gamma |u_n|^2 -\gamma|u|^2|w'(\frac{x}{\lambda_1(t)})g(\frac{x}{\lambda_2(t)})dxdt<\epsilon.
\end{equation*}

Recalling \eqref{1step1res} and \eqref{s-11},
\begin{equation*}
\begin{split}
\int^{+\infty}_2\frac{1}{t}&\int_{\mathbb{R}}|\frac{1}{2}v^2_n-\gamma |u_n|^2 |w'(\frac{x}{\lambda_1(t)})g(\frac{x}{\lambda_2(t)})dxdt\\
\lesssim &\; C(\|u_{n0}\|_{H^1(\mathbb{R})},\|v_{n0}\|_{H^1(\mathbb{R})})\\
\lesssim &\; C(\|u_{0}\|_{H^1(\mathbb{R})},\|v_{0}\|_{H^1(\mathbb{R})}).
\end{split}
\end{equation*}

The remarks above combined with \eqref{s-19} give us that
\begin{equation}\label{s-20}
\int^{+\infty}_2\frac{1}{t}\int_{\mathbb{R}}|\frac{v^2}{2}-\gamma|u|^2|w'(\frac{x}{\lambda_1(t)})g(\frac{x}{\lambda_2(t)})dxdt<C(\|u_{0}\|_{H^1(\mathbb{R})},\|v_{0}\|_{H^1(\mathbb{R})}).
\end{equation}

$\hfill{} \Box$\\

Using the same idea as in the proof of Proposition \ref{prop1}, we have the following result.

\begin{proposition}\label{2prop2} Under the same conditions on the weight functions $w(x)$ and $g(x)$, and the indices $r_1$,$p_1$,$p_2$ in Proposition \ref{prop1}, we have
\begin{equation}\label{2result}
\int^{+\infty}_2\frac{1}{t} \int_{\mathbb {R}}|\Re u(\alpha v+\beta |u|^2)| w'\left(\frac{x}{\lambda_{1}(t)}\right)g\left(\frac{x}{\lambda_{2}(t)}\right) d xdt\leq C(\|u_0\|_{H^1(\mathbb{R})},\|v_0\|_{H^1(\mathbb{R})}),
\end{equation}
and
\begin{equation}\label{bb1}
\int^{+\infty}_2\frac{1}{t} \int_{\mathbb {R}}|\Im u(\alpha v+\beta |u|^2)| w'\left(\frac{x}{\lambda_{1}(t)}\right)g\left(\frac{x}{\lambda_{2}(t)}\right) d xdt\leq C(\|u_0\|_{H^1(\mathbb{R})},\|v_0\|_{H^1(\mathbb{R})})
\end{equation}
where $\Re u$ and $\Im u$  denote the real and imaginary parts of the complex valued function $u$, respectively.
 \end{proposition}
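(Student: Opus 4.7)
I will follow the strategy of Proposition \ref{prop1}, using the Schr\"odinger equation in place of the KdV equation. Writing $u=p+iq$ with $p=\Re u$ and $q=\Im u$ and splitting $i\partial_t u+\partial_x^2 u=u(\alpha v+\beta|u|^2)$ into real and imaginary parts yields the pointwise identities
\begin{equation*}
\partial_x^2 p-\partial_t q=\Re[u(\alpha v+\beta|u|^2)], \qquad \partial_x^2 q+\partial_t p=\Im[u(\alpha v+\beta|u|^2)],
\end{equation*}
valid for the smooth regularized solution $(u_n,v_n)$ built from mollified data $(u_{n0},v_{n0})\in(H^s(\mathbb{R}))^2$ with $s>4$, exactly as in Step 2 of the proof of Proposition \ref{prop1}.

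For \eqref{2result}, I set $f_n=\Re[u_n(\alpha v_n+\beta|u_n|^2)]$ and $E_{\pm,t}=\{x:\pm f_n(x,t)>0\}$, and multiply the first identity by the non-negative test function
\begin{equation*}
\chi_{E_{+,t}}(x)\,\frac{w'(x/\lambda_1(t))\,g(x/\lambda_2(t))}{\eta(t)\,\lambda_1(t)},
\end{equation*}
integrating over $\mathbb{R}\times(2,+\infty)$. Since $\eta\lambda_1=t$, the right-hand side is precisely the $E_{+,t}$-contribution to the integral one wants to bound. Because the nonlinear term $u(\alpha v+\beta|u|^2)$ is not in divergence form (in contrast with Proposition \ref{prop1}), I insert the factor $w'/\lambda_1$ into the multiplier at the outset rather than producing it via a spatial integration by parts. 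The left-hand side decomposes into a spatial piece $\iint\partial_x^2 p_n\,\chi_{E_{+,t}}\,\frac{w'g}{\eta\lambda_1}\,dx\,dt$, handled by two integrations by parts in $x$ while the $\partial_x\chi_{E_{+,t}}$ contributions are discarded as in the proof of Proposition \ref{prop1}, and a temporal piece $-\iint\partial_t q_n\,\chi_{E_{+,t}}\,\frac{w'g}{\eta\lambda_1}\,dx\,dt$, handled by Fubini plus integration by parts in $t$ mirroring the treatment of term $I$ there (with $t=+\infty$ boundary vanishing by $1/(\eta\lambda_1)=1/t\to0$ and $t=2$ boundary bounded by $\|q_n(\cdot,2)\|_{L^\infty_x}$). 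Combining Lemma \ref{result6} with Sobolev embedding $H^1\hookrightarrow L^\infty$ to bound $\|p_n\|_{L^\infty_x}$ and $\|q_n\|_{L^\infty_x}$ uniformly in $t$, and using the weight estimates $\int_\mathbb{R}|\partial_x^2[w'g/\lambda_1]|\,dx\lesssim 1/\lambda_1^2$ and $\int_\mathbb{R}|\partial_t[w'g/(\eta\lambda_1)]|\,dx\lesssim \lambda_1/t^2$, the spatial piece is controlled by $\int_2^\infty t^{-1-p_1}\,dt$ and the temporal piece by $\int_2^\infty t^{p_1-2}\,dt$, both finite under the standing hypotheses $p_1+r_1=1$, $0<p_1<2/(p_2+2)$, $p_2>1$. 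Running the same argument on $E_{-,t}$ and summing removes $\chi$ and gives the bound on $|f_n|$. Applying the identical scheme to the second identity $\partial_x^2 q_n+\partial_t p_n=\Im[u_n(\alpha v_n+\beta|u_n|^2)]$ produces \eqref{bb1}.

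Finally, passage from $(u_n,v_n)$ to the weak solution $(u,v)$ is carried out exactly by the mollification and $C(\mathbb{R}^+;H^1(\mathbb{R}))^2$-convergence argument of Step 2 in the proof of Proposition \ref{prop1}; the analogues of Claims 2 and 3 for the quantities $|\Re[u_n(\alpha v_n+\beta|u_n|^2)]-\Re[u(\alpha v+\beta|u|^2)]|$ and their imaginary counterparts follow from the $H^1$-convergence of $u_n\to u$, $v_n\to v$, Sobolev embedding applied to the cubic and quadratic pairings, and the elementary bound $\|w'(\cdot/\lambda_1)g(\cdot/\lambda_2)\|_{L^1_x}\lesssim\lambda_1$. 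The main technical obstacle is the $t$-integrability of the remainder $\partial_t[w'g/(\eta\lambda_1)]$: the multiplier $w'g/(\eta\lambda_1)$ trades a power of $1/\lambda_1$ on the spatial side against a power of $1/t$ on the temporal side, and the same index arithmetic that closes Proposition \ref{prop1} (namely $p_1+r_1=1$ with $0<p_1<2/(p_2+2)$, $r_1>0$) closes this one as well.
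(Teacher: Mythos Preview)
Your proposal is correct and follows essentially the same route as the paper. The paper takes the real and imaginary parts of the Schr\"odinger equation to obtain $-\partial_t\Im u_n+\partial_x^2\Re u_n=\Re u_n(\alpha v_n+\beta|u_n|^2)$ (and its imaginary counterpart), splits by the sign of the right-hand side, and then invokes ``the same techniques'' as in Proposition~\ref{prop1} to reach the bound with the $w'$ weight; the passage to the limit is identical to yours. The one point the paper leaves implicit and that you make explicit is crucial: since $\Re u_n(\alpha v_n+\beta|u_n|^2)$ is \emph{not} in divergence form, the $w'$ factor cannot be produced by a spatial integration by parts as in Proposition~\ref{prop1}, so the multiplier must carry $w'(x/\lambda_1)/\lambda_1$ from the outset. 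Your choice $\chi_{E_{+,t}}\,w'g/(\eta\lambda_1)$ is exactly what is needed, and your bookkeeping of the resulting spatial piece (two integrations by parts, yielding $\int_2^\infty t^{-1-p_1}\,dt$) and temporal piece (Fubini plus one integration by parts in $t$, yielding $\int_2^\infty t^{p_1-2}\,dt$) is correct. A minor technical difference is that you pair $\|p_n\|_{L^\infty_x},\|q_n\|_{L^\infty_x}$ (via $H^1\hookrightarrow L^\infty$) with $L^1_x$ norms of the weight derivatives, whereas the paper's template in Proposition~\ref{prop1} uses Cauchy--Schwarz with $L^2$; both close under the standing index conditions.
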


\begin{proof} We first construct the regularized sequence solution $\{(u_n,v_n)\}^{+\infty}_{n=1}$ to the Schr\"odinger-KdV system \eqref{Equ(0.1)} with the initial data $(u_{n0},v_{n0})\in H^s(\mathbb{R})\times H^s(\mathbb{R})$ where $s>4$  satisfying
\begin{equation}\label{c3}\lim_{n\rightarrow +\infty}\|u_{n0}-u_{0}\|_{H^1(\mathbb{R})}+\lim_{n\rightarrow +\infty}\|v_{n0}-v_{0}\|_{H^1(\mathbb{R})}=0.\end{equation}

From the first equation in \eqref{smooth-sol}, i.e.
$i\partial_tu_n+\partial^2_xu_n=u_n(\alpha v_n+\beta|u_n|^2)$, we have
\begin{equation}\label{baR}
-\partial_t \Im u_n+\partial^2_x\Re u_n=\Re u_n(\alpha v_n+\beta|u_n|^2).
\end{equation}

Next, we introduce
\[\widetilde{E}_{-,t}=\{x:\Re u_n(\alpha v_n+\beta|u_n|^2)(x,t)<0\},\]
\[\widetilde{E}_{0,t}=\{x:\Re u_n(\alpha v_n+\beta|u_n|^2)(x,t)=0\},\]
\[\widetilde{E}_{+,t}=\{x:\Re u_n(\alpha v_n+\beta|u_n|^2)(x,t)>0\},\]
then applying the techniques to get \eqref{1step1res}, we deduce that \footnote{One can utilize the same techniques in \eqref{s-21}-\eqref{1step1res}.}
\begin{equation*}
\begin{split}
\int^{\infty}_2\frac{1}{\eta(t)\lambda_1(t)}&\int_{\widetilde{E}_{-,t} }(\Re u_n(\alpha v_n+\beta|u_n|^2))
\chi_{\widetilde{E}_{-,t}}(x)w'(\frac{x}{\lambda_1(t)})g(\frac{x}{\lambda_2(t)})dxdt\\
\lesssim &\; C(\|u_{n0}\|_{H^1(\mathbb{R})},\|v_{n0}\|_{H^1(\mathbb{R})}),
\end{split}
\end{equation*}
and
\begin{equation*}
\begin{split}
\int^{\infty}_2\frac{1}{\eta(t)\lambda_1(t)}&\int_{\widetilde{E}_{+,t} }(\Re u_n(\alpha v_n+\beta|u_n|^2))\chi_{\widetilde{E}_{+,t}}(x)w'(\frac{x}{\lambda_1(t)})g(\frac{x}{\lambda_2(t)})dxdt\\
\lesssim & \;C(\|u_{n0}\|_{H^1(\mathbb{R})},\|v_{n0}\|_{H^1(\mathbb{R})}).
\end{split}
\end{equation*}
Recalling $\eta(t)\lambda_1(t)=t$, the above two estimates imply that
\begin{equation}\label{ba1}
\begin{split}
\int^{\infty}_2\frac{1}{t}&\int_{\mathbb{R} }|\Re u_n(\alpha v_n+\beta|u_n|^2)|w'(\frac{x}{\lambda_1(t)})g(\frac{x}{\lambda_2(t)})dxdt\\
\lesssim&\; C(\|u_{n0}\|_{H^1(\mathbb{R})},\|v_{n0}\|_{H^1(\mathbb{R})})\\
\lesssim &\; C(\|u_{0}\|_{H^1(\mathbb{R})},\|v_{0}\|_{H^1(\mathbb{R})}).
\end{split}
\end{equation}

We write
\begin{equation}\label{c2}
\begin{split}
\Re u(\alpha v+\beta |u|^2)=&\;\Re u_n(\alpha v+\beta |u|^2)+\Re (u-u_n)(\alpha v+\beta |u|^2)\\
=&\;\Re u_n(\alpha v_n+\beta |u_n|^2)+\Re u_n(\alpha (v-v_n)-\beta |u_n|^2+\beta|u|^2)\\
&\;+\Re (u-u_n)(\alpha v+\beta |u|^2).
\end{split}
\end{equation}
Then by combining the two {\it a priori} estimates
\begin{equation*}
\begin{split}
\sup _{t\in[0,+\infty)}\left(\|u(t)\|_{H^{1}}+\|v(t)\|_{H^{1}}\right)
&\leq C \left(\left\|u_{0}\right\|_{H^{1}},\left\|v_{0}\right\|_{H^{1}}\right),\\
\sup _{t\in[0,+\infty)}\left(\|u_n(t)\|_{H^{1}}+\|v_n(t)\|_{H^{1}}\right) &\leq C \left(\left\|u_{n0}\right\|_{H^{1}},\left\|v_{n0}\right\|_{H^{1}}\right)\lesssim C \left(\left\|u_{0}\right\|_{H^{1}},\left\|v_{0}\right\|_{H^{1}}\right),
\end{split}
\end{equation*}
and the embedding $H^1(\mathbb{R})\hookrightarrow C(\mathbb{R})$, we easily get from \eqref{c2} that
\begin{equation}\label{c1}
\begin{split}
&|\Re u(\alpha v+\beta |u|^2)|\\
\lesssim &\;|\Re u_n(\alpha v_n+\beta |u_n|^2)|+|u_n(\alpha (v-v_n)-\beta |u_n|^2+\beta|u|^2 )|+|(u-u_n)(\alpha v+\beta |u|^2)|\\
\lesssim &\;|\Re u_n(\alpha v_n+\beta |u_n|^2)|+\|u_n\|_{L^{\infty}_x}(|v-v_n|+(|u_n|+|u|)(||u_n|-|u||)|\\
&\;+(\|v\|_{L^{\infty}_x}+\|u\|^2_{L^{\infty}_x})|u-u_n|\\
\lesssim &\;|\Re u_n(\alpha v_n+\beta |u_n|^2)|+C(\|u_0\|_{H^1},\|v_0\|_{H^1})\left(|v-v_n|+|u-u_n|\right)\\
\lesssim &\;|\Re u_n(\alpha v_n+\beta |u_n|^2)|+\left(|v-v_n|+|u-u_n|\right).
\end{split}
\end{equation}

Claim 1, i.e.,
\begin{equation*}
(u_{n},v_{n})\rightarrow (u,v) \text{ in } (C(0,+\infty; H^1(\mathbb{R})))^2 \text{ as } n\rightarrow +\infty,
\end{equation*}
and the argument used to prove Claim 2 allow us to deduce that
\begin{equation}\label{ss-14}
\lim_{n\rightarrow+\infty}\int^{\infty}_{2}\int_{\mathbb{R}}\left(|v-v_n|+|u-u_n|\right)\frac{w'(\frac{x}{\lambda_1(t)})g(\frac{x}{\lambda_2(t)})}{t}dxdt=0.
\end{equation}

Thus, gathering together \eqref{c1}, \eqref{ba1}, \eqref{ss-14}, and \eqref{c3}, we obtain
\begin{equation}\label{ss-19}
\begin{split}
\int^{+\infty}_2\frac{1}{t}&\int_{\mathbb{R}}|\Re u(\alpha v+\beta |u|^2)|w'(\frac{x}{\lambda_1(t)})g(\frac{x}{\lambda_2(t)})dxdt\\
\leq&\;\int^{+\infty}_2\frac{1}{t}\int_{\mathbb{R}}|\Re u_n(\alpha v_n+\beta |u_n|^2)|w'(\frac{x}{\lambda_1(t)})g(\frac{x}{\lambda_2(t)})dxdt\\
&\;+\int^{+\infty}_2\frac{1}{t}\int_{\mathbb{R}}\left(|v-v_n|+|u-u_n|\right)w'(\frac{x}{\lambda_1(t)})g(\frac{x}{\lambda_2(t)})dxdt\\
\lesssim&\; C(\|u_{n0}\|_{H^1(\mathbb{R})},\|v_{n0}\|_{H^1(\mathbb{R})})+1\\
\lesssim &\; C(\|u_{0}\|_{H^1(\mathbb{R})},\|v_{0}\|_{H^1(\mathbb{R})}),
\end{split}
\end{equation}
which implies \eqref{2result}.

The proof of \eqref{bb1} follows the same lines as that of \eqref{2result}.
The main difference is that we do not consider  \eqref{baR} but
\begin{equation}\label{baI}
\partial_t\Re u_n+\partial^2_x\Im u_n=\Im u_n(\alpha v_n+\beta|u_n|^2),
\end{equation}
which can be obtained by taking the imaginary part on both sides of the first equation in \eqref{smooth-sol}.
Hence, we omit the details.
\end{proof}

The following corollary is a direct consequence of Proposition \ref{2prop2}.

\begin{corollary}\label{prop4}Under the same conditions  on the weight functions $w(x)$ and $g(x)$, and the indices $r_1$, $p_1$, $p_2$ in Proposition \ref{prop1}, we have
\begin{align}\label{J6}\int^{+\infty}_2\frac{1}{t} \int_{\mathbb {R}}| u(\alpha v+\beta |u|^2)| w'\left(\frac{x}{\lambda_{1}(t)}\right)g\left(\frac{x}{\lambda_{2}(t)}\right) d xdt\leq C(\|u_0\|_{H^1(\mathbb{R})},\|v_0\|_{H^1(\mathbb{R})}).
\end{align}
\end{corollary}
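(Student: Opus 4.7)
The plan is to deduce Corollary \ref{prop4} as an immediate consequence of Proposition \ref{2prop2} by decomposing the modulus $|u(\alpha v+\beta|u|^2)|$ into its real and imaginary parts. Since $\alpha v+\beta|u|^2$ is real-valued, we have
\begin{equation*}
u(\alpha v+\beta|u|^2)=\Re u\,(\alpha v+\beta|u|^2)+i\,\Im u\,(\alpha v+\beta|u|^2),
\end{equation*}
and therefore the pointwise inequality $|z|\leq |\Re z|+|\Im z|$ applied to the complex number $z=u(\alpha v+\beta|u|^2)$ yields
\begin{equation*}
|u(\alpha v+\beta|u|^2)|\leq |\Re u\,(\alpha v+\beta|u|^2)|+|\Im u\,(\alpha v+\beta|u|^2)|.
\end{equation*}

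Next, I would multiply this inequality by the nonnegative weight $\frac{1}{t}w'(x/\lambda_1(t))g(x/\lambda_2(t))$ (which is nonnegative because $w'=g>0$ and $g>0$) and integrate over $(2,+\infty)\times\mathbb{R}$. The linearity of the integral then splits the resulting bound into two pieces that are exactly the left-hand sides of estimates \eqref{2result} and \eqref{bb1} from Proposition \ref{2prop2}. Applying those two bounds gives
\begin{equation*}
\int^{+\infty}_2\frac{1}{t}\int_{\mathbb{R}}|u(\alpha v+\beta|u|^2)|w'\bigl(\tfrac{x}{\lambda_1(t)}\bigr)g\bigl(\tfrac{x}{\lambda_2(t)}\bigr)\,dx\,dt\leq 2C(\|u_0\|_{H^1},\|v_0\|_{H^1}),
\end{equation*}
which is \eqref{J6} up to renaming the constant.

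There is really no obstacle here: the only points that need a brief verification are that $\alpha v+\beta|u|^2$ is indeed real (so that $\Re$ and $\Im$ distribute over the product cleanly) and that the integrand is nonnegative so that Tonelli/Fubini and the triangle inequality commute with the integration. Both are immediate under the standing hypotheses $\alpha,\beta,\gamma\in\mathbb{R}$ and $v$ real, $u\in\mathbb{C}$. Hence the corollary follows in essentially one line from Proposition \ref{2prop2}, which justifies presenting it as a corollary rather than an independent result.
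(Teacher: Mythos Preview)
Your proof is correct and follows exactly the same approach as the paper: the paper's proof is the one-line observation that the corollary follows from Proposition \ref{2prop2} together with the inequality $|u|\leq |\Re u|+|\Im u|$.
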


\begin{proof} Corollary \ref{prop4} follows from  Proposition \ref{2prop2} and the fact  that $|u|<|\Re u|+|\Im u|$.
\end{proof}

\medskip

\begin{proposition}\label{prop2}Under the same conditions  on the weight functions $w(x)$ and $g(x)$, and the indices $r_1$, $p_1$, $p_2$ in Proposition \ref{prop1}, for the functional
\begin{equation*}
J_2(t)=\frac{\theta_2}{\eta(t)}\int_{\mathbb{R}}v^2(x,t)\,w(\frac{x}{\lambda_1(t)})g(\frac{x}{\lambda_2(t)})dx,\hskip15pt \theta_2\in \mathbb{R}^{+},
\end{equation*}
defined in \eqref{J2}, there exists a function $J_{2,int}(t)\in L^1(2,+\infty)$ such that
\begin{equation}\label{Equ2.60}
\begin{split}
\frac{3\theta_2}{t} \int_{\mathbb {R}} |\partial_xv|^2 w'\left(\frac{x}{\lambda_{1}(t)}\right)g\left(\frac{x}{\lambda_{2}(t)}\right) d x
=&\; -\frac{d}{d t} J_{2}(t)+J_{2,int}(t)\\
&\;+\frac{2\theta_2}{t}\int_{\mathbb{R}}\left(\frac{v^3}{3}-\gamma |u|^2v\right)w'(\frac{x}{\lambda_1(t)})g(\frac{x}{\lambda_2(t)})dx\\
&\;-\frac{2\theta_2\gamma}{\eta(t)}\int_{\mathbb{R}}|u|^2\partial_xvw(\frac{x}{\lambda_1(t)})g(\frac{x}{\lambda_2(t)})dx
\end{split}
\end{equation} holds for any $t\geq2$.
\end{proposition}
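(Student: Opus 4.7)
The plan is to compute $\frac{d}{dt}J_2(t)$ directly, substitute the KdV equation for $\partial_t v$, and then use integration by parts to extract the positive virial term $\frac{3\theta_2}{t}\int(\partial_xv)^2 w'(\tfrac{x}{\lambda_1})g(\tfrac{x}{\lambda_2})\,dx$, absorbing all remainder terms into $J_{2,int}(t)$. Differentiating the three time-dependent factors in $J_2(t)$ produces
\begin{equation*}
\frac{d}{dt}J_2(t)=-\frac{\theta_2 \eta'(t)}{\eta(t)^2}\int v^2 wg\,dx+\frac{2\theta_2}{\eta(t)}\int v\,\partial_t v\cdot wg\,dx+\frac{\theta_2}{\eta(t)}\int v^2\,\partial_t(wg)\,dx,
\end{equation*}
where $w=w(x/\lambda_1(t))$, $g=g(x/\lambda_2(t))$. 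The first term and third term are manifestly in $L^1(2,+\infty)$ after applying the uniform $H^1$ bound from Lemma \ref{result6} together with $\eta'/\eta^2\sim t^{-1-r_1}$ and the estimates $|\partial_t(wg)|\lesssim t^{-1}(|w'|\tfrac{|x|}{\lambda_1}g+w|g'|\tfrac{|x|}{\lambda_2})$; the crucial restriction $0<p_1<\tfrac{2}{p_2+2}$ ensures integrability. These pieces will be part of $J_{2,int}(t)$.

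For the dynamical contribution I will substitute $\partial_tv=-\partial_x^3v-v\partial_xv+\gamma\partial_x(|u|^2)$. For the dispersive piece the key identity, obtained by two integrations by parts, is
\begin{equation*}
-2\int v\,\partial_x^3v\,\phi\,dx=-3\int (\partial_xv)^2\partial_x\phi\,dx+\int v^2\,\partial_x^3\phi\,dx,
\end{equation*}
applied with $\phi=wg$. Since $\partial_x\phi=\frac{1}{\lambda_1}w'g+\frac{1}{\lambda_2}w g'$, the dominant contribution reads $-\frac{3\theta_2}{\eta\lambda_1}\int(\partial_xv)^2 w'g\,dx=-\frac{3\theta_2}{t}\int(\partial_xv)^2 w'g\,dx$, using $\eta\lambda_1=t$; the $\frac{1}{\lambda_2}$-piece and the $\int v^2\partial_x^3\phi$ term become part of $J_{2,int}(t)$ (the latter uses $|\partial_x^3\phi|\lesssim \lambda_1^{-3}e^{-|x|/\lambda_1}+\lambda_2^{-3}+\cdots$, hence integrates like $t^{-1-2p_1}$, which is in $L^1$).

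For the KdV nonlinearity I write $v^2\partial_xv=\partial_x(v^3/3)$, integrate by parts and keep the leading piece $\frac{2\theta_2}{3t}\int v^3 w'g\,dx$. Similarly, integrating by parts the Schr\"odinger coupling $\gamma\int v\,\partial_x(|u|^2)wg\,dx$ yields
\begin{equation*}
-\frac{2\theta_2\gamma}{\eta(t)}\int |u|^2\partial_xv\,wg\,dx-\frac{2\theta_2\gamma}{t}\int |u|^2 v\,w'g\,dx-\frac{2\theta_2\gamma}{\eta\lambda_2}\int |u|^2 v\,wg'\,dx,
\end{equation*}
the first term matches the last line of the identity, the second combines with $\frac{2\theta_2}{3t}\int v^3 w'g$ to form exactly $\frac{2\theta_2}{t}\int(\frac{v^3}{3}-\gamma|u|^2v)w'g\,dx$, and the third term, again thanks to the $1/\lambda_2$ factor, joins $J_{2,int}$.

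The main obstacle is the $L^1(2,+\infty)$ verification for $J_{2,int}(t)$, i.e.\ showing that every remainder term (from $\partial_t(wg)$, from the $\frac{1}{\lambda_2}$-derivatives of the weights, and from $\int v^2\partial_x^3(wg)$) decays faster than $1/t$ in the time variable after using $\sup_t\|v(t)\|_{H^1}\leq \Phi(\|u_0\|_{H^1},\|v_0\|_{H^1})$. Each contribution reduces to bounding $\int v^2\,\rho(t,x)\,dx$ for an explicit weight $\rho$, and the inequality $r_1+p_1=1$ with $0<p_1<\tfrac{2}{p_2+2}$ and $p_2>1$ guarantees that the resulting time factors $t^{-(1+r_1)}$, $t^{-(1+2p_1)}$, $t^{-(r_1+p_1p_2)}$ all lie in $L^1(2,+\infty)$. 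Collecting all of these rearrangements produces precisely the claimed identity \eqref{Equ2.60}.
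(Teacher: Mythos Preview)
Your argument is correct and follows essentially the same route as the paper: differentiate $J_2$, substitute the KdV equation for $\partial_t v$, and integrate by parts to isolate the $\frac{3\theta_2}{t}\int(\partial_xv)^2 w'g\,dx$ term, collecting all $1/\lambda_2$- and higher-derivative remainders into $J_{2,int}$. The only cosmetic differences are that the paper groups the nonlinear and coupling contributions via $\frac{v^2}{2}-\gamma|u|^2$ before integrating by parts (whereas you treat $v^2\partial_xv$ and $\gamma\partial_x(|u|^2)$ separately), and the paper stops the dispersive remainder at $\int v\,\partial_xv\,\partial_x^2(wg)\,dx$ rather than your $\int v^2\,\partial_x^3(wg)\,dx$; these differ by one further integration by parts and both yield $L^1(2,+\infty)$ terms.
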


\begin{proof} Performing the derivative in the variable $t$ it follows that
\begin{equation}\label{a2}
\begin{split}
\frac{d}{dt} J_{2}(t)=&\; \underbrace{\frac{-\theta_2 \eta^{'}(t)}{(\eta(t))^2}\int_{\mathbb{R}}v^2w(\frac{x}{\lambda_1(t)})g(\frac{x}{\lambda_2(t)})dx}_{J_{2,1}(t)}+\underbrace{\frac{\theta_2}{\eta(t)}\int_{\mathbb{R}}v^2\partial_t(w(\frac{x}{\lambda_1(t)})g(\frac{x}{\lambda_2(t)}))dx}_{J_{2,2}(t)}\\
&\;+\frac{2\theta_2}{\eta(t)}\int_{\mathbb{R}}v\partial_tvw(\frac{x}{\lambda_1(t)})g(\frac{x}{\lambda_2(t)})dx.
\end{split}
\end{equation}

Using  $\frac{\eta^{'}(t)}{\eta(t)}\backsim\frac{\lambda_1^{'}(t)}{\lambda_1(t)}\backsim\frac{\lambda_2^{'}(t)}{\lambda_2(t)}\backsim \frac{1}{t} $ and Lemma \ref{result6} we can easily obtain
\begin{equation*}
J_{2,1}(t), J_{2,2}(t)\in L^1(2,+\infty).
\end{equation*}

Next, we focus on the last term in the right hand side of \eqref{a2}.
Using the equation
\begin{equation*}
\partial_tv=-\partial^3_xv-v\partial_xv+\gamma \partial_x(|u|^2),
\end{equation*}
integrating by parts and apply Lemma \ref{result6} for several times, it follows
\begin{equation}\label{a3}
\begin{split}
\frac{2\theta_2}{\eta(t)}&\int_{\mathbb{R}}v\partial_tvw(\frac{x}{\lambda_1(t)})g(\frac{x}{\lambda_2(t)})dx\\
=&\;\frac{2\theta_2}{\eta(t)}\int_{\mathbb{R}}(\frac{v^2}{2}-\gamma |u|^2)\partial_x[vw(\frac{x}{\lambda_1(t)})g(\frac{x}{\lambda_2(t)})]dx- \frac{2\theta_2}{\eta(t)}\int_{\mathbb{R}}v\partial^3_xvw(\frac{x}{\lambda_1(t)})g(\frac{x}{\lambda_2(t)})dx\\
=&\; \frac{2\theta_2}{\eta(t)}\int_{\mathbb{R}}(\frac{v^3}{2}-\gamma |u|^2v)\partial_x[w(\frac{x}{\lambda_1(t)})g(\frac{x}{\lambda_2(t)})]dx-\frac{2\theta_2}{\eta(t)}\int_{\mathbb{R}}v\partial^3_xvw(\frac{x}{\lambda_1(t)})g(\frac{x}{\lambda_2(t)})dx\\
&\;-\frac{2\theta_2\gamma}{\eta(x)}\int_{\mathbb{R}}|u|^2\partial_xvw(\frac{x}{\lambda_1(t)})g(\frac{x}{\lambda_2(t)})dx+\frac{2\theta_2}{\eta(x)}\int_{\mathbb{R}}\frac{v^2}{2}\partial_xvw(\frac{x}{\lambda_1(t)})g(\frac{x}{\lambda_2(t)})dx\\
=&\; \frac{2\theta_2}{\eta(t)}\int_{\mathbb{R}}(\frac{v^3}{3}-\gamma |u|^2v)\partial_x[w(\frac{x}{\lambda_1(t)})g(\frac{x}{\lambda_2(t)})]dx-\frac{2\theta_2}{\eta(t)}\int_{\mathbb{R}}v\partial^3_xvw(\frac{x}{\lambda_1(t)})g(\frac{x}{\lambda_2(t)})dx\\
&\;-\frac{2\theta_2\gamma}{\eta(x)}\int_{\mathbb{R}}|u|^2\partial_xvw(\frac{x}{\lambda_1(t)})g(\frac{x}{\lambda_2(t)})dx,
\end{split}
\end{equation}
where
\begin{equation*}
\begin{split}
\frac{2\theta_2}{\eta(t)}\!\!\int_{\mathbb{R}}(\frac{v^3}{3}-\gamma |u|^2v)\partial_x[w(\frac{x}{\lambda_1(t)})g(\frac{x}{\lambda_2(t)})]dx
&= \frac{2\theta_2}{t}\!\int_{\mathbb{R}}(\frac{v^3}{3}-\gamma |u|^2v)w'(\frac{x}{\lambda_1(t)})g(\frac{x}{\lambda_2(t)})dx\\
&\hskip10pt+\underbrace{\frac{2\theta_2}{\eta(t)\lambda_2(t)}\!\!\int_{\mathbb{R}}(\frac{v^3}{3}-\gamma|u|^2v)w(\frac{x}{\lambda_1(t)})g^{'}(\frac{x}{\lambda_2(t)})dx}_{J_{2,3}(t)},
\end{split}
\end{equation*}
and
\begin{equation*}
\begin{split}
-\frac{2\theta_2}{\eta(t)}\int_{\mathbb{R}}&v\partial^3_xvw(\frac{x}{\lambda_1(t)})g(\frac{x}{\lambda_2(t)})dx\\
=&\underbrace{-\frac{3\theta_2}{\eta(t)\lambda_2(t)}\int_{\mathbb{R}}|\partial_xv|^2w(\frac{x}{\lambda_1(t)})g^{'}(\frac{x}{\lambda_2(t)})dx-\frac{2\theta_2}{\eta(t)}\int_{\mathbb{R}}v\partial_x v\partial^2_x(w(\frac{x}{\lambda_1(t)})g(\frac{x}{\lambda_2(t)}))dx}_{J_{2,4}(t)}\\
&\;-\frac{3\theta_2}{t}\int_{\mathbb{R}}|\partial_xv|^2w'(\frac{x}{\lambda_1(t)})g(\frac{x}{\lambda_2(t)})dx,
\end{split}
\end{equation*}
with\[J_{2,3}(t)\in L^1(2,+\infty), J_{2,4}(t)\in L^1(2,+\infty).\]

Substituting \eqref{a3} into \eqref{a2}, we immediately get the identity \eqref{Equ2.60} in Proposition \ref{prop2} with
\begin{equation*}
J_{2,int}(t)=J_{2,1}(t)+J_{2,2}(t)+J_{2,3}(t)+J_{2,4}(t)\in L^1(2,+\infty).
\end{equation*}
\end{proof}


In the next proposition we will analyze the following quantity
\begin{equation*}
\int_{\mathbb{R}}{\Im}[u(x,t)\partial_x\overline{u}(x,t)]w(\frac{x}{\lambda_1(t)})g(\frac{x}{\lambda_2(t)})dx
\end{equation*}
to derive an useful identity in our arguments. We shall mention that this functional is somehow related to the invariant \eqref{cons2}.

\begin{proposition}\label{prop3} Under the same conditions  on the weight functions $w(x)$ and $g(x)$, and the indices $r_1$,$p_1$,$p_2$ in Proposition \ref{prop1}, for the functional
\[ J_3(t)=\frac{\theta_3}{\eta(t)}\int_{\mathbb{R}}{\Im}[u(x,t)\partial_x\overline{u}(x,t)]w(\frac{x}{\lambda_1(t)})g(\frac{x}{\lambda_2(t)})dx,\quad \theta_3\in \mathbb{R}^{+},\]
defined in \eqref{J3} where $\Im (u(x,t))$ denotes the imaginary part of the function $u(x,t)$, there exists a function $J_{3,int}(t)\in L^1(2,+\infty)$ such that
\begin{equation}\label{Equ2.6}
\begin{split}
\frac{2\theta_3}{t} \int_{\mathbb {R}}& |\partial_xu|^2 w'\left(\frac{x}{\lambda_{1}(t)}\right)g\left(\frac{x}{\lambda_{2}(t)}\right) d x+ \frac{\beta \theta_3}{2t}\int_{\mathbb{R}}|u|^4w'(\frac{x}{\lambda_1(t)})g(\frac{x}{\lambda_2(t)})dx\\
=& -\frac{d}{d t} J_{3}(t)+J_{3,int}(t)+\frac{\theta_3\alpha}{\eta(t)}\int_{\mathbb{R}} |u|^2\partial_xvw(\frac{x}{\lambda_1(t)})g(\frac{x}{\lambda_2(t)})dx
\end{split}
\end{equation} holds for any $t\geq2$.
\end{proposition}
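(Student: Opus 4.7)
The plan is to mimic the proof of Proposition \ref{prop2}: differentiate $J_3(t)$ in $t$, use the Schr\"odinger equation to rewrite $\partial_t\Im(u\partial_x\overline{u})$ as a divergence plus an $\alpha|u|^2\partial_xv$ coupling, integrate by parts in $x$, and isolate the two main weighted terms from a remainder that lies in $L^1(2,+\infty)$. Setting $\phi(x,t):=w(x/\lambda_1(t))g(x/\lambda_2(t))$, I would first compute
\begin{equation*}
\frac{d}{dt}J_3(t)=-\frac{\theta_3\eta'(t)}{\eta(t)^2}\int_{\mathbb{R}}\Im(u\partial_x\overline{u})\phi\,dx+\frac{\theta_3}{\eta(t)}\int_{\mathbb{R}}\Im(u\partial_x\overline{u})\partial_t\phi\,dx+\frac{\theta_3}{\eta(t)}\int_{\mathbb{R}}\partial_t\Im(u\partial_x\overline{u})\phi\,dx.
\end{equation*}
Since $\eta'/\eta$ and $\lambda_i'/\lambda_i$ are of order $1/t$ and Lemma \ref{result6} provides a uniform $H^1$ bound, the first two summands admit estimates of the same type as $J_{2,1}$ and $J_{2,2}$ from the proof of Proposition \ref{prop2}, and will be absorbed into $J_{3,int}(t)\in L^1(2,+\infty)$.

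The crux of the argument is the pointwise identity
\begin{equation*}
\partial_t\Im(u\partial_x\overline{u})=2\partial_x(|\partial_xu|^2)-\tfrac{1}{2}\partial_x^{3}(|u|^2)+\alpha|u|^2\partial_xv+\tfrac{\b}{2}\partial_x(|u|^4),
\end{equation*}
which I would derive by differentiating $u\partial_x\overline{u}$ in time, substituting $\partial_tu=i\partial_x^2u-i(\alpha uv+\b u|u|^2)$ together with its conjugate, and taking the imaginary part. The linear piece reduces via $\Re(u\partial_x^2\overline{u})=\tfrac{1}{2}\partial_x^2|u|^2-|\partial_xu|^2$ to $2\partial_x|\partial_xu|^2-\tfrac{1}{2}\partial_x^3|u|^2$, while the combination $\Re[-F\partial_x\overline{u}+u\partial_x\overline{F}]$ with $F=\alpha uv+\b u|u|^2$ collapses, after the $u\partial_x\overline{u}$ pieces cancel between the two summands, precisely to $\alpha|u|^2\partial_xv+\tfrac{\b}{2}\partial_x(|u|^4)$.

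Inserting this identity into the third term of $\frac{d}{dt}J_3(t)$ and integrating by parts in $x$, using $\partial_x\phi=\tfrac{1}{\lambda_1}w'g+\tfrac{1}{\lambda_2}wg'$ together with $\eta(t)\lambda_1(t)=t$, I obtain the two main contributions $-\tfrac{2\theta_3}{t}\int|\partial_xu|^2w'(x/\lambda_1)g(x/\lambda_2)\,dx-\tfrac{\b\theta_3}{2t}\int|u|^4w'(x/\lambda_1)g(x/\lambda_2)\,dx$, together with the coupling $\tfrac{\theta_3\alpha}{\eta(t)}\int|u|^2\partial_xv\,\phi\,dx$ which is kept intact as the last term in \eqref{Equ2.6}. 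Every remaining summand acquires a prefactor of the form $1/(\eta\lambda_2)$, $1/(\eta\lambda_1^3)$, $1/(\eta\lambda_1^2\lambda_2)$, or similar, each with exponent strictly greater than $1$ under $p_1+r_1=1$ and $p_2>1$, so combined with the uniform bounds $\|\partial_xu(t)\|_{L^2}+\|u(t)\|_{L^4}^4\lesssim 1$ supplied by Lemma \ref{result6} (the second via $H^1\hookrightarrow L^\infty$) they all lie in $L^1(2,+\infty)$ and are collected into $J_{3,int}$. Rearranging yields \eqref{Equ2.6}. The main obstacle will be the pointwise identity above: one must track real and imaginary parts carefully and observe the cancellation that turns the $\b$-nonlinearity into a clean $x$-derivative (so that it integrates by parts against $\phi$ to produce the $|u|^4$ term on the left-hand side), while the $\alpha$-nonlinearity survives only as the $\partial_xv$ coupling predicted by the statement; once this identity is in hand, the $L^1$ bookkeeping is completely parallel to the one carried out for $J_2$ in Proposition \ref{prop2}.
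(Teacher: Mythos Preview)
Your proposal is correct and follows essentially the same route as the paper's proof: differentiate $J_3$, derive the pointwise identity for $\partial_t\Im(u\partial_x\overline{u})$ from the Schr\"odinger equation, integrate by parts using $\eta(t)\lambda_1(t)=t$, and absorb all remaining terms into $J_{3,int}\in L^1(2,+\infty)$ via Lemma~\ref{result6} and the index restrictions. The only cosmetic difference is that the paper writes the linear piece as $-\partial_x^2\Re(u\partial_x\overline{u})+2\partial_x|\partial_xu|^2$ (integrating by parts twice to get the remainder $J_{3,22}$) whereas you write the equivalent $-\tfrac12\partial_x^3|u|^2+2\partial_x|\partial_xu|^2$; both lead to integrable remainders under $p_1+r_1=1$, $p_2>1$.
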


\begin{proof}
It can be directly calculated that
\begin{equation}\label{a4}
\begin{split}
\frac{d}{d t} J_{3}(t)=&\; \underbrace{\theta_3\int_{\mathbb{R}}{\Im }[u\partial_x\overline{u}]\partial_t\left[\frac{1}{\eta(t)}w(\frac{x}{\lambda_1(t)})g(\frac{x}{\lambda_2(t)})\right]dx }_{J_{3,1}(t)}\\
&\; +\underbrace{\frac{\theta_3}{\eta(t)}\int_{\mathbb{R}}\partial_t {\Im}[u\partial_x\overline{u}]w(\frac{x}{\lambda_1(t)})g(\frac{x}{\lambda_2(t)})dx}_{J_{3,2}(t)},
\end{split}
\end{equation}
where
\begin{equation}
\begin{split}
 \partial_t\left[\frac{1}{\eta(t)}w(\frac{x}{\lambda_1(t)})g(\frac{x}{\lambda_2(t)})\right]
 =&\;-\frac{\eta^{'}(t)}{\eta^2(t)}w(\frac{x}{\lambda_1(t)})g(\frac{x}{\lambda_2(t)})-\frac{\lambda_1^{'}(t)}{\eta(t)\lambda_1(t)}w'(\frac{x}{\lambda_1(t)})\frac{x}{\lambda_1(t)}g(\frac{x}{\lambda_2(t)})\\
 &\;-\frac{\lambda_2^{'}(t)}{\eta(t)\lambda_2(t)}w(\frac{x}{\lambda_1(t)})g^{'}(\frac{x}{\lambda_2(t)})\frac{x}{\lambda_2(t)}.
\end{split}
\end{equation}

Using $\frac{\eta^{'}(t)}{\eta(t)}\backsim\frac{\lambda_1^{'}(t)}{\lambda_1(t)}\backsim\frac{\lambda_2^{'}(t)}{\lambda_2(t)}\backsim \frac{1}{t} $, Lemma \ref{result6} and repeating the same techniques in \eqref{s-9}, one can directly check that \[J_{3,1}(t)\in L^1(2,+\infty).\]

Recalling the structure $i\partial_tu+\partial^2_xu=\alpha u v+\beta u|u|^2$ where $u(x,t)\in \mathbb{C}$, $v(x,t), \alpha, \beta \in \mathbb{R}$, then it can be directly computed that
\begin{equation}\label{bb2}
i\partial_tu\partial_x\overline{u}+\partial^2_xu\partial_x\overline{u}=\alpha u\partial_x\overline{u} v+\beta u\partial_x\overline{u}|u|^2,
\end{equation}
\begin{equation*}
-i\partial_t\overline{u}+\partial^2_x\overline{u}=\alpha \overline{u} v+\beta \overline{u}|u|^2,\]
and
\begin{equation}\label{bb3}
-iu\partial_t\partial_x\overline{u}+u\partial^3_x\overline{u}=\alpha u\partial_x(\overline{u} v)+\beta u \partial_x(\overline{u}|u|^2)=\alpha u\partial_x\overline{u} v+\alpha u\overline{u}\partial_xv+\beta u\partial_x\overline{u}|u|^2+\beta u\overline{u}\partial_x|u|^2.
\end{equation}
Now, subtracting  \eqref{bb3} from \eqref{bb2}, using the fact $u\overline{u}=|u|^2$, we have
\[i(\partial_tu\partial_x\overline{u}+u\partial_t\partial_x\overline{u})+(\partial^2_xu\partial_x\overline{u}-u\partial^3_x\overline{u})=-\alpha |u|^2\partial_xv-\beta |u|^2\partial_x|u|^2,\]i.e.,
\begin{equation}\label{bb4}
\partial_t(u\partial_x\overline{u})=(\partial_tu\partial_x\overline{u}+u\partial_t\partial_x\overline{u})=i(\partial^2_xu\partial_x\overline{u}-u\partial^3_x\overline{u})+i(\alpha |u|^2\partial_xv+\beta |u|^2\partial_x|u|^2).
\end{equation}
Since
\begin{equation*}
\Im [i(\partial^2_xu\partial_x\overline{u}-u\partial^3_x\overline{u})]=\Re(\partial^2_xu\partial_x\overline{u}-u\partial^3_x\overline{u}),\] \[\partial_x(|u|^2)=2\Re(u\partial_x\overline{u})=2\Re(\overline{u}\partial_xu),
\end{equation*}
and
\begin{align*}
\partial^2_x\Re (u\partial_x\overline{u})&=\Re [\partial^2_x(u\partial_x\overline{u})] \\
&=\Re (u\partial^3_x\overline{u})+2\Re(\partial_xu\partial^2_x\overline{u})+\Re(\partial_x\overline{u}\partial^2_xu)\\
&=\Re (u\partial^3_x\overline{u})+\frac{3}{2}\partial_x(|\partial_xu|^2),
\end{align*}
after taking the imaginary part on both  sides of \eqref{bb4}, it follows
\begin{equation}\label{bb5}
\begin{split}
\partial_t\Im(u\partial_x\overline{u})&=\Re(-u\partial^3_x\overline{u})+\frac{\partial_x(|\partial_xu|^2 )}{2}+\alpha|u|^2\partial_xv+\beta |u|^2\partial_x|u|^2\nonumber\\
&=-\partial^2_x\Re (u\partial_x\overline{u})+2\partial_x(|\partial_xu|^2)+\alpha|u|^2\partial_xv
+ \frac{\beta}{2} \partial_x|u|^4.
\end{split}
\end{equation}
Next, substituting the identity of $\partial_t\Im[u\partial_x\overline{u}]$ into $J_{3,2}(t)$, using integration by parts formula, and the fact $\eta(t)\lambda_1(t)=t$ for several times, we get
\begin{equation}
\begin{split}
J_{3,2}(t)=&\frac{\theta_3}{\eta(t)}\int_{\mathbb{R}}\partial_t {\Im}[u\partial_x\overline{u}]w(\frac{x}{\lambda_1(t)})g(\frac{x}{\lambda_2(t)})dx\\
=&-\frac{2\theta_3}{t}\int_{\mathbb{R}}|\partial_xu|^2w'(\frac{x}{\lambda_1(t)})g(\frac{x}{\lambda_2(t)})dx+\frac{\theta_3 \alpha}{\eta(t)}\int_{\mathbb{R}}|u|^2\partial_xvw(\frac{x}{\lambda_1(t)})g(\frac{x}{\lambda_2(t)})dx\\
 &\; -\frac{\beta \theta_3}{2t}\int_{\mathbb{R}}|u|^4w'(\frac{x}{\lambda_1(t)})g(\frac{x}{\lambda_2(t)})dx\\
 &\;\underbrace{-\frac{2\theta_3}{\eta(t)\lambda_2(t)}\int_{\mathbb{R}}|\partial_xu|^2w(\frac{x}{\lambda_1(t)})g^{'}(\frac{x}{\lambda_2(t)})dx}_{J_{3,21}(t)}\\
 &\;\underbrace{-\frac{\theta_3}{\eta(t)}\int_{\mathbb{R}}\text{Re}[u\partial_x\overline{u}]\partial^2_x\left[w(\frac{x}{\lambda_1(t)})g(\frac{x}{\lambda_2(t)})\right]dx}_{J_{3,22}(t)}\\
 &\;\underbrace{-\frac{\beta\theta_3}{2\eta(t)\lambda_2(t)}\int_{\mathbb{R}}|u|^4w(\frac{x}{\lambda_1(t)})g^{'}(\frac{x}{\lambda_2(t)})dx}_{J_{3,23}(t)},
\end{split}
\end{equation}
with
\begin{equation*}
J_{3,21}(t)\in L^1(2,+\infty),\quad J_{3,22}(t)\in L^1(2,+\infty),\quad J_{3,23}(t)\in L^1(2,+\infty).
\end{equation*}
Finally, substituting the identities of $J_{3,1}(t)$
and $J_{3,2}(t)$ into \eqref{a4}, we get the identity \eqref{Equ2.6} in Proposition \ref{prop3} with $J_{3,int}=J_{3,1}(t)+J_{3,21}(t)+J_{3,22}(t)+J_{3,23}(t)\in L^1(2,+\infty)$.
\end{proof}

\section{Proof of Theorems \ref{th1} and \ref{th2}}
\hspace*{\parindent}  With Propositions \ref{prop1}, \ref{prop2} and \ref{prop3} in hand, we are ready to prove Theorems \ref{th1} and \ref{th2}.

\medskip

\noindent\textbf{Proof of Theorem \ref{th1}:}
Since $w'(x)=g(x)\backsim e^{-|x|}$, $\lambda_1(t)=t^{p_1}$ and $\lambda_2(t)=t^{p_1p_2}$ with $p_2> 1$,  it follows from Proposition \ref{prop1} and Corollary \ref{prop4} that
 \begin{equation}\label{Equ3.7}
\begin{split}
\int^{\infty}_{2} &\frac{1}{t}\int_{|x|\lesssim t^{p_1}} |\frac{v^2}{2}-\gamma|u|^2| d x d t <\infty,
\end{split}
\end{equation}and
\[\int^{+\infty}_2\frac{1}{t} \int_{|x|\lesssim t^{p_1}}| u(\alpha v+\beta |u|^2)| d xdt<\infty,
\]respectively.

Due to $\frac{1}{t}\notin L^{1}(2,+\infty)$, there exists a positive time sequence $(t_{n})_{n\geq1}$ such that
\begin{equation*}
\begin{split}
\lim _{t_n\rightarrow \infty} \inf \int_{|x|\lesssim t^{p_1}_n} |\frac{v^2}{2}-\gamma|u|^2| d x=\lim _{t_n\rightarrow \infty} \inf \int_{|x|\lesssim t^{p_1}_n} | u(\alpha v+\beta |u|^2)|d x=0,
\end{split}
\end{equation*}
which implies Theorem \ref{th1}.
\qed

\medskip

\noindent\textbf{Proof of Theorem \ref{th2}}: Adding \eqref{Equ2.60} in Proposition \ref{prop2} and \eqref{Equ2.6} in Proposition \ref{prop3} together, we obtain the following identity which is essential in our analysis.

 \begin{equation}\label{bb7}
 \begin{split}
\frac{3\theta_2}{t} &\int_{\mathbb {R}}|\partial_xv|^2 w'(\frac{x}{\lambda_1(t)})g(\frac{x}{\lambda_2(t)}) d x+\frac{2\theta_3}{t} \int_{\mathbb {R}}|\partial_xu|^2 w'(\frac{x}{\lambda_1(t)})g(\frac{x}{\lambda_2(t)}) d x \\
=&\;-\frac{d}{dt}(J_2(t)+J_3(t))+J_{2,int}(t)+J_{3,int}(t)+\frac{2\theta_2}{t}\int_{\mathbb{R}}(\frac{v^3}{3}-\gamma |u|^2v)w'(\frac{x}{\lambda_1(t)})g(\frac{x}{\lambda_2(t)})dx\\
&\;-\frac{\beta\theta_3}{2t}\int_{\mathbb{R}}|u|^4w'(\frac{x}{\lambda_1(t)})g(\frac{x}{\lambda_2(t)})dx+(-\frac{2\theta_2\gamma}{\eta(t)}+\frac{\theta_3\alpha}{\eta(t)})\int_{\mathbb{R}} |u|^2\partial_xvw(\frac{x}{\lambda_1(t)})g(\frac{x}{\lambda_2(t)})dx.
\end{split}
\end{equation}

We first notice that the last term in the right hand side introduces an extra difficulty. The term as it stands cannot be controlled. For instance, integration by parts as usual does not work because the nice properties satisfied for the weight functions can be lost.

\medskip

To eliminate this difficult term we choose,
\begin{equation}\label{bb6}
\theta_2>0 \hskip10pt \text{and} \hskip10pt \theta_3=\frac{2\theta_2\gamma}{\alpha}.
\end{equation}

With this choice it follows from \eqref{bb7} that
\begin{equation}\label{bb8}
\begin{split}
&\frac{3\theta_2}{t} \int_{\mathbb {R}}|\partial_xv|^2 w'(\frac{x}{\lambda_1(t)})g(\frac{x}{\lambda_2(t)}) d x+\frac{2\theta_3}{t} \int_{\mathbb {R}}|\partial_xu|^2 w'(\frac{x}{\lambda_1(t)})g(\frac{x}{\lambda_2(t)}) d x \\
=&-\frac{d}{dt}(J_2(t)+J_3(t))+J_{2,int}(t)+J_{3,int}(t)+\frac{2\theta_2}{t}\int_{\mathbb{R}}(\frac{v^3}{3}-\gamma |u|^2v)w'(\frac{x}{\lambda_1(t)})g(\frac{x}{\lambda_2(t)})dx\\
&-\frac{\beta\theta_3}{2t}\int_{\mathbb{R}}|u|^4w'(\frac{x}{\lambda_1(t)})g(\frac{x}{\lambda_2(t)})dx.
\end{split}
\end{equation}


By using Lemma \ref{result6}, the Sobolev embedding and Proposition \ref{prop1},  we deduce the following chain of
inequalities
\begin{equation}\label{bb9}
\begin{split}
\int^{+\infty}_2\frac{1}{t}&\int_{\mathbb{R}}|v(\frac{v^2}{2}-\gamma |u|^2)|\;w'(\frac{x}{\lambda_1(t)})g(\frac{x}{\lambda_2(t)})dxdt\\
\leq&\|v\|_{L^{\infty}_{x,t}}\int^{+\infty}_2\frac{1}{t}\int_{\mathbb{R}}\big|\frac{v^2}{2}-\gamma |u|^2\big|w'(\frac{x}{\lambda_1(t)})g(\frac{x}{\lambda_2(t)})dxdt\\
\leq & \;C(\|u_0\|_{H^1(\mathbb{R})}, \|v_0\|_{H^1(\mathbb{R})} )\int^{+\infty}_2\frac{1}{t}\int_{\mathbb{R}}
|\frac{v^2}{2}-\gamma |u|^2|\,w'(\frac{x}{\lambda_1(t)})g(\frac{x}{\lambda_2(t)})dxdt\\
< & +\infty.
\end{split}
\end{equation}

On the other hand, combining Lemma \ref{result6}, the Sobolev embedding, and Corollary \ref{prop4}, it  leads to
\begin{equation}\label{bb10}
\begin{split}
\int^{+\infty}_2\frac{1}{t}&\int_{\mathbb{R}}|u|^2|\alpha v+\beta |u|^2|\;w'(\frac{x}{\lambda_1(t)})g(\frac{x}{\lambda_2(t)})dxdt\\
\leq&\;C(\|u_0\|_{H^1(\mathbb{R})}, \|v_0\|_{H^1(\mathbb{R})} )\int^{+\infty}_2\frac{1}{t}\int_{\mathbb{R}}|u(\alpha v+\beta |u|^2)|w'(\frac{x}{\lambda_1(t)})g(\frac{x}{\lambda_2(t)})dxdt\\
< &+\infty.
\end{split}
\end{equation}

The following identity plays a key role to establish the desired estimate,
\begin{equation}\label{cc1}
\frac{v^3}{3}-\gamma |u|^2v=\frac{2v}{3}(\frac{v^2}{2}-\gamma |u|^2)-\frac{\gamma |u|^2}{3\alpha}(\alpha v+\beta |u|^2)+\frac{\gamma \beta |u|^4}{3\alpha}.
\end{equation}

We first substitute \eqref{cc1} into \eqref{bb8}, then integrating over $(2,+\infty)$ with respect to the time variable
on both sides of \eqref{bb8} and using $\theta_3=\frac{2\theta_2\gamma}{\alpha}$, we obtain
\begin{equation}\label{bb8-b}
\begin{split}
\int^{+\infty}_{2}\frac{1}{t} &\int_{\mathbb {R}}\left(3\theta_2|\partial_xv|^2+\frac{4\theta_2\gamma}{\alpha}|\partial_xu|^2\right) w'(\frac{x}{\lambda_1(t)})g(\frac{x}{\lambda_2(t)}) d xdt\\
=&-(J_2(t)+J_3(t))+\int^{+\infty}_2(J_{2,int}(t)+J_{3,int}(t))\,dt\\
&+\frac{4\theta_2}{3t}\int^{+\infty}_2\frac{1}{t}\int_{\mathbb{R}}v(\frac{v^2}{2}-\gamma |u|^2)\;w'(\frac{x}{\lambda_1(t)})
g(\frac{x}{\lambda_2(t)})dxdt\\
&-\frac{2\gamma\theta_2}{3\alpha}\int^{+\infty}_2\frac{1}{t}\int_{\mathbb{R}}|u|^2(\alpha v+\beta |u|^2)\;w'(\frac{x}{\lambda_1(t)})g(\frac{x}{\lambda_2(t)})dxdt\\
&-\frac{\beta\gamma\theta_2}{3\alpha}\int_2^{\infty}\frac{1}{t}\int_{\mathbb{R}}|u|^4w'(\frac{x}{\lambda_1(t)})g(\frac{x}{\lambda_2(t)})dx.
\end{split}
\end{equation}

Taking into account that  $|J_{i}(t)|<\infty$, $J_{i,int}(t)\in L^1(2,+\infty)$, $i=2,3$, and the estimates \eqref{bb9} and \eqref{bb10}, we can deduce that there exists a constant $C>0$ such that
 \begin{equation}\label{m2}
 \begin{split}
\int^{+\infty}_{2}\frac{1}{t} &\int_{\mathbb {R}}\left(3\theta_2|\partial_xv|^2+\frac{4\theta_2\gamma}{\alpha}|\partial_xu|^2\right) w'(\frac{x}{\lambda_1(t)})g(\frac{x}{\lambda_2(t)}) d xdt \\
\leq &\;C+\frac{-\beta\theta_2\gamma}{3\alpha}\int^{+\infty}_{2}\frac{1}{t}\int_{\mathbb{R}}|u|^4w'(\frac{x}{\lambda_1(t)})g(\frac{x}{\lambda_2(t)})dxdt.
\end{split}
\end{equation}
The assumptions
\begin{equation*}
\label{m3}\beta>0,\quad \theta_2>0,\quad \theta_3>0,
\end{equation*}
allow us to conclude that
 \begin{equation}\label{m4}
 \int^{+\infty}_{2}\frac{1}{t} \int_{\mathbb {R}}\left(|\partial_xv|^2+ |\partial_xu|^2+|u|^4\right) w'(\frac{x}{\lambda_1(t)})g(\frac{x}{\lambda_2(t)}) d xdt <\infty.
 \end{equation}
This in turn implies \eqref{decay2} in Theorem \ref{th2} since $\frac{1}{t}\notin L^1(2,+\infty)$.

\medskip

Next, we treat \eqref{decay3} in Theorem \ref{th2}.  We  prove first that \eqref{decay3} holds for $p=4$ and $p=3$.
\medskip

Lemma \ref{result6}, the Sobolev embedding, and Proposition \ref{prop1} combined with the identity
$$
v^4=4\gamma^2|u|^4+4(\frac{v^2}{2}+\gamma |u|^2)(\frac{v^2}{2}-\gamma |u|^2)
$$
and \eqref{m4}, lead to
\begin{equation}\label{b1}
\int^{\infty}_{2}\frac{1}{t}\int_{\mathbb{R}}(|u|^4+v^4) w'(\frac{x}{\lambda_1(t)})g(\frac{x}{\lambda_2(t)}) d xdt <\infty.\end{equation}

On the other hand,  the identity
\begin{equation*}
|u|^3=-\frac{1}{\gamma}|u|(\frac{v^2}{2}-\gamma |u|^2)+\frac{v}{2\gamma \alpha}|u|(\alpha v+\beta |u|^2)-\frac{\beta}{2\gamma \alpha}v|u|^3,
\end{equation*}
and the inequality  $v|u|^3\leq v^4+|u|^4$ combined with Proposition \ref{prop1} and Corollary \ref{prop4}, yield
\begin{equation}\label{b2}
\int^{\infty}_{2}\frac{1}{t}\int_{\mathbb{R}}|u|^3 w'(\frac{x}{\lambda_1(t)})g(\frac{x}{\lambda_2(t)}) \, dxdt <\infty.\end{equation}
This in turn implies that
\begin{equation}\label{b3}
\int^{\infty}_{2}\frac{1}{t} \int_{\mathbb{R}}|uv| w'(\frac{x}{\lambda_1(t)})g(\frac{x}{\lambda_2(t)}) d xdt <\infty,\end{equation}
since $uv=\alpha^{-1}u(\alpha v+\beta |u|^2)-\alpha^{-1}\beta u |u|^2$ combined with Corollary \ref{prop4}
and \eqref{b2}.

Given $m\in \mathbb{R}^{+}$, we obtain the following elementary inequalities,
\begin{equation}\label{ineq-new-1}
\begin{split}
|v|^{2+m}&=2|v|^m(\frac{|v|^2}{2}-\gamma |u|^2)+2\gamma |v|^m|u|^2\\
&\leq 2\sup_{x,t}|v|^m |\frac{v^2}{2}-\gamma |u|^2|+2\gamma(\epsilon^{\frac{2+m}{m}}|v|^{2+m}+\frac{1}{\epsilon^{\frac{2+m}{2}}}|u|^{2+m})
\end{split}
\end{equation}
and
\begin{equation}\label{ineq-new-2}
\begin{split}
|u|^{2+m}&=\frac{-|u|^m}{\gamma}(\frac{|v|^2}{2}-\gamma |u|^2)+\frac{|u|^m|v|^2}{2\gamma} \nonumber\\
&\leq \frac{\sup_{x,t}|u|^m}{|\gamma|} |\frac{v^2}{2}-\gamma |u|^2|+\frac{1}{2\gamma}(\epsilon^{\frac{2+m}{m}}|u|^{2+m}+\frac{1}{\epsilon^{\frac{2+m}{2}}}|v|^{2+m}).
\end{split}
\end{equation}
Now choosing a sufficiently small $\epsilon>0$ in the above estimates and employing
Lemma \ref{result6} and Proposition \ref{prop1}, we infer that
\begin{equation*}
\int^{\infty}_{2}\frac{1}{t}\int_{\mathbb{R}}|u|^{2+m} w'(\frac{x}{\lambda_1(t)})g(\frac{x}{\lambda_2(t)})\, dxdt <\infty
\hskip5pt\text{and}\hskip5pt
\int^{\infty}_{2}\frac{1}{t}\int_{\mathbb{R}}|v|^{2+m} w'(\frac{x}{\lambda_1(t)})g(\frac{x}{\lambda_2(t)})\, dxdt <\infty
\end{equation*}
are equivalent, i.e.,
 \begin{equation}\label{equi1}
 \begin{split}
 \int^{\infty}_{2}\frac{1}{t}&\int_{\mathbb{R}}|u|^{2+m} w'(\frac{x}{\lambda_1(t)})g(\frac{x}{\lambda_2(t)})\, dxdt <\infty\\
 \hskip5pt\text{if and only if}\hskip40pt&\\
 \int^{\infty}_{2}\frac{1}{t}&\int_{\mathbb{R}}|v|^{2+m} w'(\frac{x}{\lambda_1(t)})g(\frac{x}{\lambda_2(t)})\, dxdt <\infty,
 \end{split}
 \end{equation}
 for any given $m\in \mathbb{R}^{+}$.

 \medskip

As a consequence of \eqref{b2}, combined with \eqref{equi1} and Lemma \ref{result6}, to prove \eqref{decay3} in Theorem \ref{th2}, we only need to show one of the following estimates
\begin{equation}\label{need1}
\int^{\infty}_{2}\frac{1}{t}\int_{\mathbb{R}}|u|^{2+\rho} w'(\frac{x}{\lambda_1(t)})g(\frac{x}{\lambda_2(t)}) d xdt <\infty,
\hskip15pt \forall \rho\in (0,1),
\end{equation}
\begin{equation}\label{need2}
\int^{\infty}_{2}\frac{1}{t}\int_{\mathbb{R}}|v|^{2+\rho} w'(\frac{x}{\lambda_1(t)})g(\frac{x}{\lambda_2(t)}) d xdt <\infty,\hskip15pt\forall \rho\in (0,1).
\end{equation}

Indeed, let us fix $\frac{1}{2}\leq \rho_1 \leq 1$. The Young inequality yields
$|v|^{\rho_1}|u|^2\lesssim(|v|^{\rho_1}|u|^{\rho_1})^{\frac{1}{\rho_1}}+|u|^{\frac{2-\rho_1}{1-\rho_1}}$ with
$\frac{2-\rho_1}{1-\rho_1}\geq 3$. It follows that
\begin{equation*}
\begin{split}
|v|^{2+\rho_1} &=2|v|^{\rho_1}(\frac{v^2}{2}-\gamma |u|^2)+2\gamma |v|^{\rho_1}|u|^2\\
& \lesssim  |v|^{\rho_1}|\frac{v^2}{2}-\gamma |u|^2|+ |uv|+|u|^{\frac{2-\rho_1}{1-\rho_1}}\\
& \lesssim  |\frac{v^2}{2}-\gamma |u|^2|+ |uv|+|u|^{3}.
\end{split}
\end{equation*}
Proposition \ref{prop1}, \eqref{b2}, and \eqref{b3} lead to
\begin{equation}\label{b4}
\int^{\infty}_{2}\frac{1}{t}\int_{\mathbb{R}}|v|^{2+\rho_1} w'(\frac{x}{\lambda_1(t)})g(\frac{x}{\lambda_2(t)}) d xdt <\infty, \hskip10pt\text{for all}\hskip5pt\frac{1}{2}\leq \rho_1\leq 1.
\end{equation}

\medskip

Now fixing $\widetilde{\rho}_1$ with $ 1-\frac{1}{1+\rho_1}\leq \widetilde{\rho}_1 < 1$. Applying once more the
Young inequality we obtain
\begin{equation*}
|v|^{\widetilde{\rho}_1}|u|^2\lesssim(|v|^{\widetilde{\rho}_1}|u|^{\widetilde{\rho}_1})^{\frac{1}{\widetilde{\rho}_1}}
+|u|^{\frac{2-\widetilde{\rho}_1}{1-\widetilde{\rho}_1}} \hskip10pt\text{for}\hskip5pt
\frac{2-\widetilde{\rho}_1}{1-\widetilde{\rho}_1}\geq 2+\rho_1,
\end{equation*}
Hence
\begin{equation*}
\begin{split}
|u|^{2+\widetilde{\rho}_1} &=-\frac{|u|^{\widetilde{\rho}_1}}{\gamma}(\frac{v^2}{2}-\gamma |u|^2)+\frac{|u|^{\widetilde{\rho}_1}|v|^2}{2\gamma} \\
& \lesssim  |\frac{v^2}{2}-\gamma |u|^2|+ |uv|+|v|^{\frac{2-\widetilde{\rho}_1}{1-\widetilde{\rho}_1}}\\
&\lesssim |\frac{v^2}{2}-\gamma |u|^2|+ |uv|+|v|^{2+\rho_1}.
\end{split}
\end{equation*}
Therefore, Proposition \ref{prop1}, \eqref{b3} and \eqref{b4} give us
\begin{equation}\label{b5}
\int^{\infty}_{1}\frac{1}{t}\int_{\mathbb{R}}|u|^{2+\widetilde{\rho}_1} w'(\frac{x}{\lambda_1(t)})
g(\frac{x}{\lambda_2(t)}) d xdt <\infty, \hskip10pt\text{for all}\hskip5pt\widetilde{\rho}_1\in[ 1-\frac{1}{1+\rho_1}, 1].\end{equation}

Repeating the previous arguments to get \eqref{b4} and \eqref{b5}, then using \eqref{equi1} for several times, for any $\rho_n\in (0,1)$ and $\widetilde{\rho}_n\in (0,1)$ with $n=2,3,4,\dots$, satisfying
\begin{equation*}
1-\frac{1}{1+\widetilde{\rho}_{n-1}} \leq  \rho_n< \rho_{n-1} \hskip10pt\text{and}\hskip10pt
1-\frac{1}{1+\rho_{n}} \leq  \widetilde{\rho}_n< \widetilde{\rho}_{n-1},
\end{equation*}
we conclude that
\begin{equation}\label{a5}
\int^{\infty}_{2}\frac{1}{t}\int_{\mathbb{R}}|v|^{2+\rho_n} w'(\frac{x}{\lambda_1(t)})g(\frac{x}{\lambda_2(t)}) d xdt <\infty, \quad \rho_n \searrow 0\quad \text{as } n\rightarrow +\infty,\footnote{We note that although $\lim_{n\rightarrow \infty}\rho_n=0$, we cannot let $\rho_n$ approximate to zero on both sides of \eqref{a5}.}
\end{equation}
and
\begin{equation*}
\int^{\infty}_{2}\frac{1}{t}\int_{\mathbb{R}}|u|^{2+\widetilde{\rho}_{n}} w'(\frac{x}{\lambda_1(t)})g(\frac{x}{\lambda_2(t)}) d xdt <\infty, \quad \widetilde{\rho}_n \searrow 0 \quad \text{as } n\rightarrow +\infty,
\end{equation*}
which implies \eqref{need1} and \eqref{need2}. Thus, we have \eqref{decay3}.
$\hfill{} \Box$

\section{Proof of Theorem \ref{th3}}

\hspace*{\parindent} We notice that \eqref{m2} does not directly imply \eqref{m4} if $\beta<0$. It means that
the argument to establish Theorem \ref{th2} does not work anymore to show Theorem \ref{th3}.
However, with Propositions \ref{prop1}, \ref{prop2}, and \ref{prop3}, and Corollary \ref{prop4} in hand, we can obtain
Theorem \ref{th3} under some \lq\lq smallness" assumption on the product of $-\beta$ and
$\Phi(\|u_0\|_{H^1(\mathbb{R})},\|v_0\|_{H^1(\mathbb{R})})$.

\medskip

\noindent\textbf{Proof of Theorem \ref{th3}}: We first recall
\eqref{1result} in Proposition \ref{prop1}, i.e.,
 \begin{align}\label{sm1result}
 &\int^{+\infty}_2\frac{1}{t}\int_{\mathbb{R}}|\frac{v^2}{2}-\gamma|u|^2|w'(\frac{x}{\lambda_1(t)})g(\frac{x}{\lambda_2(t)})dxdt\leq C(\|u_{0}\|_{H^1(\mathbb{R})},\|v_{0}\|_{H^1(\mathbb{R})}),
\end{align}
 and \eqref{J6} in Corollary \ref{prop4}, i.e.,
\begin{align}\label{smJ6}
\int^{+\infty}_2\frac{1}{t} \int_{\mathbb {R}}| u(\alpha v+\beta |u|^2)| w' \left(\frac{x}{\lambda_{1}(t)}\right)g\left(\frac{x}{\lambda_{2}(t)}\right) d xdt\leq C(\|u_0\|_{H^1(\mathbb{R})},\|v_0\|_{H^1(\mathbb{R})}).
\end{align}

We first observe that
\begin{equation*}
|\gamma|u|^4-\frac{v^2|u|^2}{2}|\leq \|u\|^2_{L^{\infty}(\mathbb{R}^{+};L^{\infty}(\mathbb{R}))} |\frac{v^2}{2}-\gamma|u|^2|\leq \|u\|^2_{C_tH^1_x}|\frac{v^2}{2}-\gamma|u|^2|,
\end{equation*}
and
\begin{equation*}
\alpha v|u|^2+\beta |u|^4| \leq \|\overline{u}\|_{_{L^{\infty}(\mathbb{R}^{+};L^{\infty}(\mathbb{R}))}} | u(\alpha v+\beta |u|^2)| \leq \|\overline{u}\|_{C_tH^1_x}| u(\alpha v+\beta |u|^2)|.
\end{equation*}
Then by Lemma \ref{result6} combined  with \eqref{sm1result} and \eqref{smJ6} it immediately follows that
\begin{equation}\label{c4}
I_1:=\int^{+\infty}_2\frac{1}{t}\int_{\mathbb{R}}|\gamma|u|^4-\frac{v^2|u|^2}{2}|w'(\frac{x}{\lambda_1(t)})g(\frac{x}{\lambda_2(t)})dxdt<C(\|u_{0}\|_{H^1(\mathbb{R})},\|v_{0}\|_{H^1(\mathbb{R})}),
\end{equation}
 and
\begin{equation}\label{c5}
I_2:=\int^{+\infty}_2 \frac{1}{t} \int_{\mathbb {R}}\big| \alpha v|u|^2+\beta |u|^4\big|w'\left(\frac{x}{\lambda_{1}(t)}\right)g\left(\frac{x}{\lambda_{2}(t)}\right) d xdt\leq C(\|u_0\|_{H^1(\mathbb{R})},\|v_0\|_{H^1(\mathbb{R})}).
\end{equation}

\medskip

 Next, operating $-\beta I_1+\gamma I_2$, we obtain from \eqref{c4} and \eqref{c5} that
\begin{equation}\label{salvo}
\begin{split}
\int^{+\infty}_2\frac{1}{t}\int_{\mathbb{R}}\Big|\alpha\gamma v|u|^2+\frac{v^2|u|^2\beta}{2}\Big| w'(\frac{x}{\lambda_1(t)})g(\frac{x}{\lambda_2(t)})dxdt
\leq &-\beta I_1+ \gamma I_2\\
\leq & \; C(\|u_0\|_{H^1(\mathbb{R})},\|v_0\|_{H^1(\mathbb{R})})
\end{split}
\end{equation}
since $\beta<0$ and $\gamma>0$.


Rewriting the first integral on the left hand side of \eqref{salvo} we have
\begin{equation}\label{c6}
\int^{+\infty}_2\int_{\mathbb{R}}\frac{1}{t}|u|^2|v||\alpha\gamma +\frac{v\beta}{2}|w'(\frac{x}{\lambda_1(t)})g(\frac{x}{\lambda_2(t)})dxdt<C(\|u_{0}\|_{H^1(\mathbb{R})},\|v_{0}\|_{H^1(\mathbb{R})}).
\end{equation}

To put forward the argument employed to prove Theorem \ref{th2} it was needed to establish the
following estimate
\begin{equation}\label{c6-b}
\int^{+\infty}_2\frac{1}{t}\int_{\mathbb{R}}|u|^2|v|w'(\frac{x}{\lambda_1(t)})g(\frac{x}{\lambda_2(t)})dxdt
<C(\|u_{0}\|_{H^1(\mathbb{R})},\|v_{0}\|_{H^1(\mathbb{R})}).
\end{equation}
This estimate allows us to obtain property \eqref{a0}, which is the key to derive the desired results.

\medskip

One way to do so is:  Given $\alpha>0$, $\gamma>0$ and $\beta<0$, choose a \lq\lq small" initial data
$(u_0,v_0)\in (H^1(\mathbb{R}))^2$ in the sense that
\begin{equation}\label{c12}
|\beta|\Phi \left(\left\|u_{0}\right\|_{H^{1}},\left\|v_{0}\right\|_{H^{1}}\right)\leq \alpha\gamma,
\end{equation}
where the constant $\Phi \left(\left\|u_{0}\right\|_{H^{1}},\left\|v_{0}\right\|_{H^{1}}\right)$ is the same as \eqref{Equ2.2} and $\underset{(x_1,x_2)\rightarrow (0,0)}{\lim} \Phi(x_1,x_2)=0$.  This implies that
\begin{equation}\label{c8}
\begin{split}
\alpha\gamma +\frac{v\beta}{2}&\geq \alpha\gamma -\frac{|\beta|\|v\|_{L^{\infty}_{x,t}}}{2}
\geq\alpha\gamma -\frac{|\beta|\|v\|_{C_tH^{1}_x}}{2}\\
&\geq \alpha\gamma -\frac{|\beta|\Phi \left(\left\|u_{0}\right\|_{H^{1}},\left\|v_{0}\right\|_{H^{1}}\right)}{2}
\geq \frac{\alpha\gamma}{2}.
\end{split}
\end{equation}

\begin{remark}
We need to emphasize that the constant $\Phi \left(\left\|u_{0}\right\|_{H^{1}},\left\|v_{0}\right\|_{H^{1}}\right)$ in \eqref{c12} also depends on the parameter vector $(\alpha,\beta,\gamma)\in \mathbb{R}^{+}\times \mathbb{R}^{-}\times \mathbb{R}^{+}$. We also note that Corcho-Linares in \cite{TAMS2007Linares} did not give the explicit dependence on the  constant $\Phi(\cdot,\cdot)$ because it was not necessary in their work.
\end{remark}

Suppose for a moment, that we have an explicit form of $\Phi \left(\left\|u_{0}\right\|_{H^{1}},\left\|v_{0}\right\|_{H^{1}}\right)$ as
\begin{equation*}
\Phi \left(\left\|u_{0}\right\|_{H^{1}},\left\|v_{0}\right\|_{H^{1}}\right)=\sqrt{2}C^{\frac{1}{2}}_{\alpha,\beta,\gamma}
(\|u_0\|_{H^1}+\|v_0\|_{H^1}+\|u_0\|^{5}_{H^1}+\|v_0\|^{5}_{H^1}).
\end{equation*}
with $C_{\alpha,\beta,\gamma}$ a constant given in \eqref{exp1} below which we will derive it in a while. Before that
we would like to present the argument to conclude the proof of Theorem \ref{th3}.

\medskip

Substituting \eqref{c8} into \eqref{c6}, it follows
\begin{equation}\label{c9}
\begin{split}
\int^{+\infty}_2\frac{1}{t}\int_{\mathbb{R}}|u|^2 |\alpha v|w'(\frac{x}{\lambda_1(t)})g(\frac{x}{\lambda_2(t)})dxdt<C(\|u_{0}\|_{H^1(\mathbb{R})},\|v_{0}\|_{H^1(\mathbb{R})}).
\end{split}
\end{equation}
Hence, combining \eqref{c5} and \eqref{c9}, we arrive at
\begin{equation}\label{c10}
\begin{split}
\int^{+\infty}_2\frac{1}{t}&\int_{\mathbb{R}}|\beta||u|^4 w'(\frac{x}{\lambda_1(t)})g(\frac{x}{\lambda_2(t)})dxdt\\
=&\int^{+\infty}_2\frac{1}{t}\int_{\mathbb{R}}||u|^2(\alpha v+\beta|u|^2)-\alpha|u|^2v |w'(\frac{x}{\lambda_1(t)})g(\frac{x}{\lambda_2(t)})dxdt\\
\leq &\int^{+\infty}_2\frac{1}{t}\int_{\mathbb{R}}||u|^2(\alpha v+\beta|u|^2)|w'(\frac{x}{\lambda_1(t)})g(\frac{x}{\lambda_2(t)})dxdt\\
&+\int^{+\infty}_2\frac{1}{t}\int_{\mathbb{R}}|u|^2|\alpha v |w'(\frac{x}{\lambda_1(t)})g(\frac{x}{\lambda_2(t)})dxdt\\
\leq&\;C(\|u_{0}\|_{H^1(\mathbb{R})},\|v_{0}\|_{H^1(\mathbb{R})}).
\end{split}
\end{equation}

Finally, substituting \eqref{c10} into \eqref{m2} and using the assumption $\theta_1>0$ and $\theta_2>0$, we arrive at the key estimate
 \begin{equation}\label{c11}
 \int^{+\infty}_{2} \frac{1}{t}\int_{\mathbb {R}}\left(|\partial_xv|^2+ |\partial_xu|^2+|u|^4\right) w'(\frac{x}{\lambda_1(t)})g(\frac{x}{\lambda_2(t)}) \,dxdt <\infty.
 \end{equation}

 We observe that the key estimate \eqref{c11} is exactly the same as in \eqref{m4}. Therefore, with \eqref{c11} at hand,  after repeating the argument from \eqref{b1} to \eqref{b5}, we can essentially complete the proof of Theorem \ref{th3}.

\medskip
From now on we derive an explicit form of $\Phi \left(\left\|u_{0}\right\|_{H^{1}},\left\|v_{0}\right\|_{H^{1}}\right)$. Indeed, using the three conserved quantities satisfy for the flow of  the Schr\"odinger-KdV system, i.e. \eqref{cons1}, \eqref{cons2}, and \eqref{cons3} combined with
the Cauchy-Schwarz inequality and the Gagliardo-Nirenberg inequality with the optimal constant $C_{GN}$, it can be directly checked that
\begin{equation}\label{apri3}
|\mathcal{M}(0)|=|\mathcal{M}(t)|=\|u_0\|^2,
\end{equation}
\begin{equation}\label{apri3-b}
|\mathcal{Q}(0)|\leq |\alpha|\|v_0\|^2+2|\gamma|\|u_0\|\|\partial_xu_0\|\leq (|\alpha|+2|\gamma|)\|u_0\|^2_{H^1},
\end{equation}
\begin{equation}\label{apri3-c}
\begin{split}
|\mathcal{E}(0)|&\leq |\alpha\gamma|\|v_0\|\|u_0\|^2_{L^4}+\frac{\alpha}{6} \|v_0\|^3+\frac{|\beta\gamma|}{2}\|u_0\|^4_{L^4}+\frac{|\alpha|}{2}\|\partial_xv_0\|^2+|\gamma|\|\partial_xu_0\|^2 \\
\leq &\; C_{GN}|\alpha\gamma|\|v_0\|\|u_0\|^{\frac{3}{2}}\|\partial_xu_0\|^{\frac{1}{2}}+\frac{|\alpha|}{6} \|v_0\|^3+\frac{|\beta\gamma|C_{GN}}{2}\|\partial_xu_0\|\|u_0\|^3\\
&+\frac{|\alpha|}{2}\|\partial_xv_0\|^2+|\gamma|\|\partial_xu_0\|^2\\
\leq &\; C_{GN}|\alpha\gamma|\|v_0\|_{H^1}\|u_0\|^2_{H^1}+\frac{|\alpha|}{6}\|v_0\|^3_{H^1}+\frac{|\beta\gamma|C_{GN}}{2}\|u_0\|^4_{H^1}+\frac{|\alpha|}{2}\|v_0\|^2_{H^1}+|\gamma|\|u_0\|^2_{H^1}\\
\leq &\; (C_{GN}|\alpha\gamma|+|\alpha|+|\gamma|) (\|u_0\|^2_{H^1}+\|v_0\|^2_{H^1}+\|v_0\|^4_{H^1}+\|u_0\|^4_{H^1})+\frac{|\beta\gamma|C_{GN}}{2}\|u_0\|^4_{H^1}.
\end{split}
\end{equation}

Moreover, from the definition of $\mathcal{Q}(t)$ and $\mathcal{M}(t)$, it can be directly checked that
\begin{align}\label{apri4}
\|v\|^2(t)&=|\frac{1}{\alpha}\mathcal{Q}(0)-\frac{2\gamma}{\alpha}\int_{\mathbb{R}}\Im (u\partial_x\overline{u})dx|
\leq \frac{1}{|\alpha|}\{|\mathcal{Q}(0)|+2|\gamma|\|u_0\|\|\partial_xu\|\}.
\end{align}
Now, we let
\begin{equation}\label{def1}
\mu:=\min\{|\gamma|,\frac{|\alpha|}{2}\}.
\end{equation}
Using the definition of $\mathcal{E}(t)$, the assumption $\alpha\gamma>0$ and the Cauchy-Schwarz inequality, it follows
\begin{align}\label{apri0}
\|\partial_xu\|^2&+\|\partial_xv\|^2\leq \frac{1}{\mu}|\{\gamma\|\partial_xu\|^2+\frac{\alpha}{2}
\|\partial_xv\|^2\}|\nonumber\\
&=\frac{1}{\mu}\big| \mathcal{E}(0)-\int_{\mathbb{R}}\big(\alpha \gamma v|u|^2-\frac{|\alpha|}{6}\,v^3
+\frac{|\beta\gamma}{2}|u|^4\big)\,dx\big|\nonumber\\
&\leq \frac{1}{\mu}\{|\mathcal{E}(0)|+|\alpha\gamma|\int_{\mathbb{R}}|v||u|^2dx+\frac{|\alpha|}{6}
\int_{\mathbb{R}}|v|^3dx+\frac{|\beta\gamma|}{2}\int_{\mathbb{R}}|u|^4dx\}\nonumber\\
&\leq \frac{1}{\mu}\{|\mathcal{E}(0)|+|\mathcal{Q}(0)|+2|\gamma|\|u_0\|\|\partial_xu\|+(|\alpha\gamma^2|+\frac{|\beta\gamma|}{2})\|u\|^4_{L^4}+\frac{|\alpha|}{6}\|v\|^3_{L^3}\}.
\end{align} Applying the Gagliardo-Nirenberg inequality with the optimal constant $C_{GN}$ and  the Young inequality for several times, for any $\epsilon_1$, $\epsilon_2$, $\epsilon_3$ and $\epsilon_4\in \mathbb{R}^{+}$, we have
\[\|u\|^4_{L^4}\leq (C_{GN}\|\partial_xu\|^{\frac{1}{4}}\|u\|^{\frac{3}{4}})^4\leq \epsilon_1\|\partial_xu\|^2+\frac{C^8_{GN}}{\epsilon_1}\|u_0\|^6,\quad \forall \epsilon_1>0.\]
Recalling the upper bound of $\|v\|(t)$, we derive the following inequalities
\begin{align*}
\|v\|^3_{L^3}&\leq (C_{GN}\|\partial_xv\|^{\frac{1}{6}}\|v\|^{\frac{5}{6}})^3
\leq \epsilon_2\|\partial_xv\|^2+\frac{C^{\frac{4}{3}}_{GN}}{\epsilon^{\frac{1}{3}}_2}\|v\|^{\frac{10}{3}}\\
&\leq
\epsilon_2\|\partial_xv\|^2+\frac{C^{\frac{4}{3}}_{GN}}{\epsilon^{\frac{1}{3}}_2|\alpha|^{\frac{5}{3}}}\{|\mathcal{Q}(0)|+2|\gamma|\|u_0\|\|\partial_xu\|\}^{\frac{5}{3}}\\
&\leq \epsilon_2\|\partial_xv\|^2+\frac{C^{\frac{4}{3}}_{GN}2^{\frac{5}{3}}|\mathcal{Q}(0)|^{\frac{5}{3}}}{\epsilon^{\frac{1}{3}}_2|\alpha|^{\frac{5}{3}}}+\frac{C^{\frac{4}{3}}_{GN}2^{\frac{5}{3}}}{\epsilon^{\frac{1}{3}}_2|\alpha|^{\frac{5}{3}}}(2|\gamma|\|u_0\|)^{\frac{5}{3}}\|\partial_xu\|^{\frac{5}{3}}\\
&\leq
\epsilon_2\|\partial_xv\|^2+\frac{C^{\frac{4}{3}}_{GN}2^{\frac{5}{3}}|\mathcal{Q}(0)|^{\frac{5}{3}}}{\epsilon^{\frac{1}{3}}_2|\alpha|^{\frac{5}{3}}}+\epsilon_3\|\partial_xu\|^2+\frac{1}{\epsilon^5_3}[\frac{C^{\frac{4}{3}}_{GN}2^{\frac{5}{3}}}{\epsilon^{\frac{1}{3}}_2|\alpha|^{\frac{5}{3}}}(2|\gamma|\|u_0\|)^{\frac{5}{3}}]^6\nonumber\\
&\leq
\epsilon_2\|\partial_xv\|^2+\frac{C^{\frac{4}{3}}_{GN}2^{\frac{5}{3}}|\mathcal{Q}(0)|^{\frac{5}{3}}}{\epsilon^{\frac{1}{3}}_2|\alpha|^{\frac{5}{3}}}+\epsilon_3\|\partial_xu\|^2+\frac{1}{\epsilon^5_3}\frac{C^{8}_{GN}2^{20}|\gamma|^{10}\|u_0\|^{10}}{\epsilon^{2}_2|\alpha|^{10}},\quad\forall \epsilon_2>0,\quad \forall\epsilon_3>0.
\end{align*}
\begin{equation*}
2\gamma \|u_0\|\|\partial_xu\|\leq \epsilon_4\|\partial_xu\|^2+\frac{4\gamma^2\|u_0\|^2}{\epsilon_4},\quad\quad\quad\forall \epsilon_4>0.
\end{equation*}
Next, plugging the estimates for $\|u\|^4_{L^4}$, $\|v\|^3_{L^3}$ and $2\gamma \|u_0\|\|\partial_xu\|$ into the
right-hand side of \eqref{apri0}, we arrive at
\begin{equation}\label{apri1}
\begin{split}
\|\partial_xu\|^2&+\|\partial_xv\|^2
\leq \frac{1}{\mu}\{|\mathcal{E}(0)|+|\mathcal{Q}(0)|+2|\gamma|\|u_0\|\|\partial_xu\|
+\big(|\alpha|\gamma^2+\frac{|\beta\gamma|}{2}\big)\|u\|^4_{L^4}\}\\
&\leq \underbrace{ \frac{|\mathcal{E}(0)|+|\mathcal{Q}(0)|+\frac{4\gamma^2\|u_0\|^2}{\epsilon_4}}{\mu}}_{A_1}+\frac{\epsilon_4\|\partial_xu\|^2}{\mu}+\frac{\epsilon_1}{\mu}(|\alpha\gamma^2|
+\frac{|\beta\gamma|}{2})\|\partial_xu\|^2\\
&\quad+\underbrace{\frac{1}{\epsilon_1\mu}(|\alpha\gamma^2|+\frac{|\beta\gamma|}{2})C^8_{GN}\|u_0\|^6}_{A_2}
+\frac{1}{\mu}\frac{|\alpha|}{6}(\epsilon_2\|\partial_xv\|^2+\epsilon_3\|\partial_xu\|^2)\\
&\quad+\underbrace{\frac{1}{\mu}\frac{|\alpha|}{6}\{\frac{C^{\frac{4}{3}}_{GN}2^{\frac{5}{3}}|\mathcal{Q}(0)|^{\frac{5}{3}}}{\epsilon^{\frac{1}{3}}_2|\alpha|^{\frac{5}{3}}}+\frac{1}{\epsilon^5_3}\frac{C^{24}_{GN}2^{20}|\gamma|^{10}\|u_0\|^{10}}{\epsilon^{2}_2|\alpha|^{10}}\}}_{A_3}, \quad \forall \epsilon_1,\epsilon_2,\epsilon_3,\epsilon_4\in \mathbb{R}^{+}.
\end{split}
\end{equation}
After choosing
\begin{equation*}\epsilon_1=\frac{\mu}{8(|\alpha\gamma^2|+\frac{|\beta\gamma|}{2})},\quad \epsilon_2=\epsilon_3=\frac{6\mu}{8|\alpha|},\quad \epsilon_4=\frac{\mu}{8},
\end{equation*}
in \eqref{apri1}, we have
\begin{align*}\|\partial_xu\|^2+\|\partial_xv\|^2
&\leq \frac{1}{2}(\|\partial_xu\|^2+\|\partial_xv\|^2)+A_1+A_2+A_3,
\end{align*}
i.e.,
\begin{align}\label{apri2}
\|\partial_xu\|^2+\|\partial_xv\|^2
&\leq 2(A_1+A_2+A_3),
\end{align}
where once again the use  of the upper bounds of $|\mathcal{M}(0)|$, $|\mathcal{Q}(0)|$ and $|\mathcal{E}(0)|$, lead to
\begin{align*}
A_1&= \frac{|\mathcal{E}(0)|+|\mathcal{Q}(0)|+\frac{4\gamma^2\|u_0\|^2}{\epsilon_4}}{\mu}\nonumber\\
&\leq \frac{(C_{GN}|\alpha\gamma|+2|\alpha|+3|\gamma|+\frac{32\gamma^2}{\mu})}{\mu} (\|u_0\|^2_{H^1}+\|v_0\|^2_{H^1}+\|v_0\|^4_{H^1}+\|u_0\|^4_{H^1})+\frac{|\beta\gamma|C_{GN}}{2\mu}\|u_0\|^4_{H^1}\\
&\leq \frac{(C_{GN}|\alpha\gamma|+2|\alpha|+3|\gamma|+\frac{32\gamma^2}{\mu}+\frac{|\beta\gamma|C_{GN}}{2})}{\mu} (\|u_0\|^2_{H^1}+\|v_0\|^2_{H^1}+\|v_0\|^{10}_{H^1}+\|u_0\|^{10}_{H^1}),
\end{align*}
\begin{equation*}
\begin{split}
A_2&=\frac{1}{\epsilon_1\mu}(|\alpha\gamma^2|+\frac{|\beta\gamma|}{2})C^8_{GN}\|u_0\|^6
\leq\frac{8(|\alpha\gamma^2|+\frac{|\beta\gamma|}{2})^2}{\mu^2}C^8_{GN}\|u_0\|^6_{H^1}\\
&\leq \frac{8(|\alpha\gamma^2|+\frac{|\beta\gamma|}{2})^2}{\mu^2}C^8_{GN}(\|u_0\|^2_{H^1}+\|v_0\|^2_{H^1}+\|v_0\|^{10}_{H^1}+\|u_0\|^{10}_{H^1}),
\end{split}
\end{equation*}
and
\begin{equation*}
\begin{split}
A_3&=\frac{1}{\mu}\frac{|\alpha|}{6}\{\frac{C^{\frac{4}{3}}_{GN}2^{\frac{5}{3}}|\mathcal{Q}(0)|^{\frac{5}{3}}}{\epsilon^{\frac{1}{3}}_2|\alpha|^{\frac{5}{3}}}+\frac{1}{\epsilon^5_3}\frac{C^{8}_{GN}2^{20}|\gamma|^{10}\|u_0\|^{10}}{\epsilon^{2}_2|\alpha|^{10}}\}\\
&\leq \frac{1}{\mu}\frac{C^{8}_{GN}2^{20}}{6\epsilon^{\frac{1}{3}}_2|\alpha|^{\frac{2}{3}}}\{|\mathcal{Q}(0)|^{\frac{5}{3}}+\frac{|\gamma|^{10}\|u_0\|^{10}}{\epsilon^{\frac{20}{3}}_2|\alpha|^{\frac{25}{3}}}\}\\
\end{split}
\end{equation*}
\begin{equation*}
\begin{split}
&\leq \frac{1}{\mu}\frac{C^{8}_{GN}2^{20}}{6(\frac{3\mu}{4})^{\frac{1}{3}}|\alpha|^{\frac{1}{3}}}\{(|\alpha|+2|\gamma|)^{\frac{5}{3}}\|u_0\|^{\frac{10}{3}}_{H^1}+\frac{|\gamma|^{10}\|u_0\|^{10}}{(\frac{3\mu}{4})^{\frac{20}{3}}|\alpha|^{\frac{5}{3}}}\}\\
&\leq \frac{C^{8}_{GN}2^{20}}{\mu^{\frac{4}{3}}|\alpha|^{\frac{1}{3}}}\{(|\alpha|+2|\gamma|)^{\frac{5}{3}}\|u_0\|^{\frac{10}{3}}_{H^1}+\frac{|\gamma|^{10}\|u_0\|^{10}}{\mu^{\frac{20}{3}}|\alpha|^{\frac{5}{3}}}\}\\
&\leq \frac{C^{8}_{GN}2^{20}}{\mu^{\frac{4}{3}}|\alpha|^{\frac{1}{3}}}\left[(|\alpha|+2|\gamma|)^{\frac{5}{3}}+\frac{|\gamma|^{10}}{\mu^{\frac{20}{3}}|\alpha|^{\frac{5}{3}}}\right](\|u_0\|^2_{H^1}+\|v_0\|^2_{H^1}+\|v_0\|^{10}_{H^1}+\|u_0\|^{10}_{H^1}).
\end{split}
\end{equation*}

Recalling \eqref{apri3} and \eqref{apri4},
we have
\begin{align*}
\|u\|^2(t)+\|v\|^2(t)&\leq \|u_0\|^2+\frac{1}{|\alpha|}\{|\mathcal{Q}(0)|+2|\gamma|\|u_0\|\|\partial_xu\|\}\nonumber\\
&\leq (2+\frac{2|\gamma|}{|\alpha|}+\frac{8\gamma^2}{|\alpha|^2})\|u_0\|^2_{H^1}+\frac12\|\partial_xu\|^2(t),
\hskip10pt \forall t\geq 0.\end{align*}
 Applying $\frac{2|\gamma|}{\alpha}\|u_0\|\|\partial_xu\|\leq \frac{\|\partial_xu\|^2}{2}+\frac{8r^2}{\alpha^2}\|u_0\|^2$ and \eqref{apri3-b}, it follows
\begin{equation}\label{apri5}
\|u\|^2(t)+\|v\|^2(t)-\frac12\|\partial_xu\|^2(t)\leq (2+\frac{2|\gamma|}{|\alpha|}+\frac{8\gamma^2}{|\alpha|^2})
(\|u_0\|^2_{H^1}+\|v_0\|^2_{H^1}),\hskip10pt \forall t\geq 0.
\end{equation}

Now, combining \eqref{apri2}, \eqref{apri5} and the definition of $\mu=\min\{|\gamma|,\frac{|\alpha|}{2}\}$, we get
\begin{equation}\label{equ2}
\|u\|^2(t)+\|v\|^2(t)+\frac{3}{2}\|\partial_xu\|^2(t)+2\|\partial_xv\|^2(t)\leq \widetilde{\Phi} \left(\left\|u_{0}\right\|_{H^{1}},\left\|v_{0}\right\|_{H^{1}}\right), \quad\quad \forall t\in \mathbb{R}^{+},
\end{equation}
where
\begin{equation*}
\widetilde{\Phi} \left(\left\|u_{0}\right\|_{H^{1}},\left\|v_{0}\right\|_{H^{1}}\right)=
C_{\alpha,\beta,\gamma} (\|u_0\|^2_{H^1}+\|v_0\|^2_{H^1}+\|u_0\|^{10}_{H^1}+\|v_0\|^{10}_{H^1})
\end{equation*}
 with
 \begin{equation}\label{exp1}
 \begin{split}
 C_{\alpha,\beta,\gamma}=&(2+\frac{2|\gamma|}{|\alpha|}+\frac{8\gamma^2}{|\alpha|^2})+\frac{4(C_{GN}|\alpha\gamma|+2|\alpha|+3|\gamma|+\frac{32\gamma^2}{\min\{|\gamma|,\frac{|\alpha|}{2}\}})+2|\beta\gamma|C_{GN}}{\min\{|\gamma|,\frac{|\alpha|}{2}\}}\\
 &+\frac{32(|\alpha\gamma^2|+\frac{|\beta\gamma|}{2})^2}{(\min\{|\gamma|,\frac{|\alpha|}{2}\})^2}C^8_{GN}\\
 &+\frac{C^{24}_{GN}2^{22}}{(\min\{|\gamma|,\frac{|\alpha|}{2}\})^{\frac{4}{3}}|\alpha|^{\frac{1}{3}}}\left[(|\alpha|+2|\gamma|)^{\frac{5}{3}}+\frac{|\gamma|^{10}}{(\min\{|\gamma|,\frac{|\alpha|}{2}\})^{\frac{20}{3}}|\alpha|^{\frac{5}{3}}}\right].\end{split}
 \end{equation}

Therefore, in \eqref{Equ2.2}, we have an explicit form of $\Phi \left(\left\|u_{0}\right\|_{H^{1}},\left\|v_{0}\right\|_{H^{1}}\right)$ as
\begin{equation}\label{phi}
\Phi \left(\left\|u_{0}\right\|_{H^{1}},\left\|v_{0}\right\|_{H^{1}}\right)=C^{\frac{1}{2}}_{\alpha,\beta,\gamma}
(\|u_0\|_{H^1}+\|v_0\|_{H^1}+\|u_0\|^{5}_{H^1}+\|v_0\|^{5}_{H^1}).
\end{equation}
$\hfill{} \Box$\\

\section{Acknowledgments}
 The first author was partially supported by CNPq grant 310329/2023-0 and FAPERJ grant E-26/200.465/2023.
 The second author is thankful to the Department of Applied Mathematics as well as  the Research
Center for Nonlinear Analysis at Hong Kong Polytechnic University for pleasant hospitality where part of this work was developed. The second author is  supported by NSFC (Grant No. 11901067), the Fundamental Research Funds for the Central Universities (Grant No. 2024IAIS-ZX001), and Key Laboratory of Nonlinear Analysis and its Applications (Chongqing University), Ministry of Education. The authors would like to thank A. Mendez for fruitful conversations concerning this work.

\end{document}